\theoremstyle{plain} \numberwithin{equation}{section}
\newtheorem{theorem}{Theorem}[section]
\newtheorem{proposition}[theorem]{Proposition}
\newtheorem{lemma}[theorem]{Lemma}
\theoremstyle{definition}
\newtheorem*{definition}{Definition}
\newtheorem{example}{Example}[section]
\newtheorem*{remark}{Remark}
\newtheorem*{con}{Construction}
\def\Z{\mathbb Z}
\def\C{\mathbb C}
\def\R{\mathbb R}
\def\Q{\mathbb Q}
\def\F{\mathcal F}
\DeclareMathOperator{\rank}{rank} 
\DeclareMathOperator{\Int}{Int}
\DeclareMathOperator{\codim}{codim}
\DeclareMathOperator{\Hom}{Hom}
\DeclareMathOperator{\coker}{coker}
 \DeclareMathOperator{\im}{im}
\DeclareMathOperator{\id}{id}
\newcommand{\tM}{\tilde M}
\newcommand{\PsiE}{\Psi_E}
\newcommand{\PsiV}{\Psi_V}
\newcommand{\Sing}{W}
\newcommand{\Dd}{\mathcal{D}}
\newcommand{\Ff}{\mathcal{F}}
\newcommand{\ta}{\mathfrak{t}}
\newcommand{\jh}{\widehat{j}}
\newcommand{\inc}{\kappa}
\def\P{\mathcal P}
\def\kr{\mathcal R}
\begin{document}
\title{Cohomology of toric origami manifolds with acyclic proper faces}

\author[A. Ayzenberg]{Anton Ayzenberg}
\address{Department of Mathematics, Osaka City University, Sumiyoshi-ku, Osaka 558-8585, Japan.}
\curraddr{Faculty of Mathematics, National Research University Higher School of Economics, Moscow 117312, Russia}
\email{ayzenberga@gmail.com}

\author[M. Masuda]{Mikiya Masuda}
\address{Department of Mathematics, Osaka City University, Sumiyoshi-ku, Osaka 558-8585, Japan.}
\email{masuda@sci.osaka-cu.ac.jp}

\author[S. Park]{Seonjeong Park}
\address{Division of Mathematical Models, National Institute for Mathematical Sciences, 463-1 Jeonmin-dong, Yuseong-gu, Daejeon 305-811, Korea}
\email{seonjeong1124@nims.re.kr}

\author[H. Zeng]{Haozhi Zeng}
\address{Department of Mathematics, Osaka City University, Sumiyoshi-ku, Osaka 558-8585, Japan.}
\email{zenghaozhi@icloud.com}


\date{\today}
\thanks{The first author is supported by the JSPS postdoctoral fellowship program. The second author was partially supported by Grant-in-Aid for Scientific Research 25400095}
\subjclass[2000]{Primary 57S15, 53D20; Secondary 14M25, 57R91,
55N91} \keywords{toric origami manifold, origami template, Delzant
polytope, symplectic toric manifold, equivariant cohomology, face
ring, simplicial poset}

\begin{abstract}
A toric origami manifold is a generalization of a symplectic toric
manifold (or a toric symplectic manifold). The origami symplectic form is allowed to degenerate in a good
controllable way in contrast to the
usual symplectic form. It is widely known that symplectic toric
manifolds are encoded by Delzant polytopes, and the cohomology and
equivariant cohomology rings of a symplectic toric manifold can be
described in terms of the corresponding polytope. Recently, Holm and Pires described the cohomology of a toric origami manifold
$M$ in terms of the orbit space $M/T$ when $M$ is orientable and
the orbit space $M/T$ is contractible. But in general the orbit space of a toric
origami manifold need not be contractible. In this paper we study
the topology of orientable toric origami manifolds for the wider
class of examples: we require that every proper face of the orbit
space is acyclic, while the orbit space itself may be arbitrary.
Furthermore, we give a general description of the equivariant
cohomology ring of torus manifolds with locally standard torus
actions in the case when proper faces of the orbit space are
acyclic and the free part of the action is a trivial torus bundle.
\end{abstract}

\maketitle

\section{Introduction}

A symplectic toric manifold is a compact
connected symplectic manifold of dimension $2n$ with an effective
Hamiltonian action of a compact $n$-dimensional torus $T$. A
famous result of Delzant \cite{de88} describes a bijective
correspondence between symplectic toric manifolds and simple
convex polytopes, called Delzant polytopes. The polytope
associated to a symplectic toric manifold $M$ is the image of the
moment map on $M$.

Origami manifolds appeared in differential geometry  recently as a
generalization of symplectic manifolds \cite{ca-gu-pi11}.
A folded symplectic form on a $2n$-dimensional manifold $M$ is a
closed $2$-form $\omega$ whose top power $\omega^n$ vanishes
transversally on a subset $\Sing$ and whose restriction to points
in $\Sing$ has maximal rank. Then $\Sing$ is a codimension-one
submanifold of $M$, called the fold. The maximality of the
restriction of $\omega$ to $\Sing$ implies the existence of a line
field on $\Sing$. If the
line field is the vertical bundle of some principal
$S^1$-fibration $\Sing\to X$, then $\omega$ is called an \emph{origami form}.

Toric origami manifolds are generalizations of symplectic toric manifolds.
The notions of a Hamiltonian action
and a moment map are defined similarly to the symplectic case, and
a toric origami manifold is defined to be a compact connected
origami manifold $(M^{2n}, \omega)$ equipped with an effective
Hamiltonian action of a torus $T$. Similarly to Delzant's theorem
for symplectic toric manifolds, toric origami manifolds
bijectively correspond to special combinatorial structures, called
origami templates, via moment maps \cite{ca-gu-pi11}. An origami
template is a collection of Delzant polytopes with some additional
gluing data encoded by a template graph $G$.

A problem of certain interest is to describe the cohomology ring and $T$-equivariant cohomology
ring of toric origami manifold $M$ in terms of the corresponding
origami template, see \cite{ca-gu-pi11} and \cite{ho-pi12}. In this paper
we present some partial results concerning this problem.

While in general toric origami manifolds can be non-orientable, in
this paper we restrict to the orientable case. Under this
assumption the action of $T$ on a toric origami manifold $M$ is
locally standard, so the orbit space $M/T$ is a manifold with
corners. One can describe the orbit space $M/T$ as a result of
gluing polytopes of the origami template. This shows that $M/T$ is
homotopy equivalent to the template graph $G$. Every proper face of
$M/T$ is homotopy equivalent to some subgraph of $G$. Thus a toric
origami manifold has the property that the orbit space and all its
faces are homotopy equivalent to wedges of circles or
contractible.

If $G$ is a tree, then $M/T$ and all its faces are contractible.
Hence, a general result of \cite{ma-pa06} applies. It gives a
description similar to toric varieties (or quasitoric manifolds):
$H^*_T(M)\cong \Z[M/T]$ and $H^*(M)\cong
\Z[M/T]/(\theta_1,\ldots,\theta_n)$. Here, $\Z[M/T]$ is the face ring
of the manifold with corners $M/T$, and
$(\theta_1,\ldots,\theta_n)$ is the ideal generated by the linear
system of parameters, defined by the characteristic map on $M/T$.
This case is discussed in detail in \cite{ho-pi12}. But, if $G$ has
cycles, even the Betti numbers of $M$ remain unknown in general. Only when $M$ is of dimension $4$, Holm and Pires described the Betti numbers of $M$ in \cite{ho-pi13}.

In this paper we study the cohomology of an orientable toric
origami manifold $M$ in the case when $M/T$ is itself arbitrary,
but every proper face of $M/T$ is acyclic (this assumption is not always satisfied, see Section~\ref{sectNonAcyclicFaces}). A different approach to
this task, based on the spectral sequence of the filtration by
orbit types, is proposed in a more general situation in
\cite{ayze14}. For toric origami manifolds, the calculation of
Betti numbers in this paper gives the same answer but a simpler
proof.

The paper is organized as follows. Section
\ref{sectToricOrigami} contains necessary definitions and
properties of toric origami manifolds and origami templates. In
Section \ref{sectBettiNumbers} we describe the procedure which
simplifies a given toric origami manifold step-by-step, and give
an inductive formula for Betti numbers. Section
\ref{sectBettiFace} provides more convenient formulas expressing
Betti numbers of $M$ in terms of the first Betti number of $M/T$
and the face numbers of the dual simplicial poset. Section
\ref{sectEquivar} is devoted to an equivariant cohomology. While
toric origami manifolds serve as a motivating example, we describe
the equivariant cohomology ring in a more general setting. In
Section \ref{sectSerreSpec}, we describe the properties of the
Serre spectral sequence of the fibration $\pi\colon ET\times_TM\to
BT$ for a toric origami manifold $M$. The restriction homomorphism
$\iota^*\colon H^*_T(M)\to H^*(M)$ induces a graded ring
homomorphism $\bar\iota^*\colon H^*_T(M)/(\pi^*(H^{2}(BT)))\to
H^*(M)$. In Section \ref{sectTowardsRing} we use Schenzel's
theorem and the calculations of previous sections to show that
$\bar\iota^*$ is an isomorphism except in degrees $2$, $4$ and
$2n-1$, a monomorphism in degrees $2$ and $2n-1$, and an epimorphism in
degree $4$; we also find the ranks of the kernels and cokernels in these exceptional degrees. Since $\bar\iota^*$ is a ring
homomorphism, these considerations describe the product structure
on the most part of $H^*(M)$, except for the cokernel of
$\bar\iota^*$ in degree $2$. Section \ref{sect4dimCase}
illustrates our considerations in the $4$-dimensional case. In
section \ref{sectCokernel} we give a geometrical description of
the cokernel of $\bar\iota^*$ in degree 2, and suggest a partial
description of the cohomology multiplication for these extra
elements. The discussion of Section~\ref{sectNonAcyclicFaces}
shows which part of the results can be generalized to the case of
non-acyclic faces.

\section{Toric origami manifolds}\label{sectToricOrigami}

In this section, we recall the definitions and properties of toric
origami manifolds and origami templates. Details can be found in
\cite{ca-gu-pi11}, \cite{ma-pa13} or \cite{ho-pi12}.

A folded symplectic form on a $2n$-dimensional manifold $M$ is a
closed 2-form $\omega$ whose top power $\omega^n$ vanishes
transversally on a subset $\Sing$ and whose restriction to points
in $\Sing$ has maximal rank. Then $\Sing$ is a codimension-one
submanifold of $M$ and is called the fold. If $\Sing$ is empty,
$\omega$ is a genuine symplectic form. The pair $(M, \omega)$ is
called a folded symplectic manifold. Since the restriction of
$\omega$ to $\Sing$ has maximal rank, it has a one-dimensional
kernel at each point of $\Sing$. This determines a line field on
$\Sing$ called the null foliation. If the null foliation is the
vertical bundle of some principal $S^1$-fibration $\Sing\to X$
over a compact base $X$, then the folded symplectic form $\omega$
is called an origami form and the pair $(M,\omega)$ is called an
origami manifold. The action of a torus $T$ on an origami manifold
$(M,\omega)$ is Hamiltonian if it admits a moment map $\mu\colon
M\to \ta^*$ to the dual Lie algebra of the torus, which satisfies
the conditions: (1) $\mu$ is equivariant with respect to the given
action of $T$ on $M$ and the coadjoint action of $T$ on the vector
space $\ta^*$ (this action is trivial for the torus); (2) $\mu$
collects Hamiltonian functions, that is, $d\langle\mu,V\rangle =
\imath_{V^\#}\omega$ for each $V\in \ta$, where $V^\#$ is the
vector field on $M$ generated by $V$.

\begin{definition}
A toric origami manifold $(M,\omega,T,\mu)$, abbreviated as $M$,
is a compact connected origami manifold $(M,\omega)$ equipped with
an effective Hamiltonian action of a torus $T$ with $\dim T =
\frac12\dim M$ and with a choice of a corresponding moment map
$\mu$.
\end{definition}

When the fold $\Sing$ is empty, a toric origami manifold is a
symplectic toric manifold. A theorem of Delzant \cite{de88} says
that symplectic toric manifolds are classified by their moment
images called Delzant polytopes. Recall that a Delzant polytope in
$\R^n$ is a simple convex polytope, whose normal fan is smooth
(with respect to some given lattice $\Z^n\subset\R^n$). In other
words, all normal vectors to the facets of $P$ have rational
coordinates, and the primitive normal vectors
$\nu(F_1),\ldots,\nu(F_n)$ form a basis of the lattice $\Z^n$ whenever facets $F_1,\ldots,F_n$ meet in a
vertex of $P$. Let
$\Dd_n$ denote the set of all Delzant polytopes in $\R^n$ (w.r.t.
a given lattice) and $\Ff_n$ be the set of all their facets.

The moment data of a toric origami manifold can be encoded into an
origami template $(G,\PsiV,\PsiE)$, where
\begin{itemize}
\item $G$ is a connected graph (loops and multiple edges are allowed) with the
vertex set $V$ and edge set $E$;
\item $\PsiV\colon V\to \Dd_n$;
\item $\PsiE\colon E\to \Ff_n$;
\end{itemize}
subject to the following conditions:
\begin{itemize}
\item If $e\in E$ is an edge of $G$ with endpoints $v_1, v_2\in
V$, then $\PsiE(e)$ is a facet of both polytopes $\PsiV(v_1)$ and
$\PsiV(v_2)$, and these polytopes coincide near $\PsiE(e)$ (this
means there exists an open neighborhood $U$ of $\PsiE(e)$ in
$\R^n$ such that $U\cap\PsiV(v_1)=U\cap\PsiV(v_2)$).
\item If $e_1,e_2\in E$ are two edges of $G$ adjacent to $v\in V$,
then $\PsiE(e_1)$ and $\PsiE(e_2)$ are disjoint facets of
$\Psi(v)$.
\end{itemize}

The facets of the form $\PsiE(e)$ for $e\in E$ are called the fold
facets of the origami template.

The following is a generalization of the theorem by Delzant to
toric origami manifolds.

\begin{theorem}[\cite{ca-gu-pi11}]\label{theo:classifOrigami}
Assigning the moment data of a toric origami manifold induces a
one-to-one correspondence
\[
\{\mbox{toric origami
manifolds}\}\leftrightsquigarrow\{\mbox{origami templates}\}
\]
up to equivariant origami symplectomorphism on the left-hand side,
and affine equivalence on the right-hand side.
\end{theorem}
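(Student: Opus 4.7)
The plan is to construct maps in both directions between toric origami manifolds (up to equivariant origami symplectomorphism) and origami templates (up to affine equivalence), and to verify they are mutually inverse. The forward direction produces a template from the moment data; the backward direction builds a manifold by gluing Delzant pieces along prescribed facets. Delzant's classical theorem handles everything away from the fold, so the novelty is concentrated near $\Sing$.

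For the forward direction, let $(M,\omega,T,\mu)$ be a toric origami manifold. Since the $T$-action preserves $\omega$, it preserves $\Sing$ and the null foliation, so the principal $S^1$-fibration $\Sing\to X$ is $T$-equivariant. On each connected component of $M\setminus\Sing$ the form $\omega$ is genuinely symplectic and the $T$-action is Hamiltonian, so Delzant's theorem assigns to it a Delzant polytope; these will be the images of $\PsiV$ on the vertex set $V:=\pi_0(M\setminus\Sing)$. The connected components of $\Sing$ form the edge set $E$: each such component is adjacent to one or two components of $M\setminus\Sing$ (a loop of $G$ when just one), and its image under $\mu$ is a common facet of the corresponding polytopes, which defines $\PsiE$. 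To make this precise near $\Sing$ one needs an equivariant tubular neighborhood together with an equivariant Moser-type argument for folded forms, giving a canonical local model in which $\mu$ folds across a facet; from this local model the two axioms of a template follow, since the two polytopes agree in a neighborhood of $\PsiE(e)$ and the $S^1$-stabilizers at distinct folds meeting the same vertex are distinct primitive lattice vectors, hence the corresponding facets are disjoint.

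For the reverse direction, given $(G,\PsiV,\PsiE)$, I would apply Delzant's construction vertex-by-vertex to produce symplectic toric manifolds $(M_v,\omega_v,T,\mu_v)$. For each edge $e$ with endpoints $v_1,v_2$, the two polytopes coincide in a neighborhood $U$ of $\PsiE(e)$, so by the uniqueness half of Delzant the preimages $\mu_{v_i}^{-1}(U)$ are canonically equivariantly symplectomorphic. I would then glue $M_{v_1}$ and $M_{v_2}$ along $\mu_{v_i}^{-1}(\PsiE(e))$ via the unique equivariant diffeomorphism covering the identity on $\PsiE(e)$ and inducing the antipodal map on the circle fibers of the $S^1$-bundle $\mu_{v_i}^{-1}(\PsiE(e))\to\PsiE(e)$; the two symplectic forms then fit together into a smooth closed two-form which degenerates transversally along the glued hypersurface and whose null foliation is the chosen $S^1$-bundle. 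The template's disjointness axiom guarantees that the gluings at a single vertex are performed along disjoint submanifolds. Assembling the $\mu_v$ gives a global moment map recovering the template.

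The main obstacle, as usual in such classifications, is the gluing step and the associated uniqueness. One must show the resulting two-form is smooth, that the fold condition ($\omega^n$ vanishes transversally) holds globally, and that any other equivariant choice of gluing diffeomorphism yields an equivariantly origami-symplectomorphic manifold. Both points rest on an equivariant local normal form for Hamiltonian toric actions near an origami fold, analogous to the equivariant Darboux theorem in the symplectic setting; once this local model is in hand, the two constructions are manifestly inverse up to the stated equivalences.
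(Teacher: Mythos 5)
The paper does not actually prove this statement; it is quoted from \cite{ca-gu-pi11}, so your proposal has to be measured against the argument there (symplectic cutting / radial blow-up, Delzant's theorem on the cut pieces, and an equivariant Moser-type normal form near the fold). Your outline follows that strategy in spirit, but several steps as written contain genuine gaps. In the forward direction you apply Delzant's theorem to the connected components of $M\setminus\Sing$, which are open, noncompact symplectic manifolds; Delzant's theorem classifies \emph{compact} symplectic toric manifolds, so you first need the cutting operation (exactly the one recalled in Section 3 of this paper: excise $Z\times(-1,1)$ and close up with the disk bundles of $Z\to B$) to produce closed symplectic toric manifolds whose moment images are the template polytopes. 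Your verification of the second template axiom is also not correct as stated: distinct primitive normal vectors do not make two facets disjoint (adjacent facets of any Delzant polytope have distinct primitive normals); the disjointness of fold facets at a common vertex has to come from the fact that the folds are disjoint embedded hypersurfaces and from the one-sided folding behaviour of $\mu$ in the normal form near each fold.

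In the reverse direction the gluing you describe is not a well-defined operation: inside the closed toric manifold $M_{v_i}$ the set $\mu_{v_i}^{-1}(\PsiE(e))$ is the codimension-two symplectic toric submanifold $B$, and $B\to\PsiE(e)$ is \emph{not} an $S^1$-bundle (its generic fiber is $T^{n-1}$); one cannot glue two $2n$-manifolds "along" codimension-two submanifolds by an antipodal map on circle fibers. The correct construction is the radial blow-up: remove invariant open disk-bundle neighborhoods of $B_\pm$ and identify the boundary circle bundles $Z_\pm$ through $Z\times[-1,1]$, the fold $Z$ being an $S^1$-bundle over $B$, not over the facet. Finally, everything that makes the theorem true — smoothness of the glued form, transversal vanishing of $\omega^n$ along the fold, independence of the choices of tubular neighborhoods and gluing maps, and injectivity of the correspondence (two origami manifolds with the same template are equivariantly origami-symplectomorphic) — is delegated to an "equivariant local normal form near the fold" that you assert but do not prove; that normal form and the accompanying Moser argument are precisely the core of the proof in \cite{ca-gu-pi11}, so as it stands the proposal is a plausible plan rather than a proof.
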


Denote by $|(G,\PsiV,\PsiE)|$ the topological space constructed
from the disjoint union $\bigsqcup_{v\in V}\PsiV(v)$ by
identifying facets $\PsiE(e)\subset\PsiV(v_1)$ and
$\PsiE(e)\subset\PsiV(v_2)$ for every edge $e\in E$ with endpoints
$v_1$ and $v_2$.

An origami template $(G,\PsiV,\PsiE)$ is called co\"{o}rientable
if the graph $G$ has no loops (this means all edges have different
endpoints). Then the corresponding toric origami manifold is also
called co\"{o}rientable. If $M$ is orientable, then $M$ is
co\"{o}rientable \cite{ho-pi12}. If $M$ is co\"{o}rientable, then
the action of $T^n$ on $M$ is locally standard \cite[Lemma
5.1]{ho-pi12}. We review the definition of locally standard action
in Section \ref{sectEquivar}.

Let $(G,\PsiV,\PsiE)$ be an origami template and $M$ the
associated toric origami manifold which is supposed to be
orientable in the following. The topological space
$|(G,\PsiV,\PsiE)|$ is a manifold with corners with the face
structure induced from the face structures on polytopes
$\PsiV(v)$, and the space $|(G,\PsiV,\PsiE)|$ is homeomorphic to $M/T$ as a
manifold with corners. The space $|(G,\PsiV,\PsiE)|$ has the same
homotopy type as the graph $G$, thus $M/T\cong |(G,\PsiV,\PsiE)|$
is either contractible or homotopy equivalent to a wedge of
circles.

Under the correspondence of Theorem \ref{theo:classifOrigami}, the
fold facets of the origami template correspond to the connected
components of the fold $\Sing$ of $M$. If $F=\PsiE(e)$ is a fold
facet of the template $(G,\PsiV,\PsiE)$, then the corresponding
component $Z\subseteq\mu^{-1}(F)$ of the fold $\Sing\subset M$ is a
principal $S^1$-bundle over a compact space $B$. The space $B$ is
a $(2n-2)$-dimensional symplectic toric manifold corresponding to
the Delzant polytope $F$. In the following we also call the
connected components $Z$ of the fold $\Sing$ the ``folds'' by
abuse of terminology.

\section{Betti numbers of toric origami manifolds} \label{sectBettiNumbers}

Let $M$ be an orientable toric origami manifold of dimension $2n$
with a fold $Z$. Let $F$ be the corresponding folded facet in the
origami template of $M$ and let $B$ be the symplectic toric
manifold corresponding to $F$.  The normal line bundle of $Z$ to
$M$ is trivial so that an invariant closed tubular neighborhood of
$Z$ in $M$ can be identified with $Z\times [-1,1]$.  We set
\[
\tM:=M-\Int(Z\times [-1,1]).
\]
This has two boundary components which are copies of $Z$.  We
close $\tM$ by gluing two copies of the disk bundle associated to
the principal $S^1$-bundle $Z\to B$ along their boundaries.  The
resulting closed manifold (possibly disconnected), denoted $M'$,
is again a toric origami manifold and the graph associated to $M'$ is the graph associated to $M$ with the edge corresponding to the folded facet $F$ removed.

Let $G$ be the graph associated to the origami template of $M$ and
let $b_1(G)$ be its first Betti number. We assume that $b_1(G)\ge
1$. A folded facet in the origami template of $M$ corresponds to
an edge of $G$. We choose an edge $e$ in a (non-trivial) cycle of
$G$ and let $F$, $Z$ and $B$ be respectively the folded facet, the
fold and the symplectic toric manifold corresponding to the edge
$e$. Then $M'$ is connected and since the graph $G'$ associated to $M'$
is the graph $G$ with the edge $e$ removed, we have
$b_1(G')=b_1(G)-1$.

Two copies of $B$ lie in $M'$ as closed submanifolds, denoted
$B_+$ and $B_-$.  Let $N_+$ (resp. $N_-$) be an invariant closed
tubular neighborhood of $B_+$ (resp. $B_-$) and $Z_+$ (resp.
$Z_-$) be the boundary of $N_+$ (resp. $N_-$). Note that
$M'-\Int(N_+\cup N_-)$ can naturally be identified with $\tM$, so
that
\[
\tM=M'-\Int(N_+\cup N_-)=M-\Int(Z\times [-1,1])
\]
and
\begin{alignat}{2}
M'&=\tM\cup(N_+\cup N_-),\hspace{1cm} &\tM\cap(N_+\cup N_-)=Z_+\cup Z_-,\label{eq:3-1-1} \\
M&=\tM\cup(Z\times[-1,1]),\hspace{1cm} &\tM\cap(Z\times[-1,1])=Z_+\cup Z_-. \label{eq:3-1-2}
\end{alignat}

\begin{remark} It follows from \eqref{eq:3-1-1} and \eqref{eq:3-1-2} that
\[
\chi(M')=\chi(\tM)+2\chi(B),\quad \chi(M)=\chi(\tM)
\]
and hence $\chi(M')=\chi(M)+2\chi(B)$.  Note that this formula
holds without the acyclicity assumption (made later) on proper
faces of $M/T$.
\end{remark}

We shall investigate relations among the Betti numbers of $M, M',
\tM, Z$, and $B$.  The spaces $\tM$ and $Z$ are auxiliary ones and
our aim is to find relations among the Betti numbers of $M, M'$
and $B$. In the following, all cohomology groups and Betti numbers
are taken with $\Z$ coefficients unless otherwise stated but the
reader will find that the same argument works over any field.

\begin{lemma} \label{lemm:3-1}
The Betti numbers of $Z$ and $B$ have the relation
$$b_{2i}(Z)-b_{2i-1}(Z)=b_{2i}(B)-b_{2i-2}(B)$$ for every~$i$.
\end{lemma}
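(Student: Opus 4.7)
The plan is to exploit the Gysin sequence of the principal $S^1$-bundle $Z \to B$, using the well-known fact that the symplectic toric manifold $B$ has cohomology concentrated in even degrees and is torsion-free.

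Let $e \in H^2(B)$ denote the Euler class of the bundle. The Gysin sequence reads
\begin{equation*}
\cdots \to H^{k-2}(B) \xrightarrow{\cup e} H^{k}(B) \xrightarrow{\pi^*} H^{k}(Z) \to H^{k-1}(B) \xrightarrow{\cup e} H^{k+1}(B) \to \cdots
\end{equation*}
Since $B$ is a symplectic toric manifold, $H^{\mathrm{odd}}(B) = 0$. Taking $k = 2i$, the terms $H^{2i-1}(B)$ and $H^{2i+1}(B)$ vanish, so the sequence yields
\begin{equation*}
H^{2i}(Z) \cong \coker\bigl(H^{2i-2}(B) \xrightarrow{\cup e} H^{2i}(B)\bigr).
\end{equation*}
Taking $k = 2i-1$, the terms $H^{2i-3}(B)$ and $H^{2i-1}(B)$ vanish, giving
\begin{equation*}
H^{2i-1}(Z) \cong \ker\bigl(H^{2i-2}(B) \xrightarrow{\cup e} H^{2i}(B)\bigr).
\end{equation*}

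For any homomorphism $\phi \colon V \to W$ of finitely generated abelian groups, the elementary rank identity
\begin{equation*}
\rank(\coker \phi) - \rank(\ker \phi) = \rank W - \rank V
\end{equation*}
applied to $\phi = \cup e$ gives exactly
\begin{equation*}
b_{2i}(Z) - b_{2i-1}(Z) = b_{2i}(B) - b_{2i-2}(B),
\end{equation*}
as required. There is no real obstacle here; the only input beyond the Gysin sequence is the vanishing of the odd cohomology of $B$, which is standard for symplectic toric manifolds (they admit a Morse--Bott function with only even-index critical manifolds coming from the moment map). The same argument works verbatim over any field since the rank identity is purely linear-algebraic.
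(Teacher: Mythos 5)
Your proof is correct and follows essentially the same route as the paper: the paper also uses the Gysin sequence of the circle bundle $Z\to B$ together with $H^{\mathrm{odd}}(B)=0$, obtaining the four-term exact sequence $0\to H^{2i-1}(Z)\to H^{2i-2}(B)\to H^{2i}(B)\to H^{2i}(Z)\to 0$, which is just your kernel/cokernel identification packaged in one sequence. Taking alternating ranks there gives the same identity, so your argument matches the paper's.
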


\begin{proof}
Since $\pi\colon Z\to B$ is a principal $S^1$-bundle and
$H^{odd}(B)=0$, the Gysin exact sequence for the principal
$S^1$-bundle splits into a short exact sequence
\begin{equation} \label{eq:3-4}
0\to H^{2i-1}(Z)\to H^{2i-2}(B)\to H^{2i}(B)\xrightarrow{\pi^*} H^{2i}(Z)\to 0\quad \text{for every $i$}
\end{equation}
and this implies the lemma.
\end{proof}

\begin{lemma} \label{lemm:3-4}
The Betti numbers of $\tilde{M}$, $M'$, and $B$ have the relation
$$b_{2i}(\tM)-b_{2i-1}(\tM)=b_{2i}(M')-b_{2i-1}(M')-2b_{2i-2}(B)$$ for every $i$.
\end{lemma}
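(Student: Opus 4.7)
The plan is to apply Mayer--Vietoris to the decomposition $M'=\tM\cup(N_+\cup N_-)$ with intersection $Z_+\sqcup Z_-$, and extract the stated formula by counting dimensions. Since $N_\pm$ deformation retracts onto $B_\pm\cong B$ and $Z_\pm\cong Z$, the MV sequence becomes
\[
\cdots\to H^{k-1}(Z)^{\oplus 2}\xrightarrow{\delta}H^k(M')\to H^k(\tM)\oplus H^k(B)^{\oplus 2}\to H^k(Z)^{\oplus 2}\xrightarrow{\delta}H^{k+1}(M')\to\cdots
\]
where the middle map is the difference of restrictions.

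The key observation is that the connecting maps $\delta$ vanish in even degrees. Indeed, $B$ is a symplectic toric manifold so $H^{\mathrm{odd}}(B)=0$, and the Gysin sequence of the principal $S^1$-bundle $Z\to B$ (already used to prove Lemma~\ref{lemm:3-1}) shows that $\pi^*\colon H^{2i}(B)\to H^{2i}(Z)$ is surjective for every $i$. Hence the MV restriction $H^{2i}(N_+\sqcup N_-)\to H^{2i}(Z_+\sqcup Z_-)$ is surjective, which by exactness forces $\delta\colon H^{2i}(Z_+\sqcup Z_-)\to H^{2i+1}(M')$ to vanish. Together with $H^{2i-1}(N_\pm)=0$, the long exact sequence splits, for each $i$, into the six-term exact sequence
\[
0\to H^{2i-1}(M')\to H^{2i-1}(\tM)\to H^{2i-1}(Z)^{\oplus 2}\to H^{2i}(M')\to H^{2i}(\tM)\oplus H^{2i}(B)^{\oplus 2}\to H^{2i}(Z)^{\oplus 2}\to 0.
\]

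Taking the alternating sum of dimensions in this six-term sequence yields
\[
b_{2i}(\tM)-b_{2i-1}(\tM)=b_{2i}(M')-b_{2i-1}(M')+2b_{2i}(Z)-2b_{2i-1}(Z)-2b_{2i}(B),
\]
and substituting $2b_{2i}(Z)-2b_{2i-1}(Z)=2b_{2i}(B)-2b_{2i-2}(B)$ from Lemma~\ref{lemm:3-1} collapses the right-hand side to $b_{2i}(M')-b_{2i-1}(M')-2b_{2i-2}(B)$. The only real obstacle is justifying the vanishing of $\delta$ in even degrees, which is why the Gysin surjectivity argument is placed at the beginning; once that is in hand the rest is bookkeeping.
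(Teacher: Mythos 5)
Your proof is correct and follows essentially the same route as the paper: the Mayer--Vietoris sequence for $(M',\tM,N_+\cup N_-)$, the vanishing of the connecting maps in even degrees via the Gysin surjectivity of $\pi^*\colon H^{2i}(B)\to H^{2i}(Z)$, and then an alternating-sum count combined with $H^{\mathrm{odd}}(B)=0$ and Lemma~\ref{lemm:3-1}. No gaps; the bookkeeping matches the paper's computation.
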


\begin{proof}
We consider the Mayer-Vietoris exact sequence in cohomology for
the triple $(M', \tM, N_+\cup N_-)$:
\begin{alignat*}{5}
\to &H^{2i-2}(M')&\to &H^{2i-2}(\tM)\oplus H^{2i-2}(N_+\cup N_-)&\to &H^{2i-2}(Z_+\cup Z_-)\\
\xrightarrow{\delta^{2i-2}}&H^{2i-1}(M')&\to &H^{2i-1}(\tM)\oplus H^{2i-1}(N_+\cup N_-)&\to &H^{2i-1}(Z_+\cup Z_-)\\
\xrightarrow{\delta^{2i-1}}&H^{2i}(M')&\to &H^{2i}(\tM)\oplus H^{2i}(N_+\cup N_-)&\to &H^{2i}(Z_+\cup Z_-)\\
\xrightarrow{\delta^{2i}}&H^{2i+1}(M')&\to& &&
\end{alignat*}
Since the inclusions $B=B_\pm\mapsto N_\pm$ are homotopy
equivalences and $Z_\pm=Z$, the restriction homomorphism
$H^q(N_+\cup N_-)\to H^q(Z_+\cup Z_-)$ above can be replaced by
$\pi^*\oplus\pi^*\colon H^q(B)\oplus H^q(B)\to H^q(Z)\oplus
H^q(Z)$ which is surjective for even $q$ from the sequence~\eqref{eq:3-4}.
Therefore, $\delta^{2i-2}$ and $\delta^{2i}$ in the exact sequence
above are trivial. It follows that
\[
\begin{split}
&b_{2i-1}(M')-b_{2i-1}(\tM)-2b_{2i-1}(B)+2b_{2i-1}(Z)\\
-&b_{2i}(M')+b_{2i}(\tM)+2b_{2i}(B)-2b_{2i}(Z)=0.
\end{split}
\]
Here $b_{2i-1}(B)=0$ because $B$ is a symplectic toric manifold,
and $2b_{2i-1}(Z)+2b_{2i}(B)-2b_{2i}(Z)=2b_{2i-2}(B)$ by
Lemma~\ref{lemm:3-1}.  Using these identities, the identity above
reduces to the identity in the lemma.
\end{proof}

Next we consider the Mayer-Vietoris exact sequence in cohomology
for the triple $(M,\tM,Z\times [-1,1])$:
\begin{alignat*}{5}
\to &H^{2i-2}(M)&\to &H^{2i-2}(\tM)\oplus H^{2i-2}(Z\times[-1,1])&\to &H^{2i-2}(Z_+\cup Z_-)\\
\to &H^{2i-1}(M)&\to &H^{2i-1}(\tM)\oplus H^{2i-1}(Z\times[-1,1])&\to &H^{2i-1}(Z_+\cup Z_-)\\
\to &H^{2i}(M)&\to &H^{2i}(\tM)\oplus H^{2i}(Z\times[-1,1])&\to &H^{2i}(Z_+\cup Z_-)\to
\end{alignat*}
We make the following assumption:
\begin{quote}
$(*)$ The restriction map $H^{2j}(\tM)\oplus H^{2j}(Z\times[-1,1])\to H^{2j}(Z_+\cup Z_-)$ in the Mayer-Vietoris sequence above is surjective for $j\ge 1$.
\end{quote}
Note that the restriction map above is not surjective when $j=0$ because the image is the diagonal copy of $H^0(Z)$ in this case and we will see in Lemma~\ref{lemm:3-5} below that the assumption
$(*)$ is satisfied when every proper face of $M/T$ is acyclic.

\begin{lemma} \label{lemm:3-6}
Suppose that the assumption $(*)$ is satisfied.  Then
\[
\begin{split}
&b_2(\tM)-b_1(\tM)=b_2(M)-b_1(M)+b_2(B),\\
&b_{2i}(\tM)-b_{2i-1}(\tM)=b_{2i}(M)-b_{2i-1}(M)+b_{2i}(B)-b_{2i-2}(B) \quad\text{for $i\ge 2$}.
\end{split}
\]
\end{lemma}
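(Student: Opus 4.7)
The plan is to extract the identities directly from the Mayer--Vietoris sequence for the cover $M=\tM\cup(Z\times[-1,1])$ set up just above the statement, using hypothesis $(*)$ to kill the even-degree connecting homomorphisms. Since $Z\times[-1,1]\simeq Z$ and $Z_+\cup Z_-\simeq Z\sqcup Z$, the restriction map in that sequence becomes $H^q(\tM)\oplus H^q(Z)\to H^q(Z)\oplus H^q(Z)$, and $(*)$ asserts its surjectivity in every positive even degree. Equivalently, the connecting map $\delta^{2j}\colon H^{2j}(Z_+\cup Z_-)\to H^{2j+1}(M)$ vanishes for $j\ge 1$.

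For each $i\ge 2$ this splits out a six-term exact sequence
\[
0\to H^{2i-1}(M)\to H^{2i-1}(\tM)\oplus H^{2i-1}(Z)\to H^{2i-1}(Z)^{\oplus 2}\to H^{2i}(M)\to H^{2i}(\tM)\oplus H^{2i}(Z)\to H^{2i}(Z)^{\oplus 2}\to 0.
\]
Taking alternating sums of ranks, the $Z$-contributions collapse to $b_{2i-1}(Z)-b_{2i}(Z)$, which Lemma~\ref{lemm:3-1} converts into $b_{2i-2}(B)-b_{2i}(B)$; rearranging gives the second formula.

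The only delicate case is $i=1$, where $\delta^0$ need not vanish; I would keep the full nine-term chunk starting at $H^0$ and ending with $H^2(Z)^{\oplus 2}\to 0$, and again read off the alternating sum. This requires $b_0(\tM)=1$, which I would verify using that $e$ was chosen inside a cycle of $G$, so $M'$ is connected: if $\tM$ had two components $\tM_1,\tM_2$, then attaching the disk bundles $N_\pm$ along $Z_\pm\subset\partial\tM$ would either leave one $\tM_j$ as a standalone component of $M'$ or split $M'$ as $(\tM_1\cup N_+)\sqcup(\tM_2\cup N_-)$, both contradicting the connectedness of $M'$. Combined with $b_0(Z_+\cup Z_-)=2$, the alternating-sum identity reads $b_2(\tM)-b_1(\tM)=b_2(M)-b_1(M)+b_2(Z)-b_1(Z)+1$, and applying Lemma~\ref{lemm:3-1} with $b_0(B)=1$ absorbs the $+1$ and recovers the first formula. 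The main obstacle is this $i=1$ bookkeeping (precisely because $(*)$ fails at $j=0$); everything else is a routine rank-count in the long exact sequence.
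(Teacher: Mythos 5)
Your proposal is correct and follows essentially the same route as the paper: split the Mayer--Vietoris sequence for $(M,\tM,Z\times[-1,1])$ using $(*)$ to kill the even-degree connecting maps, take alternating sums of ranks (keeping the nine-term chunk through degree $2$ for the $i=1$ case, the six-term chunk for $i\ge 2$), and convert $b_{2i}(Z)-b_{2i-1}(Z)$ into $b_{2i}(B)-b_{2i-2}(B)$ via Lemma~\ref{lemm:3-1}. Your extra check that $\tM$ is connected (via connectedness of $M'$) is a point the paper leaves implicit, and your argument for it is sound.
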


\begin{proof}
By the assumption $(*)$, the Mayer-Vietoris exact sequence for the
triple $(M,\tM,Z\times [-1,1])$ splits into short exact sequences:
\begin{alignat*}{5}
0\to &H^{0}(M)&\to &H^{0}(\tM)\oplus H^{0}(Z\times[-1,1])&\to &H^{0}(Z_+\cup Z_-)\\
 \to &H^{1}(M)&\to &H^{1}(\tM)\oplus H^{1}(Z\times[-1,1])&\to &H^{1}(Z_+\cup Z_-)\\
\to &H^{2}(M)&\to &H^{2}(\tM)\oplus H^{2}(Z\times[-1,1])&\to &H^{2}(Z_+\cup Z_-)\to 0
\end{alignat*}
and for $i\ge 2$
\begin{alignat*}{5}
0 \to &H^{2i-1}(M)&\to &H^{2i-1}(\tM)\oplus H^{2i-1}(Z\times[-1,1])&\to &H^{2i-1}(Z_+\cup Z_-)\\
\to &H^{2i}(M)&\to &H^{2i}(\tM)\oplus H^{2i}(Z\times[-1,1])&\to &H^{2i}(Z_+\cup Z_-)\to 0.
\end{alignat*}
The former short exact sequence above yields
\[
b_2(\tM)-b_1(\tM)=b_2(M)-b_1(M)+b_2(Z)-b_1(Z)+1
\]
while the latter above yields
\[
b_{2i}(\tM)-b_{2i-1}(\tM)=b_{2i}(M)-b_{2i-1}(M)+b_{2i}(Z)-b_{2i-1}(Z) \quad\text{for $i\ge 2$}.
\]
Here $b_{2i}(Z)-b_{2i-1}(Z)=b_{2i}(B)-b_{2i-2}(B)$ for every $i$ by Lemma~\ref{lemm:3-1}, so our lemma follows.
\end{proof}

\begin{lemma} \label{lemm:3-7}
Suppose that the assumption $(*)$ is satisfied and $n\ge 2$.  Then
\[
\begin{split}
&b_1(M')=b_1(M)-1,\quad b_2(M')=b_2(M)+b_2(B)+1,\\
&b_{2i+1}(M')=b_{2i+1}(M)\quad \text{for $1\le i\le n-2$}.
\end{split}
\]
\end{lemma}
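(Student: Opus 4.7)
The plan is to combine the two identities supplied by Lemmas~\ref{lemm:3-4} and~\ref{lemm:3-6} (thereby eliminating the auxiliary space $\tM$) and then to invoke Poincar\'e duality on the closed orientable $2n$-manifolds $M$ and $M'$ and on the closed symplectic toric $(2n-2)$-manifold $B$ in order to separate the even and odd Betti numbers. As a first step I equate the two expressions for $b_{2i}(\tM)-b_{2i-1}(\tM)$. Using $b_0(B)=1$, the case $i=1$ yields an identity (call it (1))
\[
b_2(M')-b_1(M')=b_2(M)-b_1(M)+b_2(B)+2,
\]
while for each $i\ge 2$ one obtains (call it (2))
\[
b_{2i}(M')-b_{2i-1}(M')=b_{2i}(M)-b_{2i-1}(M)+b_{2i}(B)+b_{2i-2}(B).
\]

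Next I apply Poincar\'e duality. Substituting $i\mapsto n-j$ in (2) and using $b_k(M)=b_{2n-k}(M)$, $b_k(M')=b_{2n-k}(M')$, and $b_k(B)=b_{2n-2-k}(B)$ (with the convention $b_{-2}(B)=0$) produces, for $0\le j\le n-2$, the identity (call it (3))
\[
b_{2j}(M')-b_{2j+1}(M')=b_{2j}(M)-b_{2j+1}(M)+b_{2j-2}(B)+b_{2j}(B).
\]
Setting $j=0$ in (3) and using $b_0(M)=b_0(M')=1$ and $b_0(B)=1$ gives (a) $b_1(M')=b_1(M)-1$. Substituting (a) into (1) then gives (b) $b_2(M')=b_2(M)+b_2(B)+1$.

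For (c) I set $u_k:=b_k(M')-b_k(M)$. Identities (2) and (3) rearrange to $u_{2i-1}=u_{2i}-b_{2i}(B)-b_{2i-2}(B)$ for $i\ge 2$ and $u_{2j+1}=u_{2j}-b_{2j}(B)-b_{2j-2}(B)$ for $0\le j\le n-2$. Subtracting these at $i=j$ (both valid when $2\le i\le n-2$) yields the shift relation $u_{2i+1}=u_{2i-1}$, so $u_3=u_5=\cdots=u_{2n-3}$. From (3) at $j=1$ together with (b) one computes $u_3=u_2-1-b_2(B)=0$, which gives (c). The only real obstacle is bookkeeping: one must respect the valid ranges of (1), (2), (3) and the boundary conventions $b_{-2}(B)=0$ and $b_0(B)=b_0(M)=b_0(M')=1$. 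With this accounting in place the argument requires no further geometric input beyond the preceding lemmas and Poincar\'e duality.
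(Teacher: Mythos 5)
Your proposal is correct and follows essentially the same route as the paper: combine Lemmas~\ref{lemm:3-4} and~\ref{lemm:3-6} to eliminate $\tM$ (your (1) and (2), the latter being the paper's \eqref{eq:3-5}), apply Poincar\'e duality for $M$, $M'$ and $B$ via the substitution $i\mapsto n-j$, read off the cases $j=0$ and $j=1$ to get $b_1(M')=b_1(M)-1$, $b_2(M')=b_2(M)+b_2(B)+1$ and $b_3(M')=b_3(M)$, and then use the resulting shift relation $u_{2i+1}=u_{2i-1}$ to propagate to all odd degrees. The only differences are notational ($u_k$ bookkeeping versus the paper's direct manipulation), so there is nothing to add.
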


\begin{proof}
It follows from Lemma~\ref{lemm:3-4} and Lemma~\ref{lemm:3-6} that
\begin{equation}\label{eq:3-5}
b_{2i}(M')-b_{2i-1}(M')=b_{2i}(M)-b_{2i-1}(M)+b_{2i}(B)+b_{2i-2}(B) \quad\text{for $i\ge 2$}.
\end{equation}
Take $i=n$ in \eqref{eq:3-5} and use Poincar\'e duality.  Then we
obtain
\[
b_0(M')-b_1(M')=b_0(M)-b_1(M)+b_0(B)
\]
which reduces to the first identity in the lemma. This together
with the first identity in Lemma~\ref{lemm:3-6} implies the second
identity in the lemma.

Similarly, take $i=n-1(\ge 2)$ in \eqref{eq:3-5} and use
Poincar\'e duality.  Then we obtain
\[
b_2(M')-b_3(M')=b_2(M)-b_3(M)+b_0(B)+b_2(B).
\]
This together with the second identity in the lemma implies
$b_3(M')=b_3(M)$.

Take $i$ to be $n-i$ in \eqref{eq:3-5} (so $2\le i\le n-2$) and
use Poincar\'e duality.  Then we obtain
\begin{equation*}
b_{2i}(M')-b_{2i+1}(M')=b_{2i}(M)-b_{2i+1}(M)+b_{2i-2}(B)+b_{2i}(B).
\end{equation*}
This together with \eqref{eq:3-5} implies
\[
b_{2i+1}(M')-b_{2i-1}(M')=b_{2i+1}(M)-b_{2i-1}(M) \quad \text{for $2\le i\le n-2$}.
\]
Since we know $b_3(M')=b_3(M)$, this implies the last identity in
the lemma.
\end{proof}

The following is a key lemma.

\begin{lemma} \label{lemm:3-5}
Suppose that every proper face of $M/T$ is acyclic.  Then the
homomorphism $H^{2j}(\tM)\to H^{2j}(Z_+\cup Z_-)$ induced from the
inclusion is surjective for $j\ge 1$, in particular, the
assumption $(*)$ is satisfied.
\end{lemma}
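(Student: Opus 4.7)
The plan is to prove the stronger surjectivity $H^{2j}(\tM)\to H^{2j}(Z_+\cup Z_-)$ for all $j\ge 1$, from which $(*)$ follows immediately because the map in $(*)$ is then already surjective through the $\tM$ summand alone. I would split the argument into a reduction to degree~$2$ via multiplicativity, followed by a direct construction in degree~$2$ that uses the acyclicity hypothesis.

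For the reduction, I would note first that $H^*(B)$ is generated in degree~$2$ by the Poincar\'e duals of its characteristic submanifolds (one for each facet of the Delzant polytope~$F$), since $B$ is a symplectic toric manifold. The Gysin sequence~\eqref{eq:3-4} shows that $\pi^*\colon H^{2j}(B)\to H^{2j}(Z)$ is surjective for $j\ge 1$, so $H^{2j}(Z)$ is spanned by $j$-fold products of elements of $H^2(Z)$. Since $Z_+$ and $Z_-$ are disjoint, $H^*(Z_+\cup Z_-)=H^*(Z_+)\oplus H^*(Z_-)$ is a product ring and the restriction from $\tM$ is a ring homomorphism. It therefore suffices to find, for each generator $v\in H^2(Z)$, classes $\alpha_v^+,\alpha_v^-\in H^2(\tM)$ restricting to $(v,0)$ and $(0,v)$ respectively; products of the $\alpha^+$'s then fill out $H^{2j}(Z_+)\oplus 0$, products of the $\alpha^-$'s fill out $0\oplus H^{2j}(Z_-)$, and their sum is the full target in every degree $2j$ with $j\ge 1$.

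For the degree-$2$ construction, every generator of $H^2(Z)$ has the form $\pi^* v_{F\cap F'}$ for a facet $F'$ of $M/T$ adjacent to the fold facet~$F$, and is Poincar\'e dual in $Z$ to $X_{F'}\cap Z$, where $X_{F'}\subset M$ is the characteristic submanifold for~$F'$. The intersection $\tilde X_{F'}:=X_{F'}\cap\tM$ is a codimension-$2$ submanifold of $\tM$ whose boundary consists of a copy of $X_{F'}\cap Z$ in each of $Z_+$ and $Z_-$, and by Poincar\'e--Lefschetz duality for the compact oriented $2n$-manifold $\tM$ with boundary, it defines an absolute class in $H^2(\tM)$ restricting diagonally as $(v,v)$. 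To obtain the asymmetric classes $\alpha_v^\pm$, I would exploit that $X_{F'}$ is itself a lower-dimensional toric origami manifold with $X_{F'}\cap Z$ as one of its folds, and whose proper orbit-space faces remain acyclic; combining the diagonal Thom class of $\tilde X_{F'}$ with correction cycles constructed from other characteristic submanifolds (whose existence is guaranteed by the acyclicity hypothesis) should yield codimension-$2$ submanifolds of $\tM$ with boundary confined to a single component of~$\partial\tM$.

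The principal obstacle is exactly this off-diagonal construction in degree~$2$: any class pulled back from the closed manifolds $M$ or $M'$ restricts identically to $Z_+$ and $Z_-$, so the off-diagonal part of $H^2(Z_+\cup Z_-)$ must be hit by classes genuinely born in $H^2(\tM)$. The acyclicity of the proper faces of $M/T$ is used essentially here, since it controls the cohomology of each characteristic submanifold $X_{F'}$ and thereby allows one to resolve the bordism-type obstructions to splitting $\tilde X_{F'}$ into a $Z_+$-piece and a $Z_-$-piece; once these splittings are available, the multiplicative argument in the first step propagates the surjectivity to every degree~$2j\ge 2$.
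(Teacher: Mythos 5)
Your reduction to degree $2$ via multiplicativity is sound (Gysin surjectivity plus the fact that $H^*(B)$ is generated in degree $2$ does reduce everything to finding classes in $H^2(\tM)$ restricting to $(v,0)$ and $(0,v)$), but that off-diagonal degree-$2$ step is exactly the heart of the lemma and you leave it unproved. The passage about ``combining the diagonal Thom class of $\tilde X_{F'}$ with correction cycles constructed from other characteristic submanifolds'' and ``resolving the bordism-type obstructions'' is a heuristic, not an argument: you never exhibit a codimension-two submanifold of $\tM$ (or a cohomology class) whose boundary lies in only one of $Z_\pm$, nor do you say concretely how the acyclicity of proper faces enters beyond asserting that it ``controls'' the situation. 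As written, the proof has a genuine gap at what you yourself call the principal obstacle.

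Moreover, the belief steering your strategy --- that any class pulled back from the closed manifold $M'$ restricts identically to $Z_+$ and $Z_-$ --- is false, and this is precisely what the paper exploits. While $Z_+$ and $Z_-$ are parallel copies of $Z$ inside $M$, in $M'$ they are not: they bound the two different disk bundles $N_\pm$ over $B_\pm$. The paper shows (and this is where acyclicity is used, via the local coincidence of the polytopes near the fold facet and a comparison of normal cones) that no face of $M'/T$ meets both $F_+$ and $F_-$; hence for each codimension-$j$ face $K_+$ of $F_+$ the face $L$ of $M'/T$ with $L\cap F_+=K_+$ satisfies $L\cap F_-=\emptyset$, so its Poincar\'e dual $\tau_L\in H^{2j}(M')$ restricts to $(\tau_{K_+},0)$ in $H^{2j}(B_+)\oplus H^{2j}(B_-)$. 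This gives surjectivity of $H^{2j}(M')\to H^{2j}(B_+\cup B_-)$ for all $j\ge 1$ in one stroke (no reduction to degree $2$ is needed), and the commutative square with the Gysin surjection $\pi_\pm^*\colon H^{2j}(B_\pm)\to H^{2j}(Z_\pm)$ then forces $H^{2j}(\tM)\to H^{2j}(Z_+\cup Z_-)$ to be surjective. If you want to complete your argument, the missing asymmetric degree-$2$ classes are most naturally obtained exactly this way, as restrictions to $\tM$ of duals of characteristic submanifolds of $M'$ meeting $B_+$ but not $B_-$; once you do that, you have in effect reproduced the paper's proof.
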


\begin{proof}
Since $B_+\cup B_-$ is a deformation retract of $N_+\cup N_-$, the
following diagram is commutative:
\[
\begin{CD}
H^{2j}(M')@>>> H^{2j}(B_+\cup B_-)\\
@VVV  @VV \pi_\pm^* V\\
H^{2j}(\tM) @>>> H^{2j}(Z_+\cup Z_-)
\end{CD}
\]
where $\pi_\pm\colon Z_+\cup Z_-\to B_+\cup B_-$ is the projection
and the other homomorphisms are induced from the inclusions.  By
\eqref{eq:3-4}, $\pi_\pm^*$ is surjective, so it suffices to show
that the homomorphism $H^{2j}(M')\to H^{2j}(B_+\cup B_-)$ is
surjective for $j\ge 1$.

The inverse image of a codimension $j$ face of $M'/T$ by the
quotient map $M'\to M'/T$ is a codimension $2j$ closed orientable
submanifold of $M'$ and defines an element of $H_{2n-2j}(M')$ so
that its Poincar\'e dual yields an element of $H^{2j}(M')$.  The
same is true for $B=B_+$ or $B_-$.  Note that $H^{2j}(B)$ is
additively generated by $\tau_{K}$'s where $K$ runs over all
codimension $j$ faces of $F=B/T$.

Set $F_\pm=B_\pm/T$, which are copies of the folded facet $F=B/T$.
Let $K_+$ be a codimension $j$ face of $F_+$.  Then there is a
codimension $j$ face $L$ of $M'/T$ such that $K_+=L\cap F_+$. We
note that $L\cap F_-=\emptyset$.  Indeed, if $L\cap
F_-\not=\emptyset$, then $L\cap F_-$ must be a codimension $j$
face of $F_-$, say $H_-$.  If $H_-$ is the copy $K_-$ of $K_+$,
then $L$ will create a codimension $j$ non-acyclic face of $M/T$
which contradicts the acyclicity assumption on proper faces of
$M/T$.  Therefore, $H_-\not=K_-$.  However, $F_\pm$ are
respectively facets of some Delzant polytopes, say $P_\pm$, and
the neighborhood of $F_+$ in $P_+$ is same as that of $F_-$ in
$P_-$ by definition of an origami template (although $P_+$ and
$P_-$ may not be isomorphic).  Let $\bar H$ and $\bar K$ be the
codimension $j$ faces of $P_-$ such that $\bar H\cap F=H_-$ and
$\bar K\cap F=K_-$.  Since $H_-\not=K_-$, the normal cones of
$\bar H$ and $\bar K$ are different.  However, these normal cones
must agree with that of $L$ because $L\cap F_+=K_+$ and $L\cap
F_-=H_-$ and the neighborhood of $F_+$ in $P_+$ is same as that of
$F_-$ in $P_-$.  This is a contradiction.

The codimension $j$ face $L$ of $M'/T$ associates an element
$\tau_L\in H^{2j}(M')$.  Since $L\cap F_+=K_+$ and $L\cap
F_-=\emptyset$, the restriction of $\tau_L$ to $H^{2j}(B_+\cup
B_-)=H^{2j}(B_+)\oplus H^{2j}(B_-)$ is $(\tau_{K_+},0)$, where
$\tau_{K_+}\in H^{2j}(B_+)$ is associated to $K_+$.  Since
$H^{2j}(B_+)$ is additively generated by $\tau_{K_+}$'s where
$K_+$ runs over all codimension $j$ faces of $F_+$, for each element $(x_+,0)$ in $H^{2j}(B_+)\oplus
H^{2j}(B_-)=H^{2j}(B\cup B_-)$, there is an element $y_+\in
H^{2j}(M')$ whose restriction image is $(x_+,0)$.  The same is
true for each element $(0,x_-)\in H^{2j}(B_+)\oplus H^{2j}(B_-)$.
This implies the lemma.
\end{proof}

Finally, we obtain the following.

\begin{theorem} \label{theo:3-1}
Let $M$ be an orientable toric origami manifold of dimension $2n$
$(n\ge 2)$ such that every proper face of $M/T$ is acyclic.  Then
\begin{equation} \label{eq:theo3-1}
b_{2i+1}(M)=0\quad\text{for $1\le i\le n-2$}.
\end{equation}
Moreover, if $M'$ and $B$ are as above, then
\begin{equation} \label{eq:theo3-1-1}
\begin{split}
&b_1(M')=b_1(M)-1\,\,(\text{hence $b_{2n-1}(M')=b_{2n-1}(M)-1$ by Poincar\'e duality}),\\
&b_{2i}(M')=b_{2i}(M)+b_{2i}(B)+b_{2i-2}(B)\quad\text{for $1\le i\le n-1$}.
\end{split}
\end{equation}
Finally, $H^*(M)$ is torsion free.
\end{theorem}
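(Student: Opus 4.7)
The plan is to proceed by induction on the first Betti number $b_1(G)$ of the template graph. In the base case $b_1(G)=0$, the graph $G$ is a tree, so $M/T$ itself is contractible and, together with the hypothesis that every proper face is acyclic, the theorem of \cite{ma-pa06} gives $H^*(M;\Z)\cong \Z[M/T]/(\theta_1,\dots,\theta_n)$, which is torsion-free and concentrated in even degrees. Both (\ref{eq:theo3-1}) and the torsion-freeness are immediate in this case, while (\ref{eq:theo3-1-1}) is vacuous since no cycle edge is available.

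For the inductive step, assume $b_1(G)\ge 1$, pick an edge $e$ in a cycle of $G$, and form $M'$ as described before Lemma~\ref{lemm:3-1}, so that $b_1(G')=b_1(G)-1$ and $M'$ is again connected and orientable. One first checks that every proper face of $M'/T$ is again acyclic: such a face is either a face of $M/T$ disjoint from the folded facet $F$, or a face of one of the new copies $F_\pm$ of $F$ (acyclic since $F$ is a Delzant polytope). A face of $M'/T$ meeting both $F_+$ and $F_-$ would come from a face of $M/T$ touching $F$ on both sides, and the normal-cone argument used inside the proof of Lemma~\ref{lemm:3-5} shows that such a face would be non-acyclic, contradicting the hypothesis. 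Hence the inductive hypothesis applies to $M'$.

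From $b_{2i+1}(M')=0$ for $1\le i\le n-2$, Lemma~\ref{lemm:3-7} gives $b_{2i+1}(M)=b_{2i+1}(M')=0$ in the same range, proving (\ref{eq:theo3-1}). Substituting this vanishing into identity (\ref{eq:3-5}) established in the proof of Lemma~\ref{lemm:3-7} yields $b_{2i}(M')=b_{2i}(M)+b_{2i}(B)+b_{2i-2}(B)$ for $2\le i\le n-1$; the case $i=1$ is stated directly in Lemma~\ref{lemm:3-7} (using $b_0(B)=1$), and the same lemma supplies $b_1(M')=b_1(M)-1$, which Poincar\'e duality converts into $b_{2n-1}(M')=b_{2n-1}(M)-1$. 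This gives (\ref{eq:theo3-1-1}).

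For the torsion-freeness claim, the key observation is that every tool employed above -- the Gysin sequence of Lemma~\ref{lemm:3-1}, the two Mayer--Vietoris arguments of Lemmas~\ref{lemm:3-4} and \ref{lemm:3-6}, Poincar\'e duality, and the combinatorial surjectivity of Lemma~\ref{lemm:3-5} -- remains valid with coefficients in an arbitrary field, and the Masuda--Panov base case already produces integrally torsion-free cohomology. Consequently $\dim_k H^i(M;k)=b_i(M)$ holds for every field $k$ and every $i$, and the universal coefficient theorem then forces $H^*(M;\Z)$ to have no torsion. The main obstacle is the combinatorial step that the acyclicity hypothesis transfers cleanly from $M/T$ to $M'/T$; once this is secured, the Betti-number identities become routine bookkeeping from Lemmas~\ref{lemm:3-4}--\ref{lemm:3-7}, and the universal-coefficients argument handles torsion-freeness uniformly.
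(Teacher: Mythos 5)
Your argument is essentially the paper's own proof: induction on $b_1$ of the template graph, with the tree case supplied by \cite{ma-pa06}/\cite{ho-pi12}, the inductive step carried by Lemmas~\ref{lemm:3-4}--\ref{lemm:3-7} together with \eqref{eq:3-5}, and torsion-freeness obtained from the field-independence of the Betti number computation via universal coefficients. The one point you treat more explicitly than the paper is the transfer of the acyclicity hypothesis to the proper faces of $M'/T$ (the paper leaves this implicit in ``repeating this argument''); note only that your trichotomy omits faces of $M'/T$ meeting exactly one of $F_\pm$ without being contained in it, which are the pieces obtained by cutting an acyclic face of $M/T$ along a face of $F$ and are acyclic by the same tree/subgraph (or Mayer--Vietoris) reasoning.
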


\begin{proof}
We have $b_1(M')=b_1(M)-1$ by Lemma~\ref{lemm:3-7}. Therefore, if
$b_1(M)=1$, then $b_1(M')=0$, that is, the graph associated to
$M'$ is acyclic and hence $b_{odd}(M')=0$ by \cite{ho-pi12} (or
\cite{ma-pa06}).  This together with Lemma~\ref{lemm:3-7} shows
that $b_{2i+1}(M)=0$ for $1\le i\le n-2$ when $b_1(M)=1$.  If
$b_1(M)=2$, then $b_1(M')=1$ so that $b_{2i+1}(M')=0$ for $1\le
i\le n-2$ by the observation just made and hence $b_{2i+1}(M)=0$
for $1\le i\le n-2$ by Lemma~\ref{lemm:3-7}.  Repeating this
argument, we see \eqref{eq:theo3-1}.

The relations in~\eqref{eq:theo3-1-1} follow from Lemma~\ref{lemm:3-7} and
\eqref{eq:3-5} together with the fact $b_{2i+1}(M)=0$ for $1\le
i\le n-2$.

As we remarked before Lemma~\ref{lemm:3-1}, the arguments developed in this section work with any field coefficients, in particular with $\Z/p$-coefficients
for every prime $p$. Hence \eqref{eq:theo3-1} and \eqref{eq:theo3-1-1}
hold for Betti numbers with $\Z/p$-coefficients. Accordingly, the Betti
numbers of $M$ with $\Z$-coefficients agree with the Betti numbers
of $M$ with $\Z/p$-coefficients for every prime $p$.  This implies
that $H^*(M)$ has no torsion.
\end{proof}


As for $H^1(M)$, we have a clear geometrical picture.

\begin{proposition} \label{prop:3-1}
Let $M$ be an orientable toric origami manifold of dimension $2n$
$(n\ge 2)$ such that every proper face of $M/T$ is acyclic. Let
$Z_1,\dots,Z_{b_1}$ be folds in $M$ such that the graph associated
to the origami template of $M$ with the $b_1$ edges corresponding
to $Z_1,\dots,Z_{b_1}$ removed is a tree.  Then
$Z_1,\dots,Z_{b_1}$ freely generate $H_{2n-1}(M)$, equivalently,
their Poincar\'e duals $z_1$, \dots, $z_{b_1}$ freely generate
$H^1(M)$.  Furthermore, all the products generated by
$z_1,\dots,z_{b_1}$ are trivial because $Z_1,\dots,Z_{b_1}$ are
disjoint and the normal bundle of $Z_j$ is trivial for each $j$.
\end{proposition}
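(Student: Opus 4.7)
The plan is to combine the Betti number count from Theorem~\ref{theo:3-1} with an explicit intersection pairing. By iterating the identity $b_1(M')=b_1(M)-1$ from Theorem~\ref{theo:3-1} through the $b_1$ fold edges lying in cycles, one reduces $M$ to a toric origami manifold whose template graph is a tree; the latter has vanishing odd Betti numbers by \cite{ho-pi12}. This gives $b_1(M)=b_1$, and since Theorem~\ref{theo:3-1} also asserts that $H^*(M)$ is torsion-free, Poincar\'e duality furnishes $b_{2n-1}(M)=b_1$ together with a perfect pairing $H^1(M)\otimes H_1(M)\to\Z$ between free abelian groups of rank $b_1$. Hence it suffices to produce $1$-cycles $\gamma_1,\dots,\gamma_{b_1}$ with $\langle z_j,[\gamma_i]\rangle=[Z_j]\cdot[\gamma_i]=\delta_{ij}$: the pairing matrix will then be unimodular, so $z_1,\dots,z_{b_1}$ freely generate $H^1(M)$ and equivalently $[Z_1],\dots,[Z_{b_1}]$ freely generate $H_{2n-1}(M)$.

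To construct the dual cycles, observe that the edges $e_1,\dots,e_{b_1}$ corresponding to $Z_1,\dots,Z_{b_1}$ give a canonical cycle basis of $H_1(G)$: together with the complementary spanning tree, each $e_i$ closes up to a unique simple loop $c_i$ in $G$ that traverses $e_i$ once and misses every $e_j$ with $j\neq i$. Via the homotopy equivalence $M/T\simeq G$, I would realize $c_i$ as a loop in $M/T$ transverse to every fold facet and meeting $\PsiE(e_i)$ in exactly one point. Local standardness of the $T$-action allows this loop to be lifted through the orbit map to a path in $M$; since the torus fibre over the basepoint is connected, we may close the lift up to a smooth loop $\gamma_i\subset M$. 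By construction $\gamma_i$ meets $Z_i$ transversely in a single point and is disjoint from $Z_j$ for $j\neq i$, so after possibly reorienting $\gamma_i$ we obtain $[Z_j]\cdot[\gamma_i]=\delta_{ij}$.

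For the product structure, if $i\neq j$ then $Z_i$ and $Z_j$ are distinct components of $\Sing\subset M$ and hence already disjoint, so $z_iz_j$ is Poincar\'e dual to the empty intersection and vanishes. The self-intersection $z_j^2$ is Poincar\'e dual to the Euler class of the normal bundle of $Z_j$; since this normal bundle is trivial (with tubular neighborhood $Z_j\times[-1,1]$), we may push $Z_j$ off itself to a disjoint parallel copy $Z_j\times\{1/2\}$, yielding $z_j^2=0$. All higher products vanish automatically because every quadratic factor already does.

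The main obstacle is the controlled lift in the second paragraph: promoting the combinatorial cycle $c_i\subset G$ to a smooth loop $\gamma_i\subset M$ whose transverse intersection with each fold $Z_j$ matches the combinatorial intersection with the edge $e_j$, including signs. This requires the locally standard structure of the torus action, connectedness of the torus fibres, compatibility of lifts across fold facets, and a consistent orientation convention that pins down the signs of the intersection numbers.
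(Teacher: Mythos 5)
Your proposal is correct but proceeds along a genuinely different route from the paper. The paper argues by induction on $b_1$: cutting along one fold, it compares the homology Mayer--Vietoris sequences of $(M,\tM,Z\times[-1,1])$ and $(M',\tM,N_+\cup N_-)$, uses Lemma~\ref{lemm:3-5} (surjectivity of $H^{2n-2}(\tM)\to H^{2n-2}(Z_+\cup Z_-)$, hence injectivity of ${\iota_1}_*$ on $H_{2n-2}$) together with $\partial_*[M]=[Z_+]-[Z_-]$ to obtain $H_{2n-1}(\tM)\cong H_{2n-1}(M)$ and to see that this group is freely generated by the folds; no dual cycles or intersection numbers appear, and Theorem~\ref{theo:3-1} is not invoked. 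You instead feed in the Betti-number computation ($b_{2n-1}(M)=b_1$ and freeness of $H_{2n-1}(M)$) and produce explicit $1$-cycles $\gamma_i$ whose intersection matrix with the $Z_j$ is $\pm$-diagonal, hence unimodular, which indeed forces $[Z_1],\dots,[Z_{b_1}]$ to be a free basis. The lifting step you flag as the main obstacle is routine: generic points of a fold are free orbits (isotropy groups are subtori by local standardness, and a dimension count shows generic isotropy on $Z$ is trivial), so each loop $c_i$ can be taken inside $M^\circ/T$ crossing only the wall corresponding to $e_i$ among the chosen folds; its preimage is a trivial $T$-bundle over a circle, so a closed lift exists, transversality upstairs follows from transversality in the orbit space, and reorienting each $\gamma_i$ fixes the signs. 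Two minor caveats: your identification $b_1(M)=b_1(G)$ by iterating $b_1(M')=b_1(M)-1$ tacitly assumes the acyclicity hypothesis persists for the cut manifolds, which is the same implicit step the paper makes in its own inductions, so you are no worse off; and the perfect pairing you invoke between $H^1(M)$ and $H_1(M)$ comes from universal coefficients rather than Poincar\'e duality, though your argument only uses the unimodular submatrix in any case. Your intersection-theoretic construction is close in spirit to the cycles $S_i\times T^1_l$ the paper uses later in Section~\ref{sectCokernel}; its payoff is an explicit geometric dual basis of $H_1(M)$, while the paper's induction stays entirely within the cut-and-glue Mayer--Vietoris framework and needs only Lemma~\ref{lemm:3-5}. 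The treatment of the products is the same in both.
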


\begin{proof}
We will prove the proposition by induction on $b_1$. When $b_1=0$, the proposition is trivial; so we may assume $b_1\ge 1$. Let $Z$ and $M'$
be as before.  Since $b_1(M')=b_1-1$, there are folds $Z_1, \dots,
Z_{b_1-1}$ in $M'$ such that $Z_1,\dots,Z_{b_1-1}$ freely generate
$H_{2n-1}(M')$ by induction assumption.  The folds
$Z_1,\dots,Z_{b_1-1}$ are naturally embedded in $M$ and we will
prove that these folds together with $Z$ freely generate
$H_{2n-1}(M)$.

We consider the Mayer-Vietoris exact sequence for the triple
$(M,\tM,Z\times [-1,1])$:
\[
\begin{split}
0&\to H_{2n}(M)\xrightarrow{\partial_*} H_{2n-1}(Z_+\!\cup\! Z_-) \xrightarrow{{\iota_1}_*\oplus {\iota_2}_*} H_{2n-1}(\tM)\oplus H_{2n-1}(Z\!\times\! [-1,1])\\
  &\to H_{2n-1}(M)\xrightarrow{\partial_*} H_{2n-2}(Z_+\!\cup\! Z_-) \xrightarrow{{\iota_1}_*\oplus {\iota_2}_*} H_{2n-2}(\tM)\oplus H_{2n-2}(Z\!\times\! [-1,1])
\end{split}
\]
where $\iota_1$ and $\iota_2$ are the inclusions. Since
$\iota_1^*\colon H^{2n-2}(\tM)\to H^{2n-2}(Z_+\cup Z_-)$ is
surjective by Lemma~\ref{lemm:3-5}, ${\iota_1}_*\colon
H_{2n-2}(Z_+\cup Z_-)\to H_{2n-2}(\tM)$ is injective when tensored
with $\Q$. However, $H^*(Z)$ has no torsion in odd degrees because
$H^{2i-1}(Z)$ is a subgroup of $H^{2i-2}(B)$ for every $i$ by
\eqref{eq:3-4} and $H^*(B)$ is torsion free.  Therefore, $H_*(Z)$
has no torsion in even degrees.  Therefore, ${\iota_1}_*\colon
H_{2n-2}(Z_+\cup Z_-)\to H_{2n-2}(\tM)$ is injective without
tensoring with $\Q$ and hence the above exact sequence reduces to
this short exact sequence:
\[
\begin{split}
0&\to H_{2n}(M)\xrightarrow{\partial_*} H_{2n-1}(Z_+\cup Z_-) \xrightarrow{{\iota_1}_*\oplus {\iota_2}_*} H_{2n-1}(\tM)\oplus H_{2n-1}(Z\times [-1,1])\\
  &\to H_{2n-1}(M)\to 0.
  \end{split}
\]
Noting $\partial_*([M])=[Z_+]-[Z_-]$ and
${\iota_2}_*([Z_\pm])=[Z]$, one sees that the above short exact
sequence implies an isomorphism
\begin{equation} \label{eq:3-6}
\iota_*\colon H_{2n-1}(\tM)\cong H_{2n-1}(M)
\end{equation}
where $\iota\colon \tM\to M$ is the inclusion map.

Consider the Mayer-Vietoris exact sequence for
$(M',\tM, N_+\cup N_-)$:
\[
\begin{split}
0&\to H_{2n}(M')\xrightarrow{\partial'_*} H_{2n-1}(Z_+\cup Z_-) \xrightarrow{{\iota_1}_*\oplus {\iota_3}_*} H_{2n-1}(\tM)\oplus H_{2n-1}(N_+\cup N_-)\\
  &\to H_{2n-1}(M')\xrightarrow{\partial'_*} H_{2n-2}(Z_+\cup Z_-) \xrightarrow{{\iota_1}_*\oplus {\iota_3}_*} H_{2n-2}(\tM)\oplus H_{2n-2}(N_+\cup N_-)
\end{split}
\]
where $\iota_3$ is the inclusion map of the unit sphere bundle in $N_+\cup N_-$. Note that $H_{2n-1}(N_+\cup
N_-)=H_{2n-1}(B_+\cup B_-)=0$ and ${\iota_1}_*\colon
H_{2n-2}(Z_+\cup Z_-)\to H_{2n-2}(\tM)$ is injective as observed
above.  Therefore, the above exact sequence reduces to this short
exact sequence:
\[
0\to H_{2n}(M')\xrightarrow{\partial'_*} H_{2n-1}(Z_+\cup Z_-) \xrightarrow{{\iota_1}_*} H_{2n-1}(\tM)
  \xrightarrow{\iota_*} H_{2n-1}(M')\to 0.
\]
Here $\partial_*([M])=[Z_+]-[Z_-]$ and $H_{2n-1}(M')$ is freely
generated by $Z_1$, \dots, $Z_{b_1-1}$ by induction assumption.
Therefore, the above short exact sequence implies that
\begin{equation*}
\text{$H_{2n-1}(\tM)$ is freely generated by $Z_1,\dots,Z_{b_1-1}$ and $Z_+$ (or $Z_-$).}
\end{equation*}
This together with \eqref{eq:3-6} completes the induction step and
proves the lemma.
\end{proof}

\section{Relations between Betti numbers and face numbers} \label{sectBettiFace}

Let $M$ be an orientable toric origami manifold of dimension $2n$
$(n\ge 2)$ such that every proper face of $M/T$ is acyclic.  In
this section we describe $b_{2i}(M)$ in terms of the face
numbers of $M/T$ and $b_1(M)$.  Let $\P$ be the simplicial poset
dual to $\partial(M/T)$.  As usual, we define
\[
\begin{split}
f_i&=\text{ the number of $(n-1-i)$-faces of $M/T$}\\
&=\text{ the number of $i$-simplices in $\P$}\quad \text{for $i=0,1,\dots,n-1$}
\end{split}
\]
and the $h$-vector $(h_0,h_1,\dots,h_n)$ by
\begin{equation} \label{eq:4-1}
\sum_{i=0}^nh_it^{n-i}=(t-1)^n+\sum_{i=0}^{n-1}f_i(t-1)^{n-1-i}.
\end{equation}

\begin{theorem} \label{theo:4-1}
Let $M$ be an orientable toric origami manifold of dimension $2n$
such that every proper face of $M/T$ is acyclic. Let $b_j$ be the
$j$th Betti number of $M$ and $(h_0,h_1,\dots,h_n)$ be the
$h$-vector of $M/T$.  Then
\[
\sum_{i=0}^nb_{2i}t^i=\sum_{i=0}^nh_it^i+b_1(1+t^n-(1-t)^n),
\]
in other words, $b_0=h_0=1$ and
\[
\begin{split}
b_{2i}&=h_i-(-1)^i\binom{n}{i}b_1\quad\text{for $1\le i\le n-1$},\\
b_{2n}&=h_n+(1-(-1)^n)b_1.
\end{split}
\]
\end{theorem}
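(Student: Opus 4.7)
The plan is to prove Theorem~\ref{theo:4-1} by induction on $b_1=b_1(M)$. In the base case $b_1=0$, the template graph $G$ is a tree, so $M/T$ and all its proper faces are contractible, and the description of $H^*(M)$ cited in the introduction from \cite{ma-pa06, ho-pi12} gives $P(M)(t)=\sum_{i=0}^n h_i t^i$, which matches the claimed formula since $b_1=0$.

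For the inductive step, I would choose an edge $e$ of $G$ in a non-trivial cycle and perform the surgery of Section~\ref{sectBettiNumbers} to produce $M'$ with $b_1(M')=b_1-1$ and symplectic toric manifold $B$ of dimension $2(n-1)$ associated to the removed fold facet $F$; one must first verify that acyclicity of proper faces is preserved for $M'/T$ so that the inductive hypothesis applies. Writing $P(M)(t)=\sum_{i=0}^n b_{2i}(M)t^i$ and $B(t)=\sum_{j=0}^{n-1}h_j^F t^j$, where $h_j^F=b_{2j}(B)$ is the $h$-vector of $F$, Lemma~\ref{lemm:3-7} together with Poincar\'e duality packages into the generating-function identity
\[
P(M')(t)-P(M)(t)=(1+t)B(t)-1-t^n,
\]
and the central combinatorial claim is
\[
H(M'/T)(t)-H(M/T)(t)=(1+t)B(t)-(1-t)^n.
\]
Combining these two identities with the inductive hypothesis $P(M')=H(M'/T)+(b_1-1)(1+t^n-(1-t)^n)$ yields $P(M)=H(M/T)+b_1(1+t^n-(1-t)^n)$, and the individual Betti-number formulas follow by extracting coefficients using $(1-t)^n=\sum_i(-1)^i\binom{n}{i}t^i$.

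The main obstacle is establishing the $h$-vector identity. My approach would be to track how the face structure of the orbit space changes under the surgery: the fold facet $F$ becomes two boundary facets $F_\pm$ of $M'/T$, each codim-$i$ face of $F$ for $i\ge 1$ contributes two new faces to $\partial(M'/T)$ (one on each of $F_\pm$), and the intrinsic boundary facets of $M/T$ passing through fold endpoints are subdivided. Writing the resulting face-number increments $f_i(M'/T)-f_i(M/T)$ as polynomials in $f_j^F$, then substituting into \eqref{eq:4-1} for both $M/T$ and $M'/T$ together with the analogous relation $\sum_j h_j^F t^{n-1-j}=(t-1)^{n-1}+\sum_{j=0}^{n-2}f_j^F(t-1)^{n-2-j}$ for $F$ (after the change of variable $t\mapsto 1/t$ and the appropriate rescaling), the identity reduces to the elementary factorization $(t+1)(t-1)^{n-1}-2(t-1)^{n-1}=(t-1)^n$. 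The bookkeeping is delicate because fold endpoints of $M/T$ are smooth boundary points that become genuine corners of $M'/T$, so the intrinsic manifold-with-corners face structure must be used throughout rather than the naive polytope face count.
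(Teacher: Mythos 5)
Your proposal is correct and follows essentially the same route as the paper: induction on $b_1$, the base case from \cite{ma-pa06,ho-pi12}, the surgery relations of Theorem~\ref{theo:3-1} repackaged as $P(M')-P(M)=(1+t)B(t)-1-t^n$, and your ``central combinatorial claim'' is precisely Lemma~\ref{lemm:4-4}, proved there by the face count $f_i(M'/T)=f_i(M/T)+2f_{i-1}(F)+f_i(F)$. The only point your sketch leaves implicit is the justification of that count: one needs the observation extracted from the proof of Lemma~\ref{lemm:3-5} (using the acyclicity hypothesis and the normal-cone argument) that no face of $M'/T$ meets both $F_+$ and $F_-$, so that each face of $M/T$ crossing the fold splits into exactly two faces after the cut.
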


\begin{remark}
Since every
proper face of $M/T$ is acyclic, we have $h_n=(-1)^n+\sum_{i=0}^{n-1}(-1)^{n-1-i}f_i$ by \eqref{eq:4-1}
and $\chi(\partial(M/T))=\sum_{i=0}^{n-1}(-1)^if_i$.  Therefore,
$h_n=(-1)^n-(-1)^n\chi(\partial(M/T))$.  Since $b_{2n}=1$, it
follows from the last identity in Theorem~\ref{theo:4-1} that
\[
\chi(\partial(M/T))-\chi(S^{n-1})=((-1)^n-1)b_1.
\]
Moreover, since $b_{2i}=b_{2n-2i}$, we have
\[
\begin{split}
h_{n-i}-h_i&=(-1)^i((-1)^n-1)b_1\binom{n}{i}\\
&=(-1)^i(\chi(\partial(M/T))-\chi(S^{n-1}))\binom{n}{i} \quad\text{for $0\le i\le n$}.
\end{split}
\]
These are generalized Dehn-Sommerville relations for
$\partial(M/T)$ (or for the simplicial poset $\P$), see \cite[p.
74]{stan96} or \cite[Theorem 7.44]{bu-pa02}.
\end{remark}

We will use the notations in Section~\ref{sectBettiNumbers} freely.
For a manifold $Q$ of dimension $n$ with corners (or faces), we
define the $f$-polynomial and $h$-polynomial of $Q$ by
\[
f_Q(t)=t^n+\sum_{i=0}^{n-1}f_i(Q)t^{n-1-i},\qquad h_Q(t)=f_Q(t-1)
\]
as usual.

\begin{lemma} \label{lemm:4-4}
The $h$-polynomials of $M'/T$, $M/T$, and $F$ have the relation
$h_{M'/T}(t) =h_{M/T}(t)+(t+1)h_F(t)-(t-1)^n$. Therefore
$$t^nh_{M'/T}(t^{-1})
=t^nh_{M/T}(t^{-1})+(1+t)t^{n-1}h_F(t^{-1})-(1-t)^n.$$
\end{lemma}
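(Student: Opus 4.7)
The plan is to compare the face numbers of $M/T$ and $M'/T$ directly. Since $M'/T$ is obtained from $M/T$ by un-gluing the single folded facet $F=\PsiE(e)$, the two spaces have identical face structure away from $F$, so the whole computation reduces to a local face count near $F$.

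First I would describe the face structure near $F$. Let $P_+$ and $P_-$ be the two Delzant polytopes at the endpoints of $e$, so that $F$ is a facet of both. The defining property of an origami template provides an open neighborhood $U$ of $F$ in which $P_+$ and $P_-$ literally coincide. Therefore in $M/T$ the interior of $F$ lies in the interior of the manifold-with-corners (two normal half-lines combine to a full line); more generally, for $0\le d\le n-2$ every dim-$d$ face $K$ of $F$ is not itself a face of $M/T$, but sits in the interior of a dim-$(d+1)$ face of $M/T$ formed by gluing the unique dim-$(d+1)$ face of $P_+$ transverse to $F$ at $K$ to its counterpart in $P_-$. Uniqueness of this transverse face comes from simplicity (one drops $F$ from the facet presentation of $K$), and the gluing is well defined because $P_+=P_-$ inside $U$.

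Next I would tally the change in face numbers. Un-gluing $F$ creates two boundary facets $F_+\subset P_+$ and $F_-\subset P_-$, so each dim-$d$ face of $F$ (for $0\le d\le n-1$) contributes two new dim-$d$ faces to $M'/T$; moreover each merged transverse pair splits into two separate dim-$d$ faces (for $1\le d\le n-1$), indexed by the dim-$(d-1)$ faces of $F$. Packaging these contributions into generating polynomials yields
\[
f_{M'/T}(t)-f_{M/T}(t)=2f_F(t)+t\bigl(f_F(t)-t^{n-1}\bigr)=(t+2)f_F(t)-t^n.
\]
Substituting $t\mapsto t-1$ gives the first displayed identity of the lemma, and the second identity follows by substituting $t\mapsto t^{-1}$ and multiplying through by $t^n$, using that $\deg h_F=n-1$ so that $t^{n-1}h_F(t^{-1})$ remains a polynomial.

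The main obstacle is the first step: correctly identifying faces of $M/T$ with equivalence classes of faces of $P_{\pm}$, and in particular establishing the canonical pairing used to merge transverse faces across $F$. I would handle this by working entirely inside the neighborhood $U$, where $P_+$ and $P_-$ are literally the same polytope and the bijection between their faces meeting $F$ is tautological.
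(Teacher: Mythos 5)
Your strategy is the same as the paper's: reduce the lemma to the face-number relation $f_i(M'/T)=f_i(M/T)+2f_{i-1}(F)+f_i(F)$, equivalently $f_{M'/T}(t)=f_{M/T}(t)+(t+2)f_F(t)-t^n$, and then substitute $t\mapsto t-1$ and $t\mapsto t^{-1}$; your polynomial bookkeeping at that stage is correct. The gap is in how you justify the face count. You claim the splitting of the merged transverse faces is a tautology that can be checked ``entirely inside the neighborhood $U$''. But faces of $M/T$ and $M'/T$ are closures of global strata, and the local picture near $F$ does not control them: a single face $L$ of $M/T$ may contain several faces of $F$ (so your indexing by $\dim$-$(d-1)$ faces of $F$ need not be injective), and $L$ may run through the other fold facets of the template and return to $F$, in which case cutting along $F$ does not produce two faces per face of $F$. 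The count genuinely fails without a global hypothesis: take the template consisting of two unit squares with the two left edges glued and the two right edges glued (a legitimate template whose graph is a $2$-cycle). Then $M/T$ is a cylinder whose two facets are circles containing no vertices, so $f_0(M/T)=2$ and $f_1(M/T)=0$, while cutting along one fold gives a rectangle with $f_0(M'/T)=4$; your formula predicts $f_0(M'/T)=2+2+2=6$. This template is of course excluded by the standing acyclicity hypothesis, but that is precisely the point: the face-number relation uses that hypothesis and is not a consequence of the local structure of $U$.

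The paper supplies the missing global ingredient via the observation made in the proof of Lemma~\ref{lemm:3-5}: no face of $M'/T$ intersects both $F_+$ and $F_-$. That observation is proved there using the acyclicity of proper faces of $M/T$ together with a comparison of normal cones of faces of $P_+$ and $P_-$ (exploiting that the two polytopes coincide near $F$), and only with it in hand can one conclude that the faces of $M/T$ meeting $F$ correspond to faces of $F$ and split into exactly two faces of $M'/T$, one on each side, which is the content of the displayed relation. To complete your argument you should quote or reprove that observation (or an equivalent global statement) rather than appeal to the tautological identification inside $U$.
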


\begin{proof}
In the proof of Lemma~\ref{lemm:3-5} we observed that no facet of
$M'/T$ intersects with both $F_+$ and $F_-$.  This means that no
face of $M'/T$ intersects with both $F_+$ and $F_-$ because every
face of $M'/T$ is contained in some facet of $M'/T$.  Noting this
fact, one can find that
\[
f_i(M'/T)=f_i(M/T)+2f_{i-1}(F)+f_i(F)\quad\text{for $0\le i\le n-1$}
\]
where $F$ is the folded facet and $f_{n-1}(F)=0$.
Therefore,
\[
\begin{split}
f_{M'/T}(t)&=t^n+\sum_{i=0}^{n-1}f_i(M'/T)t^{n-1-i}\\
&=t^n+\sum_{i=0}^{n-1}f_i(M'/T)t^{i}+2\sum_{i=0}^{n-1}f_{i-1}(F)t^{n-1-i}+\sum_{i=0}^{n-2}f_i(F)t^{n-1-i}\\
&=f_{M/T}(t)+2f_F(t)+tf_F(t)-t^n.
\end{split}
\]
Replacing $t$ by $t-1$ in the identity above, we obtain the former
identity in the lemma. Replacing $t$ by $t^{-1}$ in the former
identity and multiplying the resulting identity by $t^n$, we
obtain the latter identity.
\end{proof}

\begin{proof}[Proof of Theorem~\ref{theo:4-1}]
Since $\sum_{i=0}^nh_i(M/T)t^i=t^nh_{M/T}(t^{-1})$,
Theorem~\ref{theo:4-1} is equivalent to
\begin{equation} \label{eq:4-8}
\sum_{i=0}^nb_{2i}(M)t^i=t^nh_{M/T}(t^{-1}) +b_1(M)(1+t^n-(1-t)^n).
\end{equation}
We shall prove \eqref{eq:4-8} by induction on $b_1(M)$.  The
identity \eqref{eq:4-8} is well-known when $b_1(M)=0$.  Suppose
that $k=b_1(M)$ is a positive integer and the identity
\eqref{eq:4-8} holds for $M'$ with $b_1(M')= k-1$.  Then
\[
\begin{split}
&\sum_{i=0}^nb_{2i}(M)t^i\\
=&1+t^n+\sum_{i=1}^{n-1} (b_{2i}(M')-b_{2i}(B)-b_{2i-2}(B))t^i\quad \text{(by Theorem~\ref{theo:3-1})}\\
=&\sum_{i=0}^nb_{2i}(M')t^i-(1+t)\sum_{i=0}^{n-1}b_{2i}(B)t^i+1+t^n\\
=&t^nh_{M'/T}(t^{-1})+b_1(M')(1+t^n-(1-t)^n)-(1+t)t^{n-1}h_{F}(t^{-1})+1+t^n\\
&\qquad\qquad\qquad\qquad\qquad\qquad\qquad\text{(by \eqref{eq:4-8} applied to $M'$)}\\
=&t^nh_{M/T}(t^{-1})+b_1(M)(1+t^n-(1-t)^n) \\
&\qquad\qquad\qquad\qquad\text{(by Lemma~\ref{lemm:4-4} and $b_1(M')=b_1(M)-1$),}
\end{split}
\]
proving \eqref{eq:4-8} for $M$. This completes the induction step
and the proof of Theorem~\ref{theo:4-1}.
\end{proof}

\section{Equivariant cohomology and face ring} \label{sectEquivar}

A torus manifold $M$ of dimension $2n$ is an orientable connected
closed smooth manifold with an effective smooth action of an
$n$-dimensional torus $T$ having a fixed point (\cite{ha-ma03}).
An orientable toric origami manifold with acyclic proper faces in
the orbit space has a fixed point, so it is a torus manifold. The
action of $T$ on $M$ is called \emph{locally standard} if every
point of $M$ has a $T$-invariant open neighborhood equivariantly
diffeomorphic to a $T$-invariant open set of a faithful
representation space of $T$.  Then the orbit space $M/T$ is a nice
manifold with corners\footnote{Faces of $M/T$ are defined using types of isotropy subgroups of the $T$-action on $M$.  The vertices in $M/T$ correspond to the $T$-fixed points in $M$ and \emph{nice} means that there are exactly $n$ facets (i.e., codimension-one faces) meeting at each vertex in $M/T$. \emph{A nice manifold with corners} is often called \emph{a manifold with faces}.}. The torus action on an orientable toric
origami manifold is locally standard. In this section, we study
the equivariant cohomology of a locally standard torus manifold
with acyclic proper faces of the orbit space.

We review some facts from \cite{ma-pa06}. Let $Q$ be a nice
manifold with corners of dimension $n$.  Let $\kr$ be a
ground commutative ring with unit. We denote by $G\vee H$ the unique minimal face of $Q$ that contains both
$G$ and $H$. The face ring $\kr[Q]$ of $Q$
is a graded ring defined by
\[
\kr[Q]:=\kr[v_F:\text{ $F$ a face}]/I_Q
\]
where $\deg v_F=2\codim F$ and $I_Q$ is the ideal generated by all
elements
\[
v_Gv_H-v_{G\vee H}\sum_{E\in G\cap H}v_E.
\]
The dual poset of the face poset of $Q$ is a
simplicial poset of dimension $n-1$ and its face ring over $\kr$
(see \cite[p.113]{stan96}) agrees with $\kr[Q]$. For each vertex
$p\in Q$, the restriction map $s_p$ is defined as the quotient map
\[
s_p\colon \kr[Q]\to \kr[Q]/(v_F:\ p\notin F)
\]
and it is proved in \cite[Proposition 5.5]{ma-pa06} that the image
$s_p(\kr[Q])$ is the polynomial ring
$\kr[v_{Q_{i_1}},\dots,v_{Q_{i_n}}]$ where $Q_{i_1},\dots,Q_{i_n}$
are the $n$ different facets containing $p$.

\newpage

\begin{lemma}[Lemma 5.6 in \cite{ma-pa06}] \label{lemm:1-1}
If every face of $Q$ has a vertex, then the sum $s=\oplus_{p}s_p$
of restriction maps over all vertices $p\in Q$ is a monomorphism
from $\kr[Q]$ to the sum of polynomial rings.
\end{lemma}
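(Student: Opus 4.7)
The plan is to work in an explicit $\kr$-module basis of $\kr[Q]$ coming from its description as the face ring of a simplicial poset, and then trace the action of $s = \oplus_p s_p$ in that basis. The dual of the face poset of $Q$ is a simplicial poset $\P$ of dimension $n-1$; by a theorem of Stanley \cite[Ch.~III]{stan96} the face ring $\kr[\P]=\kr[Q]$ admits a $\kr$-basis consisting of $1$ together with monomials $v_{F_1}^{a_1}v_{F_2}^{a_2}\cdots v_{F_k}^{a_k}$ indexed by descending chains of proper faces $F_1\supsetneq F_2\supsetneq\cdots\supsetneq F_k$ of $Q$ and positive integer exponents $a_i\geq 1$. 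I will call these the standard monomials.

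Next I would compute the image of a standard monomial under $s_p$. Iterating the defining relation $v_Gv_H = v_{G\vee H}\sum_{E\in G\cap H}v_E$ and using that $s_p$ kills $v_E$ for every component $E$ of an intersection not passing through $p$, one shows by induction on $\codim F$ that $s_p(v_F) = \prod_{Q_{i_j}\supseteq F} v_{Q_{i_j}}$ when $p\in F$, and $s_p(v_F) = 0$ otherwise. Consequently, for a standard monomial $m = v_{F_1}^{a_1}\cdots v_{F_k}^{a_k}$ with $p\in F_k$, the image is $s_p(m) = \prod_{j=1}^n v_{Q_{i_j}}^{c_j(m)}$ in $s_p(\kr[Q])=\kr[v_{Q_{i_1}},\dots,v_{Q_{i_n}}]$, where $c_j(m) = \sum_{i : Q_{i_j}\supseteq F_i} a_i$. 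Because $F_1\supset\cdots\supset F_k$, the sets $\{j : Q_{i_j}\supseteq F_i\}$ are nested in $i$, and the tuple $(c_j(m))_j$ determines the chain $(F_i)$ and the exponents $(a_i)$ uniquely: the support of $(c_j(m))$ recovers $F_k$ as the intersection of the corresponding facets through $p$, its minimum positive value equals $a_k$, and the rest of the chain peels off by induction. Hence $s_p$ is injective on the $\kr$-span of standard monomials whose smallest face contains $p$.

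Finally, suppose $x\in\ker s$ is nonzero, and write $x=\sum c_m m$ in the standard basis. Pick any $m_0$ with $c_{m_0}\neq 0$; the hypothesis that every face of $Q$ has a vertex supplies a vertex $p$ in the smallest face of the chain of $m_0$ (or in $Q$ itself when $m_0=1$), so that $p$ lies in every face of that chain and $s_p(m_0)\neq 0$. Only those $m$ whose smallest face contains $p$ contribute to $s_p(x)$, and by the injectivity established in the previous step, $s_p(x)$ is a nontrivial $\kr$-linear combination of pairwise distinct monomials in $s_p(\kr[Q])$, hence nonzero, contradicting $x\in\ker s$. The main obstacle is establishing the formula $s_p(v_F)=\prod_{Q_{i_j}\supseteq F}v_{Q_{i_j}}$: it must be unpacked by induction on $\codim F$, at each step using the nice-manifold-with-corners structure at $p$ to single out the unique component of the relevant intersection that passes through $p$. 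Once this is in hand, the reconstruction of $(F_i,a_i)$ from $(c_j(m))$ and the final contradiction are essentially combinatorial bookkeeping.
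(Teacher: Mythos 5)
Your proof is correct; note that the paper itself gives no argument for this lemma but simply imports it from \cite{ma-pa06}, and your route --- writing an element in terms of monomials on towers of faces $F_1\supsetneq\cdots\supsetneq F_k$, computing $s_p(v_F)=\prod_{Q_{i_j}\supseteq F}v_{Q_{i_j}}$ for a vertex $p$ of the smallest face via the local standard corner structure, and observing that distinct tower monomials through $p$ restrict to distinct monomials of $\kr[v_{Q_{i_1}},\dots,v_{Q_{i_n}}]$ --- is essentially the argument of the cited source. (In fact you only need the spanning half of Stanley's chain-monomial statement, since your computation of $s_p$ re-proves their linear independence.)
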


In particular, $\kr[Q]$ has no nonzero nilpotent element if every
face of $Q$ has a vertex.  It is not difficult to see that every
face of $Q$ has a vertex if every proper face of $Q$ is acyclic.

Let $M$ be a locally standard torus manifold. Then the orbit space
$M/T$ is a nice manifold with corners. Let $q\colon M\to M/T$ be
the quotient map.  Note that $M^\circ:=M-q^{-1}(\partial(M/T))$ is the
$T$-free part.  The projection $ET\times M\to M$ induces a map
$\bar q\colon ET\times_T M\to M/T$, where $ET$ denotes the total space of the universal principal $T$-bundle and $ET\times_T M$ denotes the orbit space of $ET\times M$ by the diagonal action of $T$ on $ET\times M$.  Similarly we have a map $\bar
q^\circ\colon ET\times_T M^\circ\to M^\circ/T$. The exact sequence
of the equivariant cohomology groups for the pair $(M,M^\circ)$ together
with the maps $\bar q$ and $\bar q^\circ$ produces the following
commutative diagram:
\begin{equation*}
\begin{CD}
H^*_T(M,M^\circ)@>\eta^*>> H^*_T(M)@>{\iota^*}>> H^*_T(M^\circ)\\
@. @A\bar q^*AA @AA{(\bar q^\circ)}^* A\\
@. H^*(M/T) @>\bar\iota^*>> H^*(M^\circ/T)
\end{CD}
\end{equation*}
where $\eta$, $\iota$ and $\bar\iota$ are the inclusions and $H^*_T(X,Y):=H^*(ET\times_T X,ET\times_TY)$ for a $T$-space $X$ and its $T$-subspace $Y$ as usual. Since
the action of $T$ on $M^\circ$ is free and $\bar\iota\colon M^\circ/T\to M/T$
is a homotopy equivalence, we have graded ring isomorphisms
\begin{equation} \label{eq:1-0-0-0}
H^*_T(M^\circ)\xrightarrow{((\bar q^\circ)^*)^{-1}} H^*(M^\circ/T)\xrightarrow{(\bar\iota^*)^{-1}} H^*(M/T)
\end{equation}
and the composition $\rho:=\bar q^*\circ (\bar\iota^*)^{-1}\circ
((\bar q^\circ)^*)^{-1}$, which is a graded ring homomorphism, gives
the right inverse of $\iota^*$, so the exact sequence above
splits. Therefore, $\eta^*$ and $\bar q^*$ are both injective and
\begin{equation} \label{eq:1-0-0}
H^*_T(M)= \eta^*(H^*_T(M,M^\circ))\oplus \rho(H^*_T(M^\circ)) \quad\text{as graded groups}.
\end{equation}
Note that both factors at the right hand side above are graded
subrings of $H^*_T(M)$ because $\eta^*$ and $\rho$ are both graded
ring homomorphisms.

Let $\P$ be the poset dual to the face poset of $M/T$ as before.
Then $\Z[\P]=\Z[M/T]$ by definition.

\begin{proposition} \label{prop:1-1}
Suppose every proper face of the orbit space $M/T$ is acyclic, and
the free part of the action gives a trivial principal bundle
$M^\circ\to M^\circ/T$. Then $H^*_T(M)\cong \Z[\P]\oplus \tilde H^*(M/T)$
as graded rings.
\end{proposition}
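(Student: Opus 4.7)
The plan is to use the graded-group splitting~\eqref{eq:1-0-0},
\[
H^*_T(M)=\eta^*(H^*_T(M,M^\circ))\oplus\rho(H^*_T(M^\circ)),
\]
in which the first summand is an ideal and the second is a unital graded subring, and to identify each summand separately. The second one is immediate: the chain~\eqref{eq:1-0-0-0} is a chain of graded ring isomorphisms, so $\rho(H^*_T(M^\circ))\cong H^*(M/T)$ as graded rings.

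For the ideal $\eta^*(H^*_T(M,M^\circ))$, I would introduce the Thom-class map $\Phi\colon \Z[\P]\to H^*_T(M)$, $v_F\mapsto \tau_F$, where $\tau_F\in H^{2\codim F}_T(M)$ is the equivariant Poincar\'e dual of the facial submanifold $q^{-1}(F)$. The face-ring relations hold among the $\tau_F$ by the local computation in a chart of the locally standard action (identical to the contractible case treated by Masuda--Panov~\cite{ma-pa06}), so $\Phi$ is a graded ring homomorphism, and since each $\tau_F$ restricts to zero on $M^\circ$, $\Phi$ carries $\Z[\P]_{>0}$ into $\eta^*(H^*_T(M,M^\circ))$. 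The crucial claim is that the induced map $\Phi|_{\Z[\P]_{>0}}\colon \Z[\P]_{>0}\xrightarrow{\cong}\eta^*(H^*_T(M,M^\circ))$ is an isomorphism. I would prove this via a Mayer--Vietoris argument on a cover of $M$ by $T$-invariant neighborhoods of preimages of vertices of $M/T$: each such piece is equivariantly standard with its equivariant cohomology a polynomial ring in the local $\tau_F$, and these local face rings patch into the global $\Z[\P]$ by Lemma~\ref{lemm:1-1}. The acyclicity of every proper face of $M/T$ forces the Mayer--Vietoris spectral sequence to collapse, and the triviality of $M^\circ\to M^\circ/T$ removes potential monodromy from the overlaps meeting the free part.

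Combining the two identifications yields $H^*_T(M)\cong \Z[\P]_{>0}\oplus H^*(M/T)\cong \Z[\P]\oplus\tilde H^*(M/T)$ as graded groups (moving the degree-$0$ copy of $\Z$ from the second summand to the first). The ring structure matches: $\Phi$ and $\rho$ are ring maps onto their images, and the cross-products $\tau_F\cdot\rho(h)$ for $h\in\tilde H^*(M/T)$ vanish, because writing $\rho(h)=\bar q^*(\alpha)$ with $\alpha\in\tilde H^*(M/T)$ the restriction $\alpha|_F\in\tilde H^*(F)$ is zero by acyclicity of the proper face $F$, so $\bar q^*(\alpha)$ vanishes on the support of $\tau_F$. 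The main obstacle is the isomorphism in the second paragraph: controlling the Mayer--Vietoris gluing to produce exactly $\Z[\P]$ and no more is where the two hypotheses (acyclicity of proper faces, triviality of the free-part bundle) are both genuinely needed; once that is in hand, the remainder is bookkeeping.
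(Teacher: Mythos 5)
Your overall frame coincides with the paper's up to the point where the real work starts: the splitting \eqref{eq:1-0-0}, the identification $\rho(H^*_T(M^\circ))\cong H^*(M/T)$ via \eqref{eq:1-0-0-0}, and the reduction to identifying $\eta^*(H^*_T(M,M^\circ))$ with $\Z[\P]_{>0}$ are exactly the paper's steps. (Your cross-term argument is a genuinely different and appealing alternative to the paper's: you kill $\tau_F\cup\bar q^*(\alpha)$ by the projection formula and acyclicity of $F$, whereas the paper observes that $a\cup b$ lies in $\eta^*(H^*_T(M,M^\circ))\cong\Z[\P]_{>0}$, is nilpotent since $H^m(M/T)=0$ for large $m$, and $\Z[\P]$ has no nonzero nilpotents by Lemma~\ref{lemm:1-1}.) The genuine gap is precisely the step you defer: the claim that $v_F\mapsto\tau_F$ induces an isomorphism $\Z[\P]_{>0}\xrightarrow{\cong}\eta^*(H^*_T(M,M^\circ))$ is the entire content of the proposition, and your Mayer--Vietoris sketch does not establish it. As stated it is also problematic: $T$-invariant neighborhoods of the preimages of vertices that are ``equivariantly standard with polynomial equivariant cohomology'' cannot cover $M$ when $M/T$ has the homotopy type of a wedge of circles (the free part away from the boundary is missed, and enlarging the charts destroys the polynomial description), no mechanism is given for why acyclicity of proper faces forces the spectral sequence to collapse, and no identification of the limit with the face ring is offered. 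The paper's proof is different and is where both hypotheses actually do their work: triviality of $M^\circ\to M^\circ/T$ gives $M-M^\circ=M_R-M^\circ_R$ for the model $M_R=R\times T/\!\sim$ over the cone $R$ on $\partial(M/T)$, so by excision $H^*_T(M,M^\circ)\cong H^*_T(M_R,M^\circ_R)\cong H^*_T(M_R)$ in positive degrees; then a face-preserving degree-one map to the model $M_P$ over the cone on $|\P|$, for which $H^*_T(M_P)\cong\Z[\P]$ by the Davis--Januszkiewicz-type computation, induces an isomorphism on equivariant cohomology because every proper face is acyclic (the Theorem~8.3 argument of \cite{ma-pa06}). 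Nothing in your sketch substitutes for these steps.

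A secondary gap: you assert that $\Phi\colon v_F\mapsto\tau_F$ is a ring homomorphism into $H^*_T(M)$ ``by the local computation in a chart.'' The face-ring relations among the $\tau_F$ are global statements; the argument available from \cite{ma-pa06} (restriction to fixed points) yields them only modulo $H^*(BT)$-torsion, which is why the map \eqref{eq:1-4} in the paper takes values in the quotient by torsion. Under the present hypotheses the torsion submodule is exactly $\bar q^*(\tilde H^*(M/T))$ (Proposition~\ref{prop:1-2}), so a priori your relations could fail by correction terms in that summand; that they hold on the nose is a consequence of the splitting together with Proposition~\ref{prop:1-2}, whose proof in turn relies on Proposition~\ref{prop:1-1}, so assuming it at the outset risks circularity. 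Granting the crucial isomorphism and the existence of the equivariant Gysin maps for the facial submanifolds, the rest of your argument (the vanishing of cross terms and the bookkeeping of the degree-zero summand) is fine.
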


\begin{proof}
Let $R$ be the cone of $\partial(M/T)$ and let $M_R=M_R(\Lambda)$
be the $T$-space $R\times T/\sim$ where we use the characteristic
function $\Lambda$ obtained from $M$ for the identification
$\sim$.  Let $M^\circ_R$ be the $T$-free part of $M_R$.  Since the
free part of the action on $M$ is trivial, we have
$M-M^\circ=M_R-M^\circ_R$. Hence,
\begin{equation} \label{eq:1-0}
H^*_T(M,M^\circ)\cong H^*_T(M_R,M^\circ_R)\quad\text{as graded rings}
\end{equation}
by excision.  Since $H^*_T(M^\circ_R)\cong H^*(M^\circ_R/T)\cong H^*(R)$
and $R$ is a cone, $H^*_T(M^\circ_R)$ is isomorphic to the cohomology
of a point.  Therefore,
\begin{equation} \label{eq:1-1}
H^*_T(M_R,M^\circ_R)\cong H^*_T(M_R) \quad\text{as graded rings in positive degrees.}
\end{equation}

On the other hand, the dual decomposition on the geometric
realization $|\P|$ of $\P$ defines a face structure on the cone
$P$ of $\P$. Let $M_P=M_P(\Lambda)$ be the $T$-space $P\times
T/\sim$ defined as before. Then a similar argument to that in
\cite[Theorem 4.8]{da-ja91} shows that
\begin{equation} \label{eq:1-2}
H^*_T(M_P)\cong \Z[\P] \quad\text{as graded rings}
\end{equation}
(this is mentioned as Proposition 5.13 in \cite{ma-pa06}).  Since
every face of $P$ is a cone, one can construct a face preserving
degree one map from $R$ to $P$ which induces an equivariant map
$f\colon M_R\to M_P$.  Then a similar argument to the proof of
Theorem 8.3 in \cite{ma-pa06} shows that $f$ induces a graded ring
isomorphism
\begin{equation} \label{eq:1-3}
f^*\colon H^*_T(M_P)\xrightarrow{\cong} H^*_T(M_R)
\end{equation}
since every proper face of $R$ is acyclic. It follows from
\eqref{eq:1-0}, \eqref{eq:1-1}, \eqref{eq:1-2} and \eqref{eq:1-3}
that
\begin{equation} \label{eq:1-3-1}
H^*_T(M,M^\circ) \cong \Z[\P] \quad\text{as graded rings in positive
degrees.}
\end{equation}

Thus, by \eqref{eq:1-0-0-0} and \eqref{eq:1-0-0} it suffices to
prove that the cup product of every $a\in \eta^*(H^*_T(M,M^\circ))$ and
every $b\in \rho(\tilde H^*_T(M^\circ))$ is trivial.  Since
$\iota^*(a)=0$ (as $\iota^*\circ \eta^*=0$), we have
$\iota^*(a\cup b)=\iota^\ast(a)\cup \iota^\ast(b)=0$ and hence $a\cup b$
lies in $\eta^*(H^*_T(M,M^\circ))$.  Since $\rho(H^*_T(M^\circ))\cong
H^*(M/T)$ as graded rings by \eqref{eq:1-0-0-0} and $H^m(M/T)=0$
for a sufficiently large $m$, $(a\cup b)^m=\pm a^m\cup b^m=0$.
However, we know that $a\cup b\in \eta^*(H^*_T(M,M^\circ))$ and
$\eta^*(H^*_T(M,M^\circ))\cong \Z[\P]$ in positive degrees by
\eqref{eq:1-3-1}.  Since $\Z[\P]$ has no nonzero nilpotent element
as remarked before, $(a\cup b)^m=0$ implies $a\cup b=0$.
\end{proof}

As discussed in \cite[Section 6]{ma-pa06}, there is a homomorphism
\begin{equation} \label{eq:1-4}
\varphi\colon \Z[\P]=\Z[M/T] \to H^*_T(M)/\text{$H^*(BT)$-torsions}.
\end{equation}
In fact, $\varphi$ is defined as follows.  For a codimension $k$
face $F$ of $M/T$,  $q^{-1}(F)=:M_F$ is a connected closed
$T$-invariant submanifold of $M$ of codimension $2k$, and
$\varphi$ assigns $v_F\in \Z[M/T]$ to the equivariant Poincar\'e
dual $\tau_F \in H^{2k}_T(M)$ of $M_F$. One can see that $\varphi$
followed by the restriction map to $H^*_T(M^T)$ can be identified
with the map $s$ in Lemma~\ref{lemm:1-1}.  Therefore, $\varphi$ is
injective if every face of $Q$ has a vertex as mentioned in
\cite[Lemma 6.4]{ma-pa06}.

\begin{proposition} \label{prop:1-2}
Let $M$ be a torus manifold with a locally standard torus action. If every proper face of
$M/T$ is acyclic and the free part of the action gives a trivial
principal bundle, then the $H^*(BT)$-torsion submodule of
$H^*_T(M)$ agrees with $\bar q^*(\tilde H^*(M/T))$, where $\bar
q\colon ET\times_T M\to M/T$ is the map mentioned before.
\end{proposition}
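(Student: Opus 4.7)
The plan is to combine the additive decomposition from Proposition~\ref{prop:1-1} with the Borel--Atiyah--Bott localization theorem. Writing $A := \eta^*(H^*_T(M,M^\circ))$ and $B := \rho(H^*_T(M^\circ))$, I first observe that $\rho = \bar q^*\circ(\bar\iota^*)^{-1}\circ((\bar q^\circ)^*)^{-1}$ together with \eqref{eq:1-0-0-0} gives $B = \bar q^*(H^*(M/T))$, so in positive degrees $B = \bar q^*(\tilde H^*(M/T))$. Since $M^T$ is finite (by local standardness), Borel localization identifies the $H^*(BT)$-torsion submodule of $H^*_T(M)$ with the kernel of the restriction $\iota^*\colon H^*_T(M)\to H^*_T(M^T)$, so the task reduces to showing $\ker\iota^* = B^{>0}$.

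The inclusion $B^{>0}\subseteq\ker\iota^*$ is immediate: for any fixed point $p\in M$, the composition $BT = ET\times_T\{p\}\hookrightarrow ET\times_T M\xrightarrow{\bar q}M/T$ is the constant map to $[p]$, so pulling back any positive-degree class in $H^*(M/T)$ yields zero in $H^*(BT) = H^*_T(\{p\})$.

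For the reverse inclusion, given $\alpha\in\ker\iota^*$ I would decompose $\alpha = a+b$ with $a\in A$ and $b\in B$, and note that $\iota^*(b)=0$ by the previous step, so $\iota^*(a)=0$; it thus suffices to prove that $\iota^*|_A$ is injective. Here I use the ring isomorphism $A\cong\Z[\P]$ of \eqref{eq:1-3-1}, under which the generator $v_F$ corresponds to the equivariant Poincar\'e dual $\tau_F$; the composition $A\hookrightarrow H^*_T(M)\xrightarrow{\iota^*}H^*_T(M^T)$ then matches the map $\varphi$ of \eqref{eq:1-4} followed by restriction to $M^T$, which by the discussion after \eqref{eq:1-4} equals the map $s=\oplus_p s_p$ of Lemma~\ref{lemm:1-1}. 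Since acyclicity of all proper faces of $M/T$ ensures every face has a vertex, Lemma~\ref{lemm:1-1} yields injectivity of $s$, forcing $a=0$ and finishing the argument. I expect the principal subtlety to lie in verifying the identification $\eta^*(v_F) = \tau_F$ under \eqref{eq:1-3-1}: this requires unpacking the chain of isomorphisms $H^*_T(M,M^\circ)\cong H^*_T(M_R,M^\circ_R)\cong H^*_T(M_R)\cong H^*_T(M_P)\cong \Z[\P]$ used in the proof of Proposition~\ref{prop:1-1} and checking its compatibility with the Poincar\'e-dual description of $\tau_F$, where the hypothesis that the free part of the action is a trivial $T$-bundle enters in applying excision.
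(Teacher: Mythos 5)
Your overall strategy is the same as the paper's: identify the $H^*(BT)$-torsion with the kernel of the restriction to $M^T$ via localization, observe that $\bar q^*(\tilde H^*(M/T))$ dies on $M^T$ because the fixed points are isolated (your "constant map" argument is exactly the paper's commutative square with $\psi^*$ and $\bar\psi^*$), and then rule out torsion in the complementary summand $A=\eta^*(H^*_T(M,M^\circ))$ using the fact that every face has a vertex. The one place where your write-up is not a proof is precisely the point you flag: you need that under the isomorphism \eqref{eq:1-3-1} the generator $v_F$ of $\Z[\P]$ corresponds to the equivariant Poincar\'e dual $\tau_F$, equivalently that the composite $\Z[\P]\cong A\hookrightarrow H^*_T(M)\to H^*_T(M^T)$ is the map $s$ of Lemma~\ref{lemm:1-1}. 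This compatibility is nowhere established in the paper: the isomorphism \eqref{eq:1-3-1} is assembled from excision and the auxiliary models $M_R$, $M_P$ together with a chosen face-preserving degree-one map $R\to P$, and what the paper actually records (after \eqref{eq:1-4}) is the corresponding statement for the geometrically defined $\varphi\colon v_F\mapsto\tau_F$ landing in $H^*_T(M)$ modulo torsion --- which does not by itself identify $\varphi$ with the inclusion of the abstract summand $A$. So as written your reverse inclusion rests on an unverified (though plausible) identification, and this is the crux rather than a side remark.

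The gap is closable without doing that verification, and this is in effect how the paper argues. You need only $A\cap\mathcal T=0$, where $\mathcal T=\ker\psi^*$ is the torsion submodule. From your first step $B^{>0}\subseteq\mathcal T$, so in each positive degree the quotient $H^*_T(M)/\mathcal T$ is a quotient of $A$, which by \eqref{eq:1-3-1} is a free abelian group of finite rank equal to that of $\Z[\P]$ in that degree; on the other hand the injectivity of $\varphi$ (every face of $M/T$ has a vertex, Lemma~\ref{lemm:1-1}) embeds $\Z[\P]$ into the same quotient degreewise. Comparing ranks degree by degree forces the kernel of $A\to H^*_T(M)/\mathcal T$, i.e. $A\cap\mathcal T$, to be a rank-zero subgroup of a free group, hence zero, which is exactly the injectivity of $\psi^*|_A$ you wanted. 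So either supply the $v_F\leftrightarrow\tau_F$ check you postponed, or replace that step by this counting argument; with one of the two in place your proof is complete and essentially coincides with the paper's. (Minor point: your $\iota^*$ for the restriction to $M^T$ clashes with the paper's $\iota\colon M^\circ\to M$; the paper calls that restriction $\psi^*$.)
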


\begin{proof} First we prove that all elements in $\bar q^*(\tilde H^*(M/T))$
are $H^*(BT)$-torsions. We consider the following commutative diagram:
\[
\begin{CD}
 H^*_T(M)@>\psi^*>>  H^*_T(M^T)\\
@A \bar q^* AA  @AA  A\\
 H^*(M/T) @>\bar\psi^*>>  H^*(M^T)
\end{CD}
\]
where the horizontal maps $\psi^*$ and $\bar \psi^*$ are
restrictions to $M^T$ and the right vertical map is the
restriction of $\bar q^*$ to $M^T$.  Since $M^T$ is isolated,
$\bar\psi^*(\tilde H^*(M/T))$ vanishes.  This together with the
commutativity of the above diagram shows that $\bar q^*(\tilde
H^*(M/T))$ maps to zero by $\psi^*$.  This means that  all elements in $\bar
q^*(\tilde H^*(M/T))$ are $H^*(BT)$-torsions because the kernel of
$\psi^*$ are $H^*(BT)$-torsions by the Localization Theorem in
equivariant cohomology.

On the other hand, since every face of $M/T$ has a vertex, the map
$\varphi$ in \eqref{eq:1-4} is injective as remarked above. Hence, by Proposition~\ref{prop:1-1}, there are no other
$H^*(BT)$-torsion elements.
\end{proof}

We conclude this section with observation on the orientability of
$M/T$.

\begin{lemma} \label{lemm:1-3}
Let $M$ be a closed smooth manifold of dimension $2n$ with a
locally standard smooth action of the $n$-dimensional torus $T$.
Then $M/T$ is orientable if and only if $M$ is.
\end{lemma}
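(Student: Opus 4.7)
The plan is to establish the equivalence by inserting the open, $T$-free locus $\Mc=M-q^{-1}(\partial(M/T))$ as an intermediate object and comparing orientations of $M$ with $\Mc$, and of $M/T$ with $\Mc/T$, and finally of $\Mc$ with $\Mc/T$ via the principal bundle structure.

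First I would check that the non-free locus $M\setminus\Mc$ has real codimension at least $2$ in $M$. Since the action is locally standard, every point of $M$ has an invariant open neighborhood equivariantly diffeomorphic to an invariant open subset of a faithful representation of $T$; in particular one may take the standard coordinate-wise action of $T^n$ on $\C^n$. In this local model the set of points with nontrivial isotropy is the union of the coordinate hyperplanes $\{z_i=0\}$, which is a closed subset of real codimension $2$. Consequently $M\setminus\Mc$ is a closed subset of $M$ of codimension $\ge 2$, and hence $M$ is orientable if and only if the open submanifold $\Mc$ is orientable.

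Next I would observe that $M/T$ is a nice manifold with corners whose interior coincides with $\Mc/T$, and that orientability of a manifold with corners is equivalent to orientability of its interior as an open topological manifold. The local model above identifies a neighborhood of any point of $M/T$ with an open subset of the orthant $\R^n_{\ge 0}$, so an orientation on the dense interior $\Mc/T$ extends uniquely over the corner strata and vice versa. Thus $M/T$ is orientable iff $\Mc/T$ is.

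Finally, I would use that $q\colon \Mc\to \Mc/T$ is a principal $T$-bundle. The structure group $T$ acts on the fiber $T$ by left translations, which are orientation-preserving, so the orientation bundle along the fibers is trivial; combined with orientability of the fiber $T\cong (S^1)^n$, this gives that $\Mc$ is orientable iff $\Mc/T$ is. Chaining the three equivalences yields the lemma.

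The main technical point is the second step: one must know that orientability of the manifold with corners $M/T$ is detected by its interior and is not obstructed by the corner strata of codimension $\ge 2$. This is the place where one genuinely uses the local standardness of the action, via the local model as an orthant in $\R^n$; the codimension one boundary strata pose no issue since orientations extend from the interior across boundary faces in the standard way.
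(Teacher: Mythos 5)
Your overall route is the same as the paper's: compare $M$ with its free part $M^\circ$, compare $M/T$ with its interior $M^\circ/T$, and then compare $M^\circ$ with $M^\circ/T$ via the free quotient. The first two reductions are fine (the paper also takes the corners statement for granted, and proves the codimension-two step by noting that $\pi_1(M^\circ)\to\pi_1(M)$ is onto, hence $H^1(M;\Z/2)\to H^1(M^\circ;\Z/2)$ is injective and $w_1$ is detected on $M^\circ$ --- this is exactly the standard fact you quote).

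The gap is in your last step. Triviality (or mere orientability) of the vertical bundle of the principal $T$-bundle $q\colon M^\circ\to M^\circ/T$ gives $w_1(M^\circ)=q^*\bigl(w_1(M^\circ/T)\bigr)$, and this proves only one direction: if $M^\circ/T$ is orientable then so is $M^\circ$. The converse --- the direction you actually need to conclude that orientability of $M$ forces orientability of $M/T$ --- requires in addition that $q^*\colon H^1(M^\circ/T;\Z/2)\to H^1(M^\circ;\Z/2)$ be injective (equivalently, that a fiberwise choice of orientation of $q^*T(M^\circ/T)$ descends to the base). Your stated hypotheses, ``orientable fiber and structure group acting by orientation-preserving translations,'' do not imply this: for example, with $K$ the Klein bottle and $\tilde K\cong T^2$ its orientation double cover, $\tilde K\times S^1\to K$ is a principal $(\Z/2\times S^1)$-bundle with orientable fiber and orientation-preserving translations, yet the total space $T^3$ is orientable while $K$ is not. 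The missing ingredient is connectedness of the fiber torus: since $q$ is a fibration with connected fiber, $\pi_1(M^\circ)\to\pi_1(M^\circ/T)$ is surjective, hence $q^*$ is injective on $H^1(\,\cdot\,;\Z/2)$, and together with $w_1(M^\circ)=q^*\bigl(w_1(M^\circ/T)\bigr)$ this gives the equivalence. The paper obtains the same injectivity by viewing $M^\circ\to M^\circ/T$ as an iterated principal $S^1$-bundle and applying the $\Z/2$-Gysin sequence in degree one. With this one point added, your argument is complete and essentially coincides with the paper's.
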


\begin{proof}
Since $M/T$ is a manifold with corners and $M^\circ/T$ is its
interior, $M/T$ is orientable if and only if $M^\circ/T$ is. On
the other hand, $M$ is orientable if and only if $M^\circ$ is.
Indeed, since the complement of $M^\circ$ in $M$ is the union of
finitely many codimension-two submanifolds, the inclusion
$\iota\colon M^\circ\to M$ induces an epimorphism on their
fundamental groups and hence on their first homology groups with
$\Z/2$-coefficients.  Then it induces a monomorphism
$\iota^*\colon H^1(M;\Z/2)\to H^1(M^\circ;\Z/2)$ since
$H^1(X;\Z/2)=\Hom(H_1(X;\Z/2);\Z/2)$.  Since
$\iota^*(w_1(M))=w_1(M^\circ)$ and $\iota^*$ is injective,
$w_1(M)=0$ if and only if $w_1(M^\circ)=0$. This means that $M$ is
orientable if and only if $M^\circ$ is.

Thus, it suffices to prove that $M^\circ/T$ is orientable if and
only if $M^\circ$ is.  But, since $M^\circ/T$ can be regarded
as the quotient of an iterated free action of $S^1$, it suffices to
prove the following general fact: for a principal $S^1$-bundle
$\pi\colon E\to B$ where $E$ and $B$ are both smooth manifolds,
$B$ is orientable if and only if $E$ is. First we note that the
tangent bundle of $E$ is isomorphic to the Whitney sum of the
tangent bundle along the fiber $\tau_fE$ and the pullback of the
tangent bundle of $B$ by $\pi$.  Since the free action of $S^1$ on
$E$ yields a nowhere zero vector field along the fibers, the line
bundle $\tau_fE$ is trivial.  Therefore
\begin{equation} \label{eq:1-5}
w_1(E)=\pi^*(w_1(B)).
\end{equation}
We consider the Gysin exact sequence for our $S^1$-bundle:
\[
\cdots\to H^{-1}(B;\Z/2)\to H^{1}(B;\Z/2)\xrightarrow{\pi^*} H^1(E,\Z/2)\to H^{0}(B;\Z/2)\to\cdots.
\]
Since $H^{-1}(B;\Z/2)=0$, the exact sequence above tells us that the map
$\pi^*\colon H^1(B;\Z/2) \to H^{1}(E;\Z/2)$ is injective.  This
together with \eqref{eq:1-5} shows that $w_1(E)=0$ if and only if
$w_1(B)=0$, proving the desired fact.
\end{proof}

\section{Serre spectral sequence} \label{sectSerreSpec}

Let $M$ be an orientable toric origami manifold $M$ of dimension
$2n$ such that every proper face of $M/T$ is acyclic. Note that
$M^\circ/T$ is homotopy equivalent to a graph, hence does not admit
nontrivial torus bundles. Thus the free part of the action gives a
trivial principal bundle $M^\circ\to M^\circ/T$, and we may apply the
results of the previous section.

We consider the Serre spectral sequence of the fibration
$\pi\colon ET\times_T M\to BT$. Since $BT$ is simply connected and
both $H^*(BT)$ and $H^*(M)$ are torsion free by
Theorem~\ref{theo:3-1}, the $E_2$-terms are given as follows:
$$E_2^{p,q}=H^p(BT;H^q(M))=H^p(BT)\otimes H^q(M).$$
Since $H^{odd}(BT)=0$ and $H^{2i+1}(M)=0$ for $1\le i\le n-2$ by
Theorem~\ref{theo:3-1},
\begin{equation} \label{eq:2-1-1}
\text{$E_2^{p,q}$ with $p+q$ odd vanishes unless $p$ is even and
$q=1$ or $2n-1$.}
\end{equation}

We have differentials
\[
\to E_r^{p-r,q+r-1}\xrightarrow{d_r^{p-r,q+r-1}}E_r^{p,q}\xrightarrow{d_r^{p,q}}E_r^{p+r,q-r+1}\to
\]
and
$$E_{r+1}^{p,q}=\ker d_r^{p,q}/\im d_r^{p-r,q+r-1}.$$
We will often abbreviate $d_r^{p,q}$ as $d_r$ when $p$ and $q$ are
clear in the context. Since
\[
d_r(u\cup v)=d_ru\cup v+(-1)^{p+q}u\cup d_rv \quad\text{for $u\in E_r^{p,q}$ and $v\in E_r^{p',q'}$}
\]
and $d_r$ is trivial on $E_r^{p,0}$ and $E_r^{p,0}=0$ for odd $p$,
\begin{equation} \label{eq:2-1-10}
\text{$d_r$ is an $H^*(BT)$-module map.}
\end{equation}
Note that $E_r^{a,b}=0$ if either $a<0$ or $b<0$. Accordingly,
\begin{equation} \label{eq:2-1-2}
\text{$E^{p,q}_r=E^{p,q}_\infty$\quad if $p<r$ and $q+1<r$.}
\end{equation}

There is a filtration of subgroups
\[
H_T^{m}(M)=\F^{0,m}\supset \F^{1,m-1}\supset \dots\supset
\F^{m-1,1}\supset \F^{m,0}\supset\F^{m+1,-1}=\{0\}
\]
such that
\begin{equation} \label{eq:2-1-3}
\F^{p,m-p}/\F^{p+1,m-p-1}= E_\infty^{p,m-p}\quad \text{for
$p=0,1,\dots,m$}.
\end{equation}

There are two edge homomorphisms.
One edge homomorphism
\[
H^p(BT)=E_2^{p,0}\to E_3^{p,0}\to \dots \to E_\infty^{p,0}\subset H^p_T(M)
\]
agrees with $\pi^*\colon H^*(BT)\to H^*_T(M)$. Since
$M^T\not=\emptyset$, one can construct a cross section of the
fibration  $\pi\colon ET\times_T M\to BT$ using a fixed point in
$M^T$. So $\pi^*$ is injective and hence
\begin{equation} \label{eq:2-0}
\text{$d_r\colon E_r^{p-r,r-1}\to E_r^{p,0}$ is trivial for every $r\ge 2$ and $p\ge 0$,}
\end{equation}
which is equivalent to $E_2^{p,0}=E_\infty^{p,0}$.
The other edge homomorphism
\[
H_T^q(M)\twoheadrightarrow E_\infty^{0,q}\subset \dots \subset E_3^{0,q}\subset E_2^{0,q}=H^q(M)
\]
agrees with the restriction homomorphism $\iota^*\colon
H^q_T(M)\to H^q(M)$. Therefore, $\iota^*$ is surjective if and
only if the differential $d_r\colon E_r^{0,q}\to E_r^{r,q-r+1}$ is
trivial for every $r\ge 2$.

We shall investigate the restriction homomorphism $\iota^*\colon
H^q_T(M)\to H^q(M)$.  Since $M/T$ is homotopy equivalent to the
wedge of $b_1(M)$ circles, $H^q_T(M)$ vanishes unless $q$ is $1$
or even by Proposition~\ref{prop:1-1} while $H^q(M)$ vanishes
unless $q$ is $1, 2n-1$ or even in between $0$ and $2n$ by
Theorem~\ref{theo:3-1}.

\begin{lemma} \label{lemm:2-1}
The homomorphism $\iota^*\colon H^1_T(M)\to H^1(M)$ is an isomorphism (so
$H^1(M)\cong H^1(M/T)$ by Proposition~\ref{prop:1-1}).
\end{lemma}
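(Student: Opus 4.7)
The plan is to read the lemma off the Serre spectral sequence once equivariant lifts of a basis of $H^1(M)$ are produced, with the basis supplied by Proposition~\ref{prop:3-1}.

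First I would unpack the filtration in total degree one. The only $E_2^{p,q}$ with $p+q=1$ are $E_2^{1,0}=H^1(BT)\otimes H^0(M)=0$ and $E_2^{0,1}=H^1(M)$, so by \eqref{eq:2-1-3} the filtration of $H^1_T(M)$ has a single nonzero subquotient $E_\infty^{0,1}$, and the edge homomorphism is identified with the inclusion $H^1_T(M)\cong E_\infty^{0,1}\hookrightarrow E_2^{0,1}=H^1(M)$. In particular $\iota^*$ is automatically injective, so the task reduces to showing that this inclusion is an equality. Since $E_r^{r,2-r}=0$ once $r\ge 3$, the only possibly nonzero differential leaving the $(0,1)$-spot is $d_2\colon H^1(M)\to H^2(BT)$; thus the lemma is equivalent to the surjectivity of $\iota^*$, or to the vanishing of this single $d_2$.

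Next I would produce equivariant lifts of a generating set. By Proposition~\ref{prop:3-1}, $H^1(M)$ is freely generated by the Poincaré duals $z_1,\dots,z_{b_1}$ of the folds $Z_1,\dots,Z_{b_1}$. Each $Z_j$ is a closed $T$-invariant codimension-one submanifold of $M$ whose normal line bundle is trivial, so $Z_j$ is equivariantly orientable and admits an equivariant Poincaré dual $\tilde z_j\in H^1_T(M)$. The naturality of Poincaré duality under pullback along the fiber inclusion $M\hookrightarrow ET\times_T M$ gives $\iota^*(\tilde z_j)=z_j$, so $\iota^*$ is surjective and hence, combined with the injectivity above, an isomorphism.

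The parenthetical identification $H^1(M)\cong H^1(M/T)$ then follows from Proposition~\ref{prop:1-1}, since the face ring $\Z[\P]$ is concentrated in even degrees and therefore $H^1_T(M)\cong\tilde H^1(M/T)=H^1(M/T)$. The only subtle point is confirming the existence of an equivariant Poincaré dual for each $Z_j$, which reduces to checking equivariant orientability; this is immediate because $M$ itself is oriented, $T$ is connected (so the action preserves orientations), and each $Z_j$ has explicitly trivial normal bundle.
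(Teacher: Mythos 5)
Your proof is correct, but it reaches the surjectivity of $\iota^*$ by a genuinely different route than the paper. The injectivity half is identical: in total degree one only $E_2^{0,1}=H^1(M)$ and $E_2^{1,0}=H^1(BT)=0$ survive, so $H^1_T(M)\cong E_\infty^{0,1}\subset H^1(M)$. For surjectivity, the paper never touches geometry at this point: it invokes \eqref{eq:2-0}, namely that every differential landing in the bottom row vanishes because $\pi^*\colon H^*(BT)\to H^*_T(M)$ is injective (a cross-section of $\pi$ built from a fixed point), so in particular $d_2\colon E_2^{0,1}\to E_2^{2,0}$ is trivial and $E_2^{0,1}=E_\infty^{0,1}$. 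You instead exhibit explicit equivariant lifts: by Proposition~\ref{prop:3-1} the classes $z_1,\dots,z_{b_1}$ dual to the folds generate $H^1(M)$, each fold is a closed $T$-invariant codimension-one submanifold with ($T$-equivariantly) trivial normal bundle, hence carries an equivariant Poincar\'e dual $\tilde z_j\in H^1_T(M)$ restricting to $z_j$. Both arguments are sound. The paper's is leaner, using only $M^T\neq\emptyset$ and no input from Section~\ref{sectBettiNumbers}; yours costs the (already proved, so legitimately available) Proposition~\ref{prop:3-1} and the standard existence of equivariant Thom/Gysin classes, but buys a concrete geometric description of $H^1_T(M)$ as generated by equivariant duals of the folds, in the spirit of the paper's own use of equivariant Gysin maps in Lemma~\ref{lemm:2-1-1} and of the classes $\tau_F$ in Section~\ref{sectEquivar}. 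Your deduction of $H^1(M)\cong H^1(M/T)$ from Proposition~\ref{prop:1-1} (the face ring $\Z[\P]$ being concentrated in even degrees) matches the paper's parenthetical claim.
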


\begin{proof}
By \eqref{eq:2-0}, $$d_2\colon E_2^{0,1} = H^1(M)\to
E_2^{2,0}=H^2(BT)$$ is trivial.  Therefore
$E_2^{0,1}=E_\infty^{0,1}$.  On the other hand,
$E_\infty^{1,0}=E_2^{1,0}=H^1(BT)=0$.  These imply the lemma.
\end{proof}

Since $H^{2n-1}_T(M)=0$, the homomorphism $\iota^*\colon H^{2n-1}_T(M)\to
H^{2n-1}(M)$ cannot be surjective unless $H^{2n-1}(M)=0$.

\begin{lemma} \label{lemm:2-1-1}
The homomorphism $\iota^*\colon H^{2j}_T(M)\to H^{2j}(M)$ is surjective except for
$j=1$ and the rank of the cokernel of $\iota^*$ for $j=1$ is
$nb_1(M)$.
\end{lemma}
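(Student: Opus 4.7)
The plan is to analyze the Serre spectral sequence of $\pi\colon ET\times_T M\to BT$. Since $\iota^*$ is its edge homomorphism, the image in degree $2j$ equals $E_\infty^{0,2j}$, so the cokernel has rank $b_{2j}(M)-\rank E_\infty^{0,2j}$. A parity argument (using Theorem~\ref{theo:3-1} together with $H^{\mathrm{odd}}(BT)=0$) shows that every $d_r$ with $r$ odd is identically zero, and that the only possibly nonzero differentials leaving $(0,2j)$ are $d_{2j}\colon E_{2j}^{0,2j}\to E_{2j}^{2j,1}$ (for any $j\ge 1$) and, when $j=n$, the additional $d_2\colon E_2^{0,2n}\to E_2^{2,2n-1}$.

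The key input is the vanishing of odd-degree equivariant cohomology. By Proposition~\ref{prop:1-1}, $H^m_T(M)\cong \Z[\P]^m\oplus\tilde H^m(M/T)$; since $\Z[\P]$ is concentrated in even degrees and $\tilde H^m(M/T)=0$ for $m\ne 1$, we obtain $H^m_T(M)=0$ for every odd $m\ge 3$. As each $E_\infty^{p,q}$ is torsion-free (being a subquotient of the torsion-free $E_2^{p,q}$), this forces $E_\infty^{p,q}=0$ whenever $p+q$ is odd and $\ge 3$; in particular $E_\infty^{2j,1}=0$ for every $j\ge 1$.

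For $j=1$, the only differential entering $E^{2,1}$ is $d_2^{0,2}$ and none leaves, so $E_\infty^{2,1}=E_2^{2,1}/\im d_2^{0,2}$. Combined with $E_\infty^{2,1}=0$, this shows $d_2^{0,2}$ is surjective onto $E_2^{2,1}\cong\Z^{nb_1(M)}$, so $\coker\iota^*$ in degree $2$ has rank $nb_1(M)$. For $2\le j\le n-1$, the $H^*(BT)$-linearity of $d_2$ (see \eqref{eq:2-1-10}) gives $d_2^{p,2}(\alpha\otimes\beta)=\alpha\cdot d_2^{0,2}(\beta)$, so $\im d_2^{2j-2,2}=H^{2j-2}(BT)\cdot\im d_2^{0,2}=H^{2j-2}(BT)\cdot E_2^{2,1}=E_2^{2j,1}$, where the last equality uses that $H^*(BT)=\Z[t_1,\dots,t_n]$ is generated as a ring by $H^2(BT)$. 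Hence $E_3^{2j,1}=0$, which persists to $E_{2j}^{2j,1}=0$, forcing $d_{2j}^{0,2j}=0$ and making $\iota^*$ surjective in degree $2j$. The same argument eliminates $d_{2n}^{0,2n}$ when $j=n$, and the remaining $d_2^{0,2n}$ is handled directly: the equivariant Poincar\'e dual $\tau_v\in H^{2n}_T(M)$ of any fixed point $v\in M^T$ restricts to a generator of $H^{2n}(M)=\Z$.

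The main obstacle is the vanishing of $H^m_T(M)$ for odd $m\ge 3$ in the second paragraph, which requires the full strength of Proposition~\ref{prop:1-1} (and hence the acyclicity of proper faces together with triviality of the free $T$-bundle $M^\circ\to M^\circ/T$); once this is available, everything else reduces to careful bookkeeping with the $H^*(BT)$-module structure on the spectral sequence.
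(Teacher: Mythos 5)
Your overall strategy is exactly the paper's: run the Serre spectral sequence of $\pi\colon ET\times_TM\to BT$, use Proposition~\ref{prop:1-1} (plus $M/T\simeq$ a wedge of circles and triviality of the free part) to kill $H^m_T(M)$ for odd $m\ge 3$, exploit the $H^*(BT)$-module structure of $d_2$ to propagate the degree-$2$ computation to degrees $4\le 2j\le 2n-2$, and treat degree $2n$ by restricting the equivariant dual of a fixed point. However, one step is asserted without justification and is not covered by your parity argument: in the $j=1$ case you claim that no differential \emph{leaves} $E^{2,1}$. The differential $d_2^{2,1}\colon E_2^{2,1}\to E_2^{4,0}=H^4(BT)$ has nonzero source and target with both horizontal degrees even, so neither $H^{\mathrm{odd}}(BT)=0$ nor Theorem~\ref{theo:3-1} forces it to vanish. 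Without its vanishing, $E_\infty^{2,1}=0$ only yields $\ker d_2^{2,1}=\im d_2^{0,2}$, which gives $\rank\coker\iota^*\le nb_1(M)$ in degree $2$ rather than equality; worse, your argument for $2\le j\le n-1$ feeds the \emph{surjectivity} of $d_2^{0,2}$ through the $H^*(BT)$-linearity to conclude $E_3^{2j,1}=0$, so the gap propagates to all the intermediate degrees. The missing ingredient is precisely the paper's observation \eqref{eq:2-0}: since $M^T\neq\emptyset$, the fibration $\pi$ admits a section, hence $\pi^*$ is injective, the bottom row $E_r^{*,0}$ consists of permanent cocycles that survive to $E_\infty$, and therefore every differential landing in the bottom row is zero; in particular $d_2^{2,1}=0$ (equivalently the transgression $d_2^{0,1}\colon H^1(M)\to H^2(BT)$ vanishes). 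Inserting this one line makes your proof agree with the paper's.

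A minor additional point: your parenthetical justification that $E_\infty^{p,q}$ is torsion-free ``being a subquotient of the torsion-free $E_2^{p,q}$'' is not valid (a quotient of a torsion-free group may have torsion), but it is also unnecessary: for $p+q$ odd and $\ge 3$ the group $E_\infty^{p,q}$ is a subquotient of the filtration of $H^{p+q}_T(M)=0$, hence vanishes with no torsion considerations at all.
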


\begin{proof}
Since $\dim M=2n$, we may assume $1\le j\le n$.

First we treat the case where $j=1$. Since $H^3_T(M)=0$,
$E_\infty^{2,1}=0$ by \eqref{eq:2-1-3} and
$E_\infty^{2,1}=E_3^{2,1}$ by \eqref{eq:2-1-2}.  This together
with \eqref{eq:2-0} implies that
\begin{equation} \label{eq:2-1-9}
d_2\colon H^2(M)=E_2^{0,2}\to E^{2,1}_2=H^2(BT)\otimes H^1(M) \quad\text{is surjective}.
\end{equation}
Moreover $d_3\colon E_3^{0,2}=\ker d_2\to E_3^{3,0}$ is trivial
since $E_3^{3,0}=0$.  Therefore, $E_3^{0,2}=E_\infty^{0,2}$ by
\eqref{eq:2-1-2}. Since $E_\infty^{0,2}$ is the image of
$\iota^*\colon H_T^2(M)\to H^2(M)$, the rank of
$H^2(M)/\iota^*(H^2_T(M))$ is $nb_1(M)$ by \eqref{eq:2-1-9}.

Suppose that $2\le j\le n-1$. We need to prove that the
differentials
\[
d_r\colon E_r^{0,2j}\to E_r^{r,2j-r+1}
\]
are all trivial.  In fact, the target group $E_r^{r,2j-r+1}$
vanishes.  This follows from \eqref{eq:2-1-1} unless $r=2j$.  As
for the case $r=2j$, we note that
\begin{equation} \label{eq:2-1-8}
d_2\colon E_2^{p,2}\to E_2^{p+2,1} \quad \text{is surjective for $p\ge 0$,}
\end{equation}
which follows from \eqref{eq:2-1-10} and \eqref{eq:2-1-9}.
Therefore $E_3^{p+2,1}=0$ for $p\ge 0$, in particular
$E_{r}^{r,2j-r+1}=0$ for $r=2j$ because $j\ge 2$.  Therefore
$\iota^*\colon H^{2j}_T(M)\to H^{2j}(M)$ is surjective for $2\le
j\le n-1$.

The remaining case $j=n$ can be proved directly, namely without
using the Serre spectral sequence. Let $x$ be a $T$-fixed point of
$M$ and let $\varphi\colon x\to M$ be the inclusion map.  Since
$M$ is orientable and $\varphi$ is $T$-equivariant, the
equivariant Gysin homomorphism $\varphi_!\colon H_T^0(x)\to
H_T^{2n}(M)$ can be defined and $\varphi_!(1)\in H_T^{2n}(M)$
restricts to the ordinary Gysin image of $1\in H^0(x)$, that is
the cofundamental class of $M$.  This implies the surjectivity of
$\iota^*\colon H^{2n}_T(M)\to H^{2n}(M)$ because $H^{2n}(M)$ is an
infinite cyclic group generated by the cofundamental class.
\end{proof}

\section{Towards the ring structure}\label{sectTowardsRing}

Let $\pi\colon ET\times_TM\to BT$ be the projection.  Note that
$\pi^*(H^{2}(BT))$ maps to zero by the restriction homomorphism
$\iota^*\colon H^*_T(M)\to H^*(M)$. Hence, $\iota^*$ induces a graded
ring homomorphism
\begin{equation} \label{eq:5-0}
\bar\iota^*\colon H^*_T(M)/(\pi^*(H^{2}(BT)))\to H^*(M)
\end{equation}
which is surjective except in degrees $2$ and $2n-1$ by
Lemma~\ref{lemm:2-1-1} (and bijective in degree $1$ by
Lemma~\ref{lemm:2-1}).  Here $(\pi^*(H^2(BT)))$ denotes the ideal
in $H^*_T(M)$ generated by $\pi^*(H^2(BT))$. The purpose of this
section is to prove the following.

\begin{proposition} \label{prop:5-1}
The map $\bar\iota^*$ in \eqref{eq:5-0} is an isomorphism except in
degrees $2$, $4$ and $2n-1$. Moreover, the rank of the cokernel of
$\bar\iota^*$ in degree $2$ is $nb_1(M)$ and the rank of the
kernel of $\bar\iota^*$ in degree $4$ is $\binom{n}{2}b_1(M)$.
\end{proposition}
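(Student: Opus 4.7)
The strategy is to combine the decomposition $H^*_T(M)\cong\Z[\P]\oplus\tilde H^*(M/T)$ from Proposition~\ref{prop:1-1} with Schenzel's formula for the Hilbert function of $\Z[\P]/(\theta_1,\dots,\theta_n)$, and then compare against the Betti numbers of Theorem~\ref{theo:4-1} degree by degree.

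First, I would rewrite the domain of $\bar\iota^*$. The elements of $\pi^*(H^2(BT))$ sit in the $\Z[\P]$-summand as a linear system of parameters $\theta_1,\dots,\theta_n$ coming from the characteristic map of $M$, and the proof of Proposition~\ref{prop:1-1} shows that products between the two summands of $H^*_T(M)$ vanish, so
$$H^*_T(M)/(\pi^*(H^2(BT)))\;\cong\;\Z[\P]/(\theta_1,\dots,\theta_n)\;\oplus\;\tilde H^*(M/T)$$
as graded abelian groups. Since $M/T$ is homotopy equivalent to $\bigvee_{b_1(M)}S^1$, the second summand is concentrated in degree~$1$ with rank $b_1(M)$. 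With this identification in hand I can read off the easy degrees: degree~$1$ is an isomorphism by Lemma~\ref{lemm:2-1}; in odd degrees $3\le 2j+1\le 2n-3$ both sides vanish (target by Theorem~\ref{theo:3-1}); degree $2n-1$ has vanishing source but a target of rank $b_1(M)$, marking it as exceptional; and in every even degree $\ge 4$, Lemma~\ref{lemm:2-1-1} gives surjectivity, while the same lemma pins down the cokernel rank in degree~$2$ as $nb_1(M)$. What remains is to compute, in each even degree $2j$, the quantity
$$d_j := \dim_{\Q}\bigl(\Z[\P]/(\theta)\otimes\Q\bigr)_{2j} - b_{2j}(M).$$

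The technical core is to verify that $\P$ is Buchsbaum over any field (the link of a non-empty face of $\P$ is the simplicial poset dual to a proper face of $M/T$, which is acyclic, so its face ring is Cohen--Macaulay) and then to invoke Schenzel's theorem
$$\dim_{\Q}\bigl(\Z[\P]/\theta\otimes\Q\bigr)_{2j} = h_j + \binom{n}{j}\sum_{k=1}^{j-1}(-1)^{j-k-1}\tilde\beta_{k-1}(|\P|;\Q).$$
I would then compute the reduced Betti numbers of $|\P|\simeq\partial(M/T)$ by combining the long exact sequence of the pair $(M/T,\partial(M/T))$ with the Lefschetz duality isomorphism $H^k(M/T,\partial(M/T))\cong H_{n-k}(M/T)$ and the homotopy type $M/T\simeq\bigvee_{b_1(M)}S^1$. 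The resulting calculation yields
$$\tilde\beta_k(|\P|)=\begin{cases} b_1(M), & k\in\{1,\,n-2\},\\ 0, & \text{otherwise.}\end{cases}$$

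Finally, substituting these Betti numbers into Schenzel's formula and comparing with the expressions $b_{2j}(M)=h_j-(-1)^j\binom{n}{j}b_1(M)$ for $1\le j\le n-1$ and $b_{2n}(M)=h_n+(1-(-1)^n)b_1(M)$ from Theorem~\ref{theo:4-1}, a routine sign-and-binomial bookkeeping produces $d_1=-nb_1(M)$ (matching the degree~$2$ cokernel), $d_2=\binom{n}{2}b_1(M)$ (the advertised degree~$4$ kernel rank), $d_j=0$ for $3\le j\le n-1$ via the cancellation $(-1)^{j-2}+(-1)^{j-1}=0$ on the $\tilde\beta_1$-contribution, and $d_n=0$ because the two nonzero terms $\tilde\beta_1$ and $\tilde\beta_{n-2}$ in Schenzel's sum together yield exactly the extra $(1-(-1)^n)b_1(M)$ that distinguishes the top-degree Betti formula from its generic evaluation. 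The main obstacles are the Buchsbaum verification for $\P$ and the cancellation at $j=n$; the Lefschetz-duality computation of $H^*(\partial(M/T))$ is the critical geometric input that supplies both nonzero reduced Betti numbers of $|\P|$.
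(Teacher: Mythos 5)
Your proposal follows essentially the same route as the paper's proof: Proposition~\ref{prop:1-1} to identify $H^{even}_T(M)$ with $\Z[\P]$ and $\pi^*(H^2(BT))$ with a linear system of parameters, the Schenzel--Novik--Swartz formula (Theorem~\ref{theo:5-1}) applied to the Buchsbaum poset $\P$, the computation of $H_*(|\P|)\cong H_*(\partial(M/T))$ via Poincar\'e--Lefschetz duality and $M/T\simeq\bigvee_{b_1}S^1$, comparison with Theorem~\ref{theo:4-1}, and Lemma~\ref{lemm:2-1-1} for surjectivity and the degree-$2$ cokernel. Within that outline, two steps are stated too loosely to go through as written. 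First, your formula $\tilde\beta_k(|\P|)=b_1$ for $k\in\{1,\,n-2\}$ and $0$ otherwise is correct only for $n\ge 4$ (dropping $\tilde\beta_{n-1}=1$ is harmless, since Schenzel's inner sum stops at $i=j-2\le n-2$): for $n=3$ one has $\tilde\beta_1=2b_1$ and for $n=2$ one has $\tilde\beta_0=b_1$, $\tilde\beta_1=b_1+1$, and plugging your literal values into the $j=n$ bookkeeping would give $d_n=-b_1\neq 0$ when $n=3$. Your description of the top-degree cancellation shows you intend both the $\tilde\beta_1$ and $\tilde\beta_{n-2}$ contributions, but the coincidence of these indices in low dimensions forces the separate case analysis carried out in Lemma~\ref{lemm:5-2}. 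Relatedly, the Buchsbaum property does not follow from acyclicity of the dual face alone; one needs that the links of nonempty simplices are homology spheres, which is deduced from acyclicity \emph{plus orientability} of each proper face via Poincar\'e--Lefschetz duality, as in Lemma~\ref{lemm:samehomology}.

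Second, your count is rational, while the proposition is about integral cohomology. Equality of $\Q$-ranks together with surjectivity of $\bar\iota^*$ over $\Z$ only shows that the kernel is a finite torsion group in the even degrees $\ge 6$ and $2n$, and $\Z[\P]/(\theta_1,\dots,\theta_n)$ is not obviously torsion-free, so integral injectivity does not yet follow. The paper closes this by running the entire Hilbert-series comparison over an arbitrary field $\Bbbk$ (legitimate because $H^*(M)$ and $H^*_T(M)$ are torsion free, so Betti numbers are field-independent), and then transferring the conclusion to $\Z$ using right-exactness of $-\otimes\Bbbk$ and the identification of the mod-$\Bbbk$ reduction of $\bar\iota^*$ with $\bar\iota^*\otimes\Bbbk$; some such argument needs to be added to your sketch.
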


The rest of this section is devoted to the proof of
Proposition~\ref{prop:5-1}.  We recall the following result, which
was proved by Schenzel (\cite{sche81}, \cite[p.73]{stan96}) for
Buchsbaum simplicial complexes  and generalized to Buchsbaum
simplicial posets by Novik-Swartz (\cite[Proposition
6.3]{no-sw09}).
There are several equivalent definitions for Buchsbaum simplicial complexes (see \cite[p.73]{stan96}).  A convenient one for us would be that a finite simplicial complex $\Delta$ is Buchsbaum (over a field $\Bbbk$) if $H_i(|\Delta|,|\Delta|\backslash\{p\};\Bbbk)=0$ for all $p\in |\Delta|$ and all $i<\dim |\Delta|$, where $|\Delta|$ denotes the realization of $\Delta$.  In particular, a triangulation $\Delta$ of a manifold is Buchsbaum over any field $\Bbbk$.   A simplicial poset is a (finite) poset $P$ that has a unique minimal element, $\hat 0$, and
such that for every $\tau\in P$, the interval $[\hat 0,\tau]$ is a Boolean algebra.  The face poset of a simplicial complex is a simplicial poset and one has the realization $|P|$ of $P$ where $|P|$ is a regular CW complex, all of whose closed cells are simplices corresponding to the intervals $[\hat 0,\tau]$.  A simplicial poset $P$ is Buchsbaum (over $\Bbbk$) if its order complex $\Delta(\overline P)$ of the poset $\overline P=P\backslash\{\hat 0\}$ is Buchsbaum (over $\Bbbk$).  Note that $|\Delta(\overline P)|=|P|$ as spaces since $|\Delta(\overline P)|$ is the barycentric subdivision of $|P|$. See \cite{no-sw09} and \cite{stan96} for more details.

\begin{theorem}[Schenzel, Novik-Swartz] \label{theo:5-1}
Let $\Delta$ be a Buchsbaum simplicial poset of dimension $n-1$ over a field $\Bbbk$, $\Bbbk[\Delta]$ be the face ring of $\Delta$
and let $\theta_1,\dots,\theta_n\in \Bbbk[\Delta]_1$ be a linear
system of parameters. Then
\[
\begin{split}
F(\Bbbk[\Delta]/(\theta_1,\dots,\theta_n),t)=&(1-t)^nF(\Bbbk[\Delta],t)\\
&+\sum_{j=1}^n\binom{n}{j}\Big(\sum_{i=-1}^{j-2}(-1)^{j-i}\dim_\Bbbk
\tilde H_{i}(\Delta)\Big)t^j
\end{split}
\]
where $F(M,t)$ denotes the Hilbert series of a graded module $M$.
\end{theorem}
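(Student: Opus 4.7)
The plan follows Schenzel's classical strategy, adapted to simplicial posets as in Novik--Swartz: combine the local-cohomology calculus for Buchsbaum graded modules with a Reisner-type identification of the local cohomology of $R:=\Bbbk[\Delta]$ in terms of the reduced simplicial homology of $\Delta$. Write $\Theta=(\theta_1,\ldots,\theta_n)$ and let $\mathfrak{m}$ be the graded maximal ideal of $R$. The first step is to show that, for a Buchsbaum simplicial poset $\Delta$ of dimension $n-1$, every $H^i_{\mathfrak{m}}(R)$ with $0\le i<n$ is concentrated in internal degree $0$, with
\[
\dim_\Bbbk\bigl(H^i_{\mathfrak{m}}(R)\bigr)_{0}=\dim_\Bbbk\tilde H_{i-1}(\Delta;\Bbbk),
\]
and vanishes in all other internal degrees. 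For simplicial complexes this is Reisner's theorem in Gr\"abe's refined form; for simplicial posets it is the Novik--Swartz extension, proved by computing $H^i_{\mathfrak{m}}$ from the \v{C}ech complex and reducing to Hochster's formula on the order complex via the identification $|\Delta|=|\Delta(\overline\Delta)|$ noted just before the theorem.

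The second step is a parameter-by-parameter recursion: I would mod out the $\theta_j$ one at a time and track how the Hilbert series and the local cohomology evolve. For any Buchsbaum graded $R$-module $M$ of Krull dimension $d$ and any parameter $\theta\in\mathfrak{m}$, the Buchsbaum property gives $(0:_M\theta)=H^0_{\mathfrak{m}}(M)$, and the degree-shifted four-term exact sequence
\[
0\to H^0_{\mathfrak{m}}(M)(-1)\to M(-1)\xrightarrow{\,\theta\,}M\to M/\theta M\to 0
\]
yields $F(M/\theta M,t)=(1-t)F(M,t)+t\,F\!\bigl(H^0_{\mathfrak{m}}(M),t\bigr)$. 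Since $\mathfrak{m}$ annihilates every $H^i_{\mathfrak{m}}(M)$, the long exact sequence in local cohomology collapses to short exact sequences
\[
0\to H^i_{\mathfrak{m}}(M)\to H^i_{\mathfrak{m}}(M/\theta M)\to H^{i+1}_{\mathfrak{m}}(M)\to 0\qquad(0\le i<d-1),
\]
so $M/\theta M$ is Buchsbaum of dimension $d-1$ whose local cohomology stacks as $\dim H^i_{\mathfrak{m}}(M)+\dim H^{i+1}_{\mathfrak{m}}(M)$. Iterating through $\theta_1,\ldots,\theta_n$ expresses $F(R/\Theta,t)$ as $(1-t)^n F(R,t)$ plus a sum of contributions proportional to the dimensions $\dim H^i_{\mathfrak{m}}(R)$ identified in the first step.

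The third step is combinatorial bookkeeping. After expanding the iterated recursion, a class in $H^i_{\mathfrak{m}}(R)$ enters the correction with weight $\binom{n}{j}(-1)^{j-i}t^j$: the binomial counts the positions along the $n$ parameter reductions at which such a class is ``released'' into internal degree $j$, and the sign records the alternating splitting pattern produced by the short exact sequences above. Substituting the identification $\dim H^i_{\mathfrak{m}}(R)=\dim\tilde H_{i-1}(\Delta)$ from the first step and re-indexing $i\mapsto i+1$ reproduces exactly
\[
\sum_{j=1}^n\binom{n}{j}\Big(\sum_{i=-1}^{j-2}(-1)^{j-i}\dim_\Bbbk\tilde H_i(\Delta)\Big)t^j,
\]
which is the claimed correction to $(1-t)^nF(R,t)$.

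The main obstacle I expect is the precise sign-and-shift bookkeeping in the third step: matching the iterated recursion of the second step to the closed formula requires a careful induction on $n$, keeping track of signs, internal-degree shifts, and binomial identities arising from how many parameter-reduction steps a given local-cohomology class ``survives''. A secondary technical point is verifying the assertion used in the second step that the Buchsbaum identity $(0:_M\theta)=H^0_{\mathfrak{m}}(M)$ is preserved along the recursion; this is standard, but relies on the stability of the Buchsbaum property under reduction by parameters and on checking that each $\theta_j$ remains a parameter on the module obtained at the previous stage.
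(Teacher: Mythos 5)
The paper does not prove Theorem~\ref{theo:5-1} at all: it is quoted verbatim from Schenzel \cite{sche81} (see also \cite[p.73]{stan96}) and from Novik--Swartz \cite[Proposition 6.3]{no-sw09}, so there is no internal proof to compare with. Your sketch reconstructs the standard argument of those sources: identify $H^i_{\mathfrak{m}}(\Bbbk[\Delta])$ for $i<n$ as a copy of $\tilde H_{i-1}(\Delta;\Bbbk)$ concentrated in internal degree $0$, then kill the parameters one at a time using $F(M/\theta M,t)=(1-t)F(M,t)+t\,F(H^0_{\mathfrak{m}}(M),t)$ together with the splitting of local cohomology under Buchsbaum parameter reduction. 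That architecture is correct, and it helps that only $H^0_{\mathfrak{m}}$ of the successive quotients enters the Hilbert-series recursion, so the top local cohomology (where $\mathfrak{m}$ does not act trivially) never intervenes.

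Three points need repair or completion. First, the short exact sequence must carry the internal twist, $0\to H^i_{\mathfrak{m}}(M)\to H^i_{\mathfrak{m}}(M/\theta M)\to H^{i+1}_{\mathfrak{m}}(M)(-1)\to 0$ for $i\le\dim M-2$; without the shift $(-1)$ the classes never move up in internal degree and the correction term would wrongly concentrate in degree $1$. Second, your account of the signs is not where they actually come from: iterating the recursion gives the correction $\sum_{k=0}^{n-1}(1-t)^{n-1-k}\,t\,F(H^0_{\mathfrak{m}}(M_k),t)$ with $F(H^0_{\mathfrak{m}}(M_k),t)=\sum_{l\ge 0}\binom{k}{l}t^{l}\dim_\Bbbk\tilde H_{l-1}(\Delta)$, all coefficients nonnegative; the alternating inner sum in the statement only appears after expanding the $(1-t)^{n-1-k}$ factors, via the identity $\sum_{k\ge l}\binom{k}{l}t^{l+1}(1-t)^{n-1-k}=\sum_{j\ge l+1}\binom{n}{j}(-1)^{j-l-1}t^{j}$ --- you flagged this bookkeeping as the main obstacle, so it is an acknowledged gap, but as stated the ``alternating splitting pattern'' explanation is misleading. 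Third, and this is the step that would genuinely fail as written: the poset case does not reduce ``to Hochster's formula on the order complex via $|\Delta|=|\Delta(\overline\Delta)|$''. The face ring of a simplicial poset is generated by face variables $v_\tau$ in degrees equal to rank and is not the Stanley--Reisner ring of the order complex (that ring is the face ring of the barycentric subdivision, with a different Hilbert series), so Hochster's formula applied to $\Delta(\overline\Delta)$ computes the wrong module. To get the concentration of $H^i_{\mathfrak{m}}(\Bbbk[\Delta])$, $i<n$, in degree $0$ with dimension $\dim_\Bbbk\tilde H_{i-1}(\Delta)$ you need Duval's Hochster-type formula for face rings of simplicial posets (local cohomology expressed through links in the poset), or the direct argument of Novik--Swartz; with that substitution your outline matches the published proof.
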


As is well-known, the Hilbert series of the face ring $\Bbbk[\Delta]$ satisfies
\[
(1-t)^nF(\Bbbk[\Delta],t)=\sum_{i=0}^nh_it^i.
\]
We define $h_i'$ for $i=0,1,\dots,n$
by
\[
F(\Bbbk[\Delta]/(\theta_1,\dots,\theta_n),t)=\sum_{i=0}^nh_i't^i,
\]
following \cite{no-sw09}.

\begin{remark}
Novik-Swartz \cite{no-sw09} introduced
\[
h_i'':=h_i'-\binom{n}{j}\dim_\Bbbk \tilde
H_{j-1}(\Delta)=h_j+\binom{n}{j}\Big(\sum_{i=-1}^{j-1}(-1)^{j-i}\dim_\Bbbk
\tilde H_{i}(\Delta)\Big)
\]
for $1\le i\le n-1$ and showed that $h''_j\ge 0$ and
$h''_{n-j}=h''_j$ for $1\le j\le n-1$.
\end{remark}

We apply Theorem~\ref{theo:5-1} to our simplicial poset $\P$ which
is dual to the face poset of $\partial(M/T)$.  For that we need to
know the homology of the geometric realization $|\P|$ of $\P$.
First we show that $|\P|$ has the same homological features as
$\partial(M/T)$.

\begin{lemma} \label{lemm:samehomology}
The simplicial poset $\P$ is Buchsbaum, and $|\P|$ has the same
homology as $\partial(M/T)$.
\end{lemma}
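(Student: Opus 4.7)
The plan is to deduce both claims together from an acyclic-cover argument, using only the acyclicity of every proper face of $M/T$ and the fact that faces of $M/T$ are themselves nice manifolds with corners.

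For the homology equivalence $H_\ast(|\P|)\cong H_\ast(\partial(M/T))$, I would work with the cover of $\partial(M/T)$ by its facets $F_1,\dots,F_m$. Every $(k+1)$-fold intersection $F_{i_0}\cap\cdots\cap F_{i_k}$ is a disjoint union of codimension-$(k+1)$ faces of $M/T$, each acyclic by hypothesis. After thickening the $F_i$ to open tubular neighborhoods that deformation retract onto them, the \v{C}ech-to-singular spectral sequence degenerates at $E_2$ and presents $H_\ast(\partial(M/T))$ as the homology of a chain complex whose $k$-chains are the free abelian group on the \emph{connected components} of the $(k+1)$-fold facet intersections. By the very definition of $\P$, this is exactly the augmented simplicial chain complex of $|\P|$, yielding the isomorphism. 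The same argument, applied with a proper face $G$ in place of $M/T$, will be used inductively below.

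For the Buchsbaum property, I would verify that for every simplex $\sigma\in\P$ the link $\link_\P(\sigma)$ has the homology of a sphere of the appropriate dimension; the standard computation of local homology in terms of links at a point in the interior of $|\sigma|$ then gives the vanishing $H_i(|\P|,|\P|\setminus\{p\};\Bbbk)=0$ for $i<n-1$ that defines the Buchsbaum condition. A $k$-simplex $\sigma\in\P$ corresponds to a codimension-$(k+1)$ face $G$ of $M/T$, and $\link_\P(\sigma)$ is canonically the simplicial poset dual to $\partial G$, where $G$ is a nice compact manifold with corners of dimension $n-k-1$ whose proper faces are themselves proper faces of $M/T$ and therefore acyclic. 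The first step applied to $G$ gives $H_\ast(|\link_\P(\sigma)|)\cong H_\ast(\partial G)$, and since $G$ is acyclic and (inheriting orientability from $M$ via Lemma~\ref{lemm:1-3} applied to the corresponding $T$-invariant submanifold of $M$) oriented, Lefschetz--Poincar\'e duality on the pair $(G,\partial G)$ forces $\partial G$ to be a homology $(n-k-2)$-sphere, as required.

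The main obstacle is the combinatorial bookkeeping in the first step: the \v{C}ech complex must record one generator for each connected component of each multiple intersection, so that it matches the simplicial chain complex of the simplicial poset $|\P|$ rather than that of the nerve (a simplicial complex, which can have strictly fewer simplices when two facets meet in several components). A secondary delicacy is the orientability of each face $G$, which is not automatic from orientability of $M/T$; one realizes $G$ as the quotient of an orientable $T$-invariant submanifold of $M$ and applies Lemma~\ref{lemm:1-3} to that submanifold before invoking duality.
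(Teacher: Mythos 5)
Your proposal is correct, but it reaches the conclusion by a genuinely different mechanism than the paper. The paper (following \cite[Lemma 3.14]{ayze14}) constructs a face-preserving map $g\colon \partial(M/T)\to |\P|$ (the one used in the proof of Proposition~\ref{prop:1-1}), proves by induction on the dimension of faces that $g$ induces isomorphisms on $H_*(F)$, $H_*(\partial F)$ and $H_*(F,\partial F)$, uses acyclicity, orientability and Poincar\'e--Lefschetz duality to see that $H_*(F',\partial F')$ is concentrated in top degree --- whence the links of nonempty simplices are homology spheres and $\P$ is Buchsbaum by \cite[Prop.~6.2]{no-sw09} --- and finally gets the global isomorphism by comparing the spectral sequences of the skeletal filtrations via $g$. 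You avoid constructing $g$ altogether: the component-refined \v{C}ech/Mayer--Vietoris spectral sequence of the facet cover collapses because all components of multiple intersections are proper faces, hence acyclic, and its bottom row is precisely the chain complex with one generator per simplex of the simplicial poset (your insistence on components rather than the nerve is exactly the right point, since two facets may meet in several faces); note only that this identifies it with the \emph{unaugmented} complex, as the spectral sequence computes unreduced homology. Your Buchsbaum step is in substance the same as the paper's: $\link_\P(\sigma)$ is the poset dual to $\partial G$ for the corresponding face $G$, your first step applied to $G$ gives $H_*(|\link_\P(\sigma)|)\cong H_*(\partial G)$, and Lefschetz duality on the acyclic orientable $G$ makes this a homology sphere; your route to orientability of $G$ through the $T$-invariant submanifold $q^{-1}(G)$ and Lemma~\ref{lemm:1-3} is legitimate and addresses a point the paper passes over silently. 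What the two approaches buy: the paper's argument produces a specific map inducing the isomorphism (and sets up the induction scheme reused elsewhere in \cite{ayze14}), whereas yours gives only an abstract isomorphism --- which is all this lemma and its later uses (Betti number computations) require --- at the cost of the \v{C}ech bookkeeping and the thickening of the closed facet cover, standard points that you correctly flag and that are no less rigorous than the paper's own sketch.
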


\begin{proof}
We give a sketch of the proof. Details can be found in \cite[Lemma
3.14]{ayze14}. There is a dual face structure on $|\P|$, and there
exists a face preserving map $g\colon \partial(M/T)\to |\P|$
mentioned in the proof of Proposition \ref{prop:1-1}. Let $F$ be a
proper face of $M/T$ and $F'$ the corresponding face of $|\P|$. By
induction on $\dim F$ we can show that $g$ induces the
isomorphisms $g_*\colon H_*(\partial F)\xrightarrow{\cong}
H_*(\partial F')$, $g_*\colon H_*(F)\xrightarrow{\cong} H_*(F')$,
and $g_*\colon H_*(F,\partial F)\xrightarrow{\cong}
H_*(F',\partial F')$. Since $F$ is an acyclic orientable manifold
with boundary, we deduce by Poincar\'{e}-Lefschetz duality that
$H_*(F',\partial F')\cong H_*(F,\partial F)$ vanishes except in
degree $\dim F$. Note that $F'$ is a cone over $\partial F'$ and
$\partial F'$ is homeomorphic to the link of a nonempty simplex of
$\P$. Thus the links of nonempty simplices of $\P$ are homology
spheres, and $\P$ is Buchsbaum \cite[Prop.6.2]{no-sw09}. Finally,
$g$ induces an isomorphism of spectral sequences corresponding to
skeletal filtrations of $\partial(M/T)$ and $|\P|$, thus induces
an isomorphism $g_*\colon H_*(\partial
(M/T))\xrightarrow{\cong}H_*(|\P|)$.
\end{proof}


\begin{lemma} \label{lemm:5-1}
$|\P|$ has the same homology as $S^{n-1}\sharp b_1(S^1\times
S^{n-2})$ (the connected sum of $S^{n-1}$ and $b_1$ copies of
$S^1\times S^{n-2}$).
\end{lemma}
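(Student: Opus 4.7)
The plan is to reduce the computation of $H_*(|\P|)$ to that of $H_*(\partial(M/T))$ using Lemma~\ref{lemm:samehomology}, and then combine Poincar\'e--Lefschetz duality on $M/T$ with the long exact homology sequence of the pair $(M/T,\partial(M/T))$.

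First I would record the shape of $M/T$. By Lemma~\ref{lemm:1-3} the orbit space $M/T$ is an orientable $n$-dimensional manifold with boundary, and by the description of $M/T$ as a gluing of Delzant polytopes along fold facets (Section~\ref{sectToricOrigami}) it is homotopy equivalent to the template graph, i.e., to $\bigvee^{b_1(M)} S^1$. Hence $H^0(M/T)=\Z$, $H^1(M/T)=\Z^{b_1(M)}$, and $H^k(M/T)=0$ for $k\ge 2$. Poincar\'e--Lefschetz duality then gives
\[
H_k(M/T,\partial(M/T))\cong H^{n-k}(M/T),
\]
which is $\Z$ in degree $n$, $\Z^{b_1(M)}$ in degree $n-1$, and zero otherwise.

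Next I would feed these groups, together with $H_*(M/T)$, into the long exact sequence of the pair and read off $H_k(\partial(M/T))$ degree by degree. For $n\ge 3$ the nonzero entries are well separated, and the sequence decouples into short exact pieces that force $H_{n-1}(\partial(M/T))\cong\Z$, $H_{n-2}(\partial(M/T))\cong\Z^{b_1(M)}$, $H_1(\partial(M/T))\cong\Z^{b_1(M)}$, and $H_j(\partial(M/T))=0$ in every intermediate degree $2\le j\le n-3$. Connectedness $H_0(\partial(M/T))=\Z$ is then forced by Poincar\'e duality on the closed orientable $(n-1)$-manifold $\partial(M/T)$, since the rank of its top homology equals the number of connected components, which we have just computed to be $1$. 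These ranks coincide exactly with those of $S^{n-1}\sharp b_1(M)(S^1\times S^{n-2})$, which is the statement of the lemma.

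The main (mild) obstacle is bookkeeping: one must verify that the short pieces extracted from the long exact sequence have free abelian targets, so no extension problems occur and no torsion sneaks in. This is automatic because all of the relative and absolute (co)homology groups of $M/T$ used above are finitely generated and free abelian. The low-dimensional degenerate case $n=3$ is handled by the same argument, which then identifies $\partial(M/T)$ (homologically) with a closed orientable surface of genus $b_1(M)$, matching $S^2\sharp b_1(M)(S^1\times S^1)$.
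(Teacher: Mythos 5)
For $n\ge 3$ your argument is correct and is essentially the paper's own proof: reduce to $\partial(M/T)$ via Lemma~\ref{lemm:samehomology}, use that $M/T$ is an orientable manifold with corners (Lemma~\ref{lemm:1-3}) homotopy equivalent to a wedge of $b_1$ circles, and then combine Poincar\'e--Lefschetz duality with the long exact sequence of the pair $(M/T,\partial(M/T))$; your duality argument for connectedness of $\partial(M/T)$ and your treatment of the colliding degrees when $n=3$ are fine, and freeness of all the groups involved disposes of extension problems just as in the paper.

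The genuine gap is the case $n=2$, which you never address and which your method cannot reach. The lemma is needed for $n=2$ (it feeds into Lemma~\ref{lemm:5-2}, Proposition~\ref{prop:5-1} and the whole discussion of $4$-dimensional $M$ in Section~\ref{sect4dimCase}), and there it amounts to the assertion that $\partial(M/T)$ consists of exactly $b_1+1$ circles, i.e.\ that the orientable surface $M/T$ has genus zero. The homological bookkeeping you use cannot see this: a genus-$g$ surface with one boundary circle is orientable, is homotopy equivalent to a wedge of $2g$ circles, and has exactly the same groups $H_*(M/T)$ and $H_*(M/T,\partial(M/T))$ that enter your argument, yet its boundary has one component rather than $b_1+1$. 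Concretely, for $n=2$ the long exact sequence contains the map $H_1(M/T)\to H_1(M/T,\partial(M/T))$ (dual to the intersection form), which need not vanish, so the sequence no longer decouples and the number of boundary components is not determined by it. One needs genuinely geometric input about origami templates; the paper supplies it by induction on $b_1$, cutting $M/T$ along a fold as in Section~\ref{sectBettiNumbers} and using that this cut decreases both $b_1(M/T)$ and $b_0(\partial(M/T))$ by one, with the tree case ($b_1=0$) as the base. Your proof needs this (or an equivalent argument excluding positive genus of $M/T$) to cover $n=2$.
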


\begin{proof}
By Lemma~\ref{lemm:samehomology} we only need to prove that $\partial(M/T)$
has the same homology groups as $S^{n-1}\sharp b_1(S^1\times
S^{n-2})$.  Since $M/T$ is homotopy equivalent to a wedge of
circles, $H^i(M/T)=0$ for $i\ge 2$ and hence the homology exact
sequence of the pair $(M/T,\partial(M/T))$ shows that
\[
H_{i+1}(M/T,\partial(M/T))\cong H_{i}(\partial(M/T)) \quad\text{for $i\ge 2$}.
\]
On the other hand, $M/T$ is orientable by Lemma~\ref{lemm:1-3} and
hence
\[
H_{i+1}(M/T,\partial(M/T))\cong H^{n-i-1}(M/T)
\]
by Poincar\'e--Lefschetz duality, and $H^{n-i-1}(M/T)=0$ for
$n-i-1\ge 2$.  These show that
\begin{equation*} \label{eq:5-0-0}
H_i(\partial(M/T))=0\quad\text{for $2\le i\le n-3$}.
\end{equation*}
Thus it remains to study $H_i(\partial(M/T))$ for $i=0,1,n-2,n-1$
but since $\partial(M/T)$ is orientable (because $M/T$ is orientable), it
suffices to show
\begin{equation} \label{eq:5-0-1}
H_i(\partial(M/T))\cong H_i(S^{n-1}\sharp b_1(S^1\times
S^{n-2}))\quad\text{for $i=0,1$}.
\end{equation}

When $n\ge 3$, $S^{n-1} \sharp b_1(S^1\times S^{n-2})$ is
connected, so \eqref{eq:5-0-1} holds for $i=0$ and $n\ge 3$.
Suppose that $n\ge 4$.  Then $H^{n-2}(M/T)=H^{n-1}(M/T)=0$, so the
cohomology exact sequence for the pair $(M/T,\partial(M/T))$ shows
that $H^{n-2}(\partial(M/T))\cong H^{n-1}(M/T,\partial(M/T))$ and
hence $H_1(\partial(M/T))\cong H_1(M/T)$ by Poincar\'e--Lefschetz
duality.  Since $M/T$ is homotopy equivalent to a wedge of $b_1$
circles, this proves \eqref{eq:5-0-1} for $i=1$ and $n\ge 4$. Assume that
$n=3$. Then $H_1(M/T,\partial(M/T))\cong H^2(M/T)=0$.  We also know that
$H_2(M/T)=0$. The homology exact sequence for the pair
$(M/T,\partial(M/T))$ yields a short exact sequence
\[
0\to H_2(M/T,\partial(M/T))\to H_1(\partial(M/T))\to H_1(M/T)\to 0.
\]
Here $H_2(M/T,\partial(M/T)) \cong H^1(M/T)$ by
Poincar\'e--Lefschetz duality.  Since $M/T$ is homotopy equivalent
to a wedge of $b_1$ circles, this implies \eqref{eq:5-0-1} for
$i=1$ and $n=3$.

It remains to prove \eqref{eq:5-0-1} when $n=2$.  We use induction
on $b_1$.  The assertion is true when $b_1=0$.  Suppose that
$b_1=b_1(M/T)\ge 1$.  We cut $M/T$ along a fold so that
$b_1(M'/T)=b_1(M/T)-1$, where $M'$ is the toric origami manifold
obtained from the cut, see Section \ref{sectBettiNumbers}. Then
$b_0(\partial(M'/T))=b_0(\partial(M/T))-1$. Since \eqref{eq:5-0-1}
holds for $\partial(M'/T)$ by induction assumption, this
observation shows that \eqref{eq:5-0-1} holds for $\partial(M/T)$.
\end{proof}

\begin{lemma} \label{lemm:5-2}
For $n\ge 2$, we have
\[
\sum_{i=0}^nh_i't^i= \sum_{i=0}^nb_{2i}t^i-nb_1t+\binom{n}{2}b_1t^2.
\]
\end{lemma}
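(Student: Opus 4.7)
The plan is to apply Schenzel's theorem (Theorem~\ref{theo:5-1}) to the simplicial poset $\P$, which is Buchsbaum by Lemma~\ref{lemm:samehomology}, and then convert the $h_i$'s to $b_{2i}$'s using Theorem~\ref{theo:4-1}. Set $\beta_i := \dim_\Bbbk \tilde H_i(\P)$ and $C_j := \sum_{i=-1}^{j-2}(-1)^{j-i}\beta_i$. Schenzel's formula reads
$$\sum_{i=0}^n h_i' t^i = \sum_{i=0}^n h_i\, t^i + \sum_{j=1}^n \binom{n}{j} C_j\, t^j,$$
while Theorem~\ref{theo:4-1}, rearranged, gives $\sum_{i=0}^n h_i\, t^i = \sum_{i=0}^n b_{2i}\, t^i - b_1\bigl(1 + t^n - (1-t)^n\bigr)$. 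Thus the lemma reduces to the purely combinatorial identity
$$\sum_{j=1}^n \binom{n}{j} C_j\, t^j = b_1\bigl(1+t^n-(1-t)^n\bigr) - nb_1\, t + \binom{n}{2} b_1\, t^2.$$

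Next I would compute the $\beta_i$'s from Lemma~\ref{lemm:5-1}. Since the homology of $S^{n-1}\sharp b_1(S^1\times S^{n-2})$ is torsion-free, the Betti numbers are independent of the coefficient field. For $n\ge 4$ the only nonzero values are $\beta_1 = b_1$, $\beta_{n-2} = b_1$, $\beta_{n-1}=1$; a short calculation then gives $C_1=C_2=0$, $C_j = (-1)^{j-1}b_1$ for $3\le j\le n-1$, and $C_n = \bigl((-1)^{n-1}+1\bigr)b_1$ (the extra $+b_1$ arises from $\beta_{n-2}$, which enters the sum only at $j=n$). Plugging these values in and expanding $\sum_{j=0}^n(-1)^j\binom{n}{j}t^j = (1-t)^n$, the identity is verified by matching coefficients. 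The degenerate cases $n=2$ (where $\beta_0=b_1$ and $\beta_1=b_1+1$) and $n=3$ (where the classes $\beta_1$ and $\beta_{n-2}$ collide into a single $\beta_1=2b_1$, with $\beta_2=1$) are handled by the same direct computation; each has already been sanity-checked implicitly in the proof of Theorem~\ref{theo:4-1}.

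The only real obstacle is bookkeeping: correctly tracking the signs in Schenzel's alternating sum and correctly merging the contribution of $\beta_{n-2}$ into the boundary term $C_n$. Once both sides of the target identity are expanded as polynomials, equality is immediate, and the alternating binomial pattern $1 - (1-t)^n$ on the right-hand side matches the contributions of $\beta_1$ at $j=3,\dots,n-1$ together with the boundary correction at $j=n$.
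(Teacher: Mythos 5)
Your proposal is correct and follows essentially the same route as the paper: apply the Schenzel--Novik--Swartz formula to the Buchsbaum poset $\P$, insert the reduced Betti numbers of $\P$ from Lemma~\ref{lemm:5-1}, and convert $\sum h_it^i$ into $\sum b_{2i}t^i$ via Theorem~\ref{theo:4-1}, treating $n=2,3$ by the same direct computation. Your computed values $C_1=C_2=0$, $C_j=(-1)^{j-1}b_1$ for $3\le j\le n-1$, $C_n=((-1)^{n-1}+1)b_1$ and the resulting polynomial identity match the paper's calculation exactly.
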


\begin{proof}
By Lemma~\ref{lemm:5-1}, for $n\ge 4$, we have
\[
\dim\tilde H_i(\P)=\begin{cases} b_1 \quad&\text{if $i=1,\ n-2$},\\
1 \quad&\text{if $i=n-1$},\\
0\quad &\text{otherwise}.
\end{cases}
\]
Hence
\[
\sum_{i=-1}^{j-2}(-1)^{j-i}\dim \tilde H_{i}(\P)=
\begin{cases} 0\quad&\text{if $j=1,2$},\\
(-1)^{j-1}b_1 \quad&\text{if $3\le j\le n-1$},\\
((-1)^{n-1}+1)b_1\quad&\text{if $j=n$}.
\end{cases}
\]
Then, it follows from Theorem~\ref{theo:5-1} that
\begin{equation*} \label{eq:5-3}
\begin{split}
\sum_{i=0}^nh_i't^i=& \sum_{i=0}^nh_it^i+\sum_{j=3}^{n-1}(-1)^{j-1}b_1\binom{n}{j}t^j+((-1)^{n-1}+1)b_1t^n\\
=&  \sum_{i=0}^nh_it^i-b_1(1-t)^n+b_1(1-nt+\binom{n}{2}t^2)+b_1t^n\\
=&\sum_{i=0}^nh_it^i+b_1(1+t^n-(1-t)^n)-nb_1t+\binom{n}{2}b_1t^2\\
=&\sum_{i=0}^nb_{2i}t^i-nb_1t+\binom{n}{2}b_1t^2
\end{split}
\end{equation*}
where we used Theorem~\ref{theo:4-1} at the last identity.  This
proves the lemma when $n\ge 4$.  When $n=3$,
\[
\dim\tilde H_i(\P)=\begin{cases} 2b_1 \quad&\text{if $i=1$},\\
1 \quad&\text{if $i=2$},\\
0\quad &\text{otherwise},
\end{cases}
\]
and the same argument as above shows that the lemma still holds
for $n=3$.  When $n=2$,
\[
\dim\tilde H_i(\P)=\begin{cases} b_1 \quad&\text{if $i=0$},\\
b_1+1 \quad&\text{if $i=1$},\\
0\quad &\text{otherwise},
\end{cases}
\]
and the same holds in this case too.
\end{proof}

\begin{remark}
One can check that
\[
\sum_{i=1}^{n-1}h_i''t^i=\sum_{i=1}^{n-1}b_{2i}t^i-nb_1(t+t^{n-1}).
\]
Therefore, $h_i''=h_i''(\P)$ is not necessarily equal to
$b_{2i}=b_{2i}(M)$ although both are symmetric.  This is not surprising
because $h_i''$ depends only on the boundary of $M/T$.  It would
be interesting to ask whether $h_i''(\P)\le b_{2i}(M)$ 
when the face poset of $\partial(M/T)$ is dual to $\P$ and
whether the equality can be attained for some such $M$ ($M$ may
depend on $i$).
\end{remark}

Now we are ready to prove Proposition \ref{prop:5-1}.
\begin{proof}[Proof of Proposition~\ref{prop:5-1}]
At first we suppose that
$\Bbbk$ is a field. By Proposition~\ref{prop:1-1}, we have
$\Z[\P]=H^{even}_T(M)$. The images of ring generators of
$H^*(BT;\Bbbk)$ by $\pi^*$ provide an h.s.o.p.
$\theta_1,\dots,\theta_n$ in $H^{even}_T(M;\Bbbk)=\Bbbk[\P]$. This fact
simply follows from the characterization of homogeneous systems of
parameters in face rings given by \cite[Th.5.4]{bu-pa04}. Thus we
have
\begin{equation} \label{eq:5-4}
F(H^{even}_T(M;\Bbbk)/(\pi^*(H^{2}(BT;\Bbbk))),t)=\sum_{i=0}^nb_{2i}(M)t^i-nb_1t+\binom{n}{2}b_1t^2
\end{equation}
by Lemma~\ref{lemm:5-2}.  Moreover, the graded ring homomorphism
in \eqref{eq:5-0}
\begin{equation} \label{eq:5-4-1}
\bar\iota^*\colon
\Bbbk[\P]/(\theta_1,\dots,\theta_n)=H^{even}_T(M;\Bbbk)/(\pi^*(H^{2}(BT;\Bbbk)))\to
H^{even}(M;\Bbbk)
\end{equation}
is surjective except in degree $2$ as remarked at the beginning of
this section. Therefore, the identity \eqref{eq:5-4} implies that
$\bar\iota^*$ in \eqref{eq:5-4-1} is an isomorphism except in
degrees $2$ and $4$.  Finally, the rank of the cokernel of
$\bar\iota^*$ in degree $2$ is $nb_1(M)$ by Lemma~\ref{lemm:2-1-1}
and the rank of the kernel of $\bar\iota^*$ in degree $4$ is
$\binom{n}{2}b_1$ by \eqref{eq:5-4}, proving
Proposition~\ref{prop:5-1} over fields.

Now we explain the case
$\Bbbk=\Z$.
The map $\pi^*\colon H^*(BT;\Bbbk)\to H_T^*(M;\Bbbk)$ coincides with the
map $\pi^*\colon H^*(BT;\Z)\to H_T^*(M;\Z)$ tensored with $\Bbbk$,
since both $H^*(BT;\Z)$ and $H_T^*(M;\Z)$ are $\Z$-torsion free.
In particular, the ideals $(\pi^*(H^2(BT;\Bbbk)))$ and
$(\pi^*(H^2(BT;\Z))\otimes \Bbbk)=(\pi^*(H^2(BT;\Z)))\otimes \Bbbk$
coincide in $H_T^*(M;\Bbbk)\cong H_T^*(M;\Z)\otimes \Bbbk$. Consider the
exact sequence
\[
(\pi^*(H^2(BT;\Z)))\to H_T^*(M;\Z)\to
H_T^*(M;\Z)/(\pi^*(H^2(BT;\Z)))\to 0
\]
The functor $-\otimes \Bbbk$ is right exact, thus the sequence
\[\begin{split}
&(\pi^*(H^2(BT;\Z)))\otimes \Bbbk\to H_T^*(M;\Z)\otimes \Bbbk \\
&\qquad\qquad\qquad\qquad\qquad\qquad\to
H_T^*(M;\Z)/(\pi^*(H^2(BT;\Z)))\otimes \Bbbk\to 0
\end{split}\]
is exact. These considerations show that
\[
H_T^*(M;\Z)/(\pi^*(H^2(BT;\Z)))\otimes \Bbbk \cong
H_T^*(M;\Bbbk)/(\pi^*(H^2(BT;\Bbbk)))
\]
Finally, the map
\[
\bar\iota^*\colon H_T^*(M;\Bbbk)/(\pi^*(H^2(BT;\Bbbk)))\to H^*(M,\Bbbk)
\]
coincides (up to isomorphism) with the map
\[
\bar\iota^*\colon H_T^*(M;\Z)/(\pi^*(H^2(BT;\Z)))\to H^*(M,\Z),
\]
tensored with $\Bbbk$. The statement of Proposition \ref{prop:5-1}
holds for any field thus holds for~$\Z$.
\end{proof}

We conclude this section with some observations on the kernel of
$\bar\iota^*$ in degree $4$ from the viewpoint of the Serre
spectral sequence.  Recall
\[
H^4_T(M)=\F^{0,4}\supset \F^{1,3}\supset \F^{2,2}\supset
\F^{3,1}\supset \F^{4,0}\supset \F^{5,-1}=0
\]
where $\F^{p,q}/\F^{p+1,q-1}=E^{p,q}_\infty$. Since
$E^{p,q}_2=H^p(BT)\otimes H^q(M)$, we have $E_\infty^{p,q}=0$ for $p$ odd.
Therefore,
\[
\rank H^4_T(M)=\rank E_\infty^{0,4}+\rank E_\infty^{2,2}+\rank
E_\infty^{4,0},
\]
where we know $E_\infty^{0,4}=E_2^{0,4}= H^4(M)$ and
$E_\infty^{4,0}=E_2^{4,0}=H^4(BT)$.  As for $E_\infty^{2,2}$, we
recall that
\begin{equation*}
d_2\colon E_2^{p,2}\to E^{p+2,1}_2 \quad\text{is surjective for every $p\ge 0$}
\end{equation*}
by \eqref{eq:2-1-8}.  Therefore, noting $H^3(M)=0$, one sees
$E_3^{2,2}=E_\infty^{2,2}$.  It follows that
\[
\rank E_\infty^{2,2}=\rank E_2^{2,2}-\rank
E_2^{4,1}=nb_2-\binom{n+1}{2}b_1.
\]
On the other hand, $\rank E_\infty^{0,2}=b_2-nb_1$ and there is a
product map
\[
\varphi\colon E_\infty^{0,2}\otimes E_\infty^{2,0}\to
E_\infty^{2,2}.
\]
The image of this map lies in the ideal $(\pi^*(H^2(BT))$ and the
rank of the cokernel of this map is
\[
nb_2-\binom{n+1}{2}b_1-n(b_2-nb_1)=\binom{n}{2}b_1.
\]
Therefore
\[
\rank E_\infty^{0,4}+\rank \coker\varphi=b_4+\binom{n}{2}b_1
\]
which agrees with the coefficient of $t^2$ in
$F(H^{even}_T(M)/(\pi^*(H^{2}(BT))),t)$ by \eqref{eq:5-4}. This
suggests that the cokernel of $\varphi$ could correspond to the
kernel of $\bar\iota^*$ in degree 4.

\section{4-dimensional case}\label{sect4dimCase}

In this section, we explicitly describe the kernel of
$\bar\iota^*$ in degree $4$ when $n=2$, that is, when $M$ is of
dimension $4$. In this case, $\partial(M/T)$ is the union of
$b_1+1$ closed polygonal curves.

First we recall the case when $b_1=0$.  In this case,
$H^{even}_T(M)=H^*_T(M)$.  Let  $\partial(M/T)$ be an $m$-gon and
$v_1,\dots,v_m$ be the primitive edge vectors in the multi-fan of
$M$, where $v_i$ and $v_{i+1}$ span a 2-dimensional cone for every
$i=1,2,\dots,m$ (see \cite{ma-pa13}).  Note that $v_i\in H_2(BT)$
and we understand $v_{m+1}=v_1$ and $v_0=v_m$ in this section.
Since $\{v_j, v_{j+1}\}$ is a basis of $H_2(BT)$ for every $j$, we have
$\det(v_j,v_{j+1})=\pm 1$.

Let $\tau_i\in H^2_T(M)$ be the equivariant Poincar\'e dual class to the
characteristic submanifold corresponding to $v_i$.  Then we have
\begin{equation} \label{eq:5-5}
\pi^*(u)=\sum_{i=1}^m\langle u,v_i\rangle\tau_i\quad \text{for every $u\in H^2(BT)$},
\end{equation}
where $\langle\ ,\ \rangle$ denotes the natural pairing between
cohomology and homology, (see \cite{masu99} for example). We
multiply both sides in \eqref{eq:5-5} by $\tau_i$.  Then, since
$\tau_i\tau_j=0$ if $v_i$ and $v_j$ do not span a 2-dimensional
cone, \eqref{eq:5-5} turns into
\begin{equation} \label{eq:5-7}
0=\langle u,v_{i-1}\rangle \tau_{i-1}\tau_i+\langle
u,v_i\rangle\tau_i^2+\langle u,v_{i+1}\rangle\tau_i\tau_{i+1}
\quad\text{in $H^*_T(M)/(\pi^*(H^2(BT)))$.}
\end{equation}
If we take $u$ with $\langle u,v_i\rangle=1$, then \eqref{eq:5-7}
shows that $\tau_i^2$ can be expressed as a linear combination of
$\tau_{i-1}\tau_i$ and $\tau_{i}\tau_{i+1}$.  If we take
$u=\det(v_i,\ )$, then $u$ can be regarded as an element of $H^2(BT)$
because $H^2(BT)=\Hom(H_2(BT),\Z)$. Hence, \eqref{eq:5-7} reduces
to
\begin{equation} \label{eq:5-8}
\det(v_{i-1},v_i)\tau_{i-1}\tau_i=\det(v_i,v_{i+1})\tau_i\tau_{i+1}
\quad\text{in $H^*_T(M)/(\pi^*(H^2(BT)))$.}
\end{equation}
Finally we note that $\tau_i\tau_{i+1}$ maps to the cofundamental
class of $M$ up to sign. We denote by $\mu\in H^4_T(M)$ the
element (either $\tau_{i-1}\tau_i$ or $-\tau_{i-1}\tau_i$) which
maps to the cofundamental class of $M$.

When $b_1\ge 1$, the above argument works for each component of
$\partial(M/T)$.  In fact, according to \cite{masu99},
\eqref{eq:5-5} holds in $H^*_T(M)$ modulo $H^*(BT)$-torsion but in
our case there is no $H^*(BT)$-torsion in $H^{even}_T(M)$ by
Proposition~\ref{prop:1-2}. Suppose that $\partial(M/T)$ consists
of $m_j$-gons for $j=1,2,\dots,b_1+1$.  To each $m_j$-gon, we have
the class $\mu_j\in H^4_T(M)$ (mentioned above as $\mu$).  Since
$\mu_j$ maps to the cofundamental class of $M$,  $\mu_i-\mu_j$
$(i\not=j)$ maps to zero in $H^4(M)$; so it is in the kernel of
$\bar{\iota}^*$.  The subgroup of $H^{even}_T(M)/(\pi^*(H^2(BT)))$
in degree 4 generated by $\mu_i-\mu_j$ $(i\not=j)$ has the desired
rank $b_1$.

%

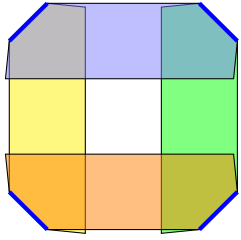
\begin{figure}[h]
    \begin{center}
        \begin{tikzpicture}[scale=.5]
            \pgfsetfillopacity{0.5}
            \filldraw[fill=yellow] (1,0)--(2,-0.1)--(2,5.9)--(1,6)--(0,5)--(0,1)--cycle;
            \filldraw[fill=green] (4,-0.1)--(5,0)--(6,1)--(6,5)--(5,6)--(4,5.9)--cycle;
            \filldraw[fill=orange] (-0.1,2)--(0,1)--(1,0)--(5,0)--(6,1)--(5.9,2)--cycle;
            \filldraw[fill=blue!50] (-0.1,4)--(5.9,4)--(6,5)--(5,6)--(1,6)--(0,5)--cycle;
            \draw[ultra thick,blue] (1,0)--(0,1);
            \draw[ultra thick,blue] (5,0)--(6,1);
            \draw[ultra thick,blue] (6,5)--(5,6);
            \draw[ultra thick,blue] (0,5)--(1,6);
        \end{tikzpicture}
    \end{center}
    \caption{The origami template with four polygons}
\label{fig:cycle}
\end{figure}

\begin{example}
Take the 4-dimensional toric origami manifold $M$ corresponding to
the origami template shown on Figure \ref{fig:cycle} (Example 3.15
of \cite{ca-gu-pi11}). Topologically $M/T$ is homeomorphic to
$S^1\times [0,1]$ and the boundary of $M/T$ as a manifold with
corners consists of two closed polygonal curves, each having $4$
segments. The multi-fan of $M$ is the union of two copies of the
fan of $\C P^1\times \C P^1$ with the product torus action.
Indeed, if $v_1$ and $v_2$ are primitive edge vectors in the fan of $\C
P^1\times \C P^1$ which spans a 2-dimensional cone, then the other
primitive edge vectors $v_3,\dots,v_8$ in the multi-fan of $M$ are
\[
v_3=-v_1,\quad v_4=-v_2,\quad \text{and}\quad v_{i}=v_{i-4} \quad
\text{for $i=5,\dots,8$}
\]
and the 2-dimensional cones in the multi-fan are
\[
\begin{split}
&\angle v_1v_2,\quad \angle v_2v_3,\quad \angle v_3v_4,\quad \angle v_4v_1,\\
&\angle v_5v_6,\quad \angle v_6v_7,\quad \angle v_7v_8,\quad \angle v_8v_5,
\end{split}
\]
where $\angle vv'$ denotes the 2-dimensional cone spanned by
vectors $v$ and $v'$.  Note that
\begin{equation} \label{eq:5-9-0}
\text{$\tau_i\tau_j=0$ if $v_i, v_j$ do not span a 2-dimensional cone.}
\end{equation}

We have
\begin{equation} \label{eq:5-9}
\pi^*(u)=\sum_{i=1}^8\langle u,v_i\rangle \tau_i \quad \text{for every $u\in H^2(BT)$.}
\end{equation}
Let $\{v_1^*,v_2^*\}$ be the dual basis of $\{v_1,v_2\}$.  Taking
$u=v_1^*$ or $v_2^*$ , we see that
\begin{equation} \label{eq:5-9-1}
\tau_1+\tau_5=\tau_3+\tau_7,\quad \tau_2+\tau_6=\tau_4+\tau_8\quad\text{in $H^*_T(M)/(\pi^*(H^2(BT)))$}.
\end{equation}
Since we applied \eqref{eq:5-9} to the basis $\{v_1^*, v_2^*\}$ of
$H^2(BT)$, there is no other essentially new linear relation among
$\tau_i$'s.

Now, multiply the equations \eqref{eq:5-9-1} by $\tau_i$ and use
\eqref{eq:5-9-0}.  Then we obtain
\[
\begin{split}
&\tau_i^2=0\quad \text{for every $i$},\\
&(\mu_1:=)\tau_1\tau_2=\tau_2\tau_3=\tau_3\tau_4=\tau_4\tau_1,\\
&(\mu_2:=) \tau_5\tau_6=\tau_6\tau_7=\tau_7\tau_8=\tau_8\tau_5\quad\text{in $H^*_T(M)/(\pi^*(H^2(BT)))$}.
\end{split}
\]
Our argument shows that these together with \eqref{eq:5-9-0} are
the only degree two relations among $\tau_i$'s {in
$H^*_T(M)/(\pi^*(H^2(BT)))$}. The kernel of
\[
\bar\iota^*\colon H^{even}_T(M;\Q)/(\pi^*(H^{2}(BT;\Q)))\to
H^{even}(M;\Q)
\]
in degree 4 is spanned by $\mu_1-\mu_2$.
\end{example}

\section{On the cokernel of $\bar\iota^*$ in degree $2$} \label{sectCokernel}

In this section we describe the elements of $H^2(M)$ that do not
lie in the image of the map \eqref{eq:5-4-1}. In fact, we describe
geometrically the homology $(2n-2)$-cycles, which are Poincar\'e
dual classes to the elements in $H^2(M)$. A very similar technique was used in
\cite{po-sa11} to calculate the homology of 4-dimensional torus
manifolds whose orbit spaces are polygons with holes. In contrast
to \cite{po-sa11} we do not introduce particular cell structures
on $M$, because this approach becomes more complicated for higher
dimensions.

Denote the orbit space $M/T$ by $Q$, so $Q$ is a manifold with
corners and acyclic proper faces, and $Q$ is homotopy equivalent
to a wedge of $b_1$ circles. Also let $q\colon M\to Q$ denote the
projection to the orbit space, and $\Gamma_i$ be the
characteristic subgroup, i.e. the stabilizer of orbits in
$F_i^{\circ}\subset Q$. For each face $G$ of $Q$, we denote by
$\Gamma_G$ the stabilizer subgroup of orbits $x\in G^{\circ}$.
Thus $\Gamma_G=\prod_i\Gamma_i\subset T$, where the product is
taken over all $i$ such that $G\subseteq F_i$. The origami
manifold $M$ is homeomorphic to the model
$ Q\times T/\sim,$
where $(x_1,t_1)\sim(x_2,t_2)$ if $x_1=x_2\in G^{\circ}$ and
$t_1t_2^{-1}\in \Gamma_G$ for some face $G\subset Q$. This fact is
a consequence of a general result of the work \cite{yo11}. In the
following we identify $M$ with the model $Q\times T/\sim$.

Consider a homology cycle $\sigma\in H_{n-1}(Q,\partial Q)$. Note
that $\sigma$ is Poincare--Lefschetz dual to some element of
$H^1(Q)\cong H^1(\bigvee_{b_1}S^1)\cong \Z^{b_1}$. Let $\sigma$ be
represented by a pseudomanifold $\xi\colon(L,\partial L)\to
(Q,\partial Q)$, where $\dim L=n-1$, and let $[L]\in
H_{n-1}(L,\partial L)$ denote the fundamental cycle, so that
$\xi_*([L])=\sigma$. We assume that $\xi(L\setminus
\partial L)\subset Q\setminus\partial Q$. Moreover, since every face of
$\partial Q$ is acyclic, we may assume that $\xi(\partial L)$ is
contained in $\partial Q^{(n-2)}$, --- the codimension $2$
skeleton of $Q$. A pseudomanifold $(L,\partial L)$ defines a
collection of $(2n-2)$-cycles in homology of $M$, one for each
codimension-one subtorus of $T$, by the following construction.

\begin{con}\label{conNonEquiCycle}
First fix a coordinate splitting of the torus,
$T=\prod_{i\in[n]}T_i^1$ in which the orientation of each $T_i^1$
is arbitrary but fixed. For each $j\in [n]$ consider the subtorus
$T_{\jh}=T^{[n]\setminus j}=\prod_{i\in[n]\setminus j}T_i^1$, and
let $\inc\colon T_{\jh}\to T$ be the inclusion map. Given a
pseudomanifold $(L,\partial L)$ as in the previous paragraph,
consider the space $L\times T_{\jh}$ and the quotient construction
$(L\times T_{\jh})/\sim_*$, where the identification $\sim_*$ is
naturally induced from $\sim$ by the map $\xi$. Since
$\xi(\partial L)\subset \partial Q^{(n-2)}$, the space $(\partial
L\times T_{\jh})/\sim_*$ has dimension at most $2n-4$. Thus
$(L\times T_{\jh})/\sim_*$ has the fundamental cycle $V_{L,j}$.
Indeed, there is a diagram:
\begin{equation}\label{eqFundCycleOfCut}
\xymatrix@C-=0.26cm{ &&H_{n-1}\left(L,\partial L\right)\otimes H_{n-1}(T_{\jh}) \ar@{->}[d]^{\cong \mbox{ (Kunneth)}} &\\
0\ar@{=}[d]&& H_{2n-2}(L\times T_{\jh},\partial L\times T_{\jh})
\ar@{->}[d]^{\cong \mbox{
(excision)}}&0\ar@{=}[d]\\
H_{2n-2}(\frac{\partial L\times T_{\jh}}{\sim_*}) \ar@{->}[r]&
H_{2n-2}(\frac{L\times T_{\jh}}{\sim_*})\ar@{->}[r]^(0.41){\cong}&
H_{2n-2}(\frac{L\times T_{\jh}}{\sim_*}, \frac{\partial L\times
T_{\jh}}{\sim_*}) \ar@{->}[r]& H_{2n-3}(\frac{\partial L\times
T_{\jh}}{\sim_*}) }
\end{equation}
Let $T_{\jh}$ be oriented by the splitting $T\cong T_{\jh}\times
T_j^1$. Given such an orientation, there exists the distinguished
generator $\Omega_j\in H_{n-1}(T_{\jh})$. Then the fundamental
cycle $V_{L,j}\in H_{2n-2}((L\times T_{\jh})/\sim_*)$ is defined
as the image of $[L]\otimes \Omega_j\in H_{n-1}(L,\partial
L)\otimes H_{n-1}(T_{\jh})$ under the isomorphisms of diagram
\eqref{eqFundCycleOfCut}. The induced map
\[
\zeta_{L,j}\colon (L\times T_{\jh})/\sim_* \to (Q\times
T_{\jh})/\sim \hookrightarrow (Q\times T)/\sim=M
\]
determines the element $x_{L,j}=(\zeta_{L,j})_*(V_{L,j})\in
H_{2n-2}(M)$.
\end{con}

\begin{proposition}
Let $\{\sigma_1,\ldots,\sigma_{b_1}\}$ be a basis of
$H_{n-1}(Q,\partial Q)$, and let $L_1$, \dots, $L_{b_1}$ be
pseudomanifolds representing these cycles and satisfying the
restrictions stated above. Consider the set of homology classes
$\{x_{L,j}\}\subset H_{2n-2}(M)$, where $L$ runs over the set
$\{L_1,\ldots,L_{b_1}\}$ and $j$ runs over $[n]$. Then the set of
Poincare dual classes of $x_{L,j}$ is a basis of the cokernel
$H^2(M)/\iota^*(H^2_T(M))$.
\end{proposition}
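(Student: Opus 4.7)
The rank of the cokernel of $\iota^*\colon H^2_T(M)\to H^2(M)$ is $nb_1(M)$ by Lemma~\ref{lemm:2-1-1}, and Construction~\ref{conNonEquiCycle} produces exactly $nb_1$ classes $x_{L_i,j}$, so it suffices to show that these descend to a linearly independent set in $H^2(M)/\iota^*(H^2_T(M))$. Under Poincar\'e duality the subgroup $\iota^*(H^2_T(M))$ corresponds to the subgroup of $H_{2n-2}(M)$ generated by the fundamental classes $[M_F]$ of the characteristic submanifolds $M_F=q^{-1}(F)$, so equivalently one must show that $\{x_{L_i,j}\}$ is linearly independent in $H_{2n-2}(M)/\mathrm{span}\{[M_F]\}$.

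The strategy is to detect each $x_{L_i,j}$ by intersection pairing against an explicit $2$-cycle supported in the free part $M^\circ$. By hypothesis $M^\circ\cong T\times(M^\circ/T)$; fix such a trivialization and write $Q^\circ:=M^\circ/T$, so that $Q^\circ\hookrightarrow Q$ is a homotopy equivalence and $H_1(Q^\circ)\cong H_1(Q)\cong\Z^{b_1}$. Choose embedded loops $\gamma_1,\dots,\gamma_{b_1}\subset Q^\circ$ representing a basis of $H_1(Q^\circ)$ that is intersection-dual under Poincar\'e--Lefschetz duality to the basis $\{\sigma_i\}$ of $H_{n-1}(Q,\partial Q)$, so that the geometric intersection numbers inside $Q^\circ$ satisfy $\langle L_l,\gamma_i\rangle=\delta_{li}$. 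For each $k\in[n]$ and each $i$ set
\[
D_{k,i}:=T_k^1\times\gamma_i\subset T\times Q^\circ=M^\circ\subset M,
\]
an embedded $2$-torus defining a class in $H_2(M)$.

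The key step is the intersection pairing computation
\[
\langle x_{L_l,j},[D_{k,i}]\rangle=\pm\delta_{jk}\delta_{li},
\]
which I would carry out inside $M^\circ$, noting that $x_{L_l,j}$ restricts there to the image of $T_{\jh}\times L_l$. In the $T$-factor one has $T_{\jh}\cap T_k^1=\{e\}$ when $k=j$ and $T_{\jh}\cap T_k^1=T_k^1$ when $k\neq j$. In the former case the two cycles meet transversely in $\{e\}\times(L_l\cap\gamma_i)$ with signed count $\pm\delta_{li}$. In the latter case I would translate $D_{k,i}$ by a generic element $\epsilon\in T_j^1\setminus\{e\}$ via the ambient $T$-action on $M$, which is a homotopy of cycles in $M$; since $T_k^1\subset T_{\jh}$ for $k\neq j$ and $\epsilon\notin T_{\jh}$, one has $\epsilon\cdot T_k^1\cap T_{\jh}=\emptyset$, so the translated $2$-cycle is disjoint from $x_{L_l,j}$ and the intersection number vanishes.

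Every characteristic submanifold $M_F$ lies in $q^{-1}(\partial Q)=M\setminus M^\circ$ and is disjoint from each $D_{k,i}\subset M^\circ$, so $\langle[M_F],[D_{k,i}]\rangle=0$. The intersection pairing therefore descends to a well-defined $\Z$-bilinear pairing between $H_{2n-2}(M)/\mathrm{span}\{[M_F]\}$ and the span of the $[D_{k,i}]$, and by the computation above its Gram matrix with respect to $\{x_{L_l,j}\}$ and $\{[D_{k,i}]\}$ is $\pm$-diagonal and hence nondegenerate. This forces the classes $\{x_{L_l,j}\}$ to be linearly independent in the quotient, and together with the rank count it identifies their Poincar\'e duals with a basis of the cokernel. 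The main obstacle will be the push-off step when $k\neq j$: one must verify that $T$-translation provides a genuine homotopy of singular chains representing $[D_{k,i}]$ in $M$ (not merely an ambient isotopy inside $M^\circ$) and carefully track the orientation signs coming from the splitting $T=T_{\jh}\times T_j^1$ together with the orientations of $\Omega_j$, $[L_l]$, and $\gamma_i$.
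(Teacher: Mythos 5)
Your proposal is correct and takes essentially the same route as the paper: the paper also pairs $x_{L,j}$ against $2$-tori of the form $S_i\times T^1_l$ (a circle in $Q^\circ$ dual to $\sigma_i$ times a coordinate circle) lying in the free part, observes that the equivariant cycles sit in $q^{-1}(\partial Q)$ and hence pair trivially with these tori, and concludes by the rank count of Lemma~\ref{lemm:2-1-1}. The push-off and orientation details you flag for $k\neq j$ are left implicit in the paper but do not change the argument.
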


\begin{proof}
Consider disjoint circles $S_1,\dots,S_{b_1}\subset Q^{\circ}$
whose corresponding homology classes $[S_1],\dots,[S_{b_1}]\in
H_1(Q)$ are dual to $\sigma_1,\ldots,\sigma_{b_1}$. Thus for the
intersection numbers we have $[S_i]\cap \sigma_k=\delta_{ik}$.
Consider 2-dimensional submanifolds of the form $S_i\times
T^1_{l}\subset M$, where $l\in [n]$. They lie in
$q^{-1}(Q^{\circ})\subset M$. Let $[S_i\times T^1_{l}]\in H_2(M)$
be the homology classes represented by these submanifolds. Then
\begin{equation}\label{eq:pairing}
[S_i\times T^1_{l}]\cap x_{L_k,j} = \delta_{ik}\delta_{lj},
\end{equation}
since all the intersections lie in $q^{-1}(Q^{\circ}) =
Q^{\circ}\times T$.

The equivariant cycles of $M$ sit in $q^{-1}(\partial Q)$. Thus
the intersection of $S_i\times T^1_{l}\subset q^{-1}(Q^{\circ})$
with any equivariant cycle is empty. Nondegenerate pairing
\eqref{eq:pairing} shows that the set $\{x_{L,j}\}$ is linearly
independent modulo equivariant cycles. Its cardinality is
precisely $nb_1$ and the statement follows from
Lemma~\ref{lemm:2-1-1}.
\end{proof}

\begin{remark} The element $x_{L,j}\in H_{2n-2}(M)$ depends on the
representing pseudomanifold $L$, not only on its homology class in
$H_{n-1}(Q,\partial Q)$. The classes corresponding to different
representing pseudomanifolds are connected by linear relations
involving characteristic submanifolds. We describe these relations
next.
\end{remark}

At first let us introduce orientations on the objects under
consideration. We fix an orientation of the orbit space $Q$. This
defines an orientation of each facet ($F_i$ is oriented by
$TF_i\oplus \nu\cong TQ$, where the inward normal vector of the
normal bundle $\nu$ is set to be positive). Since the torus $T$ is
oriented, we have a distinguished orientation of $M = Q\times
T/\sim$. Recall that $\Gamma_i$ is the characteristic subgroup
corresponding to a facet $F_i\subset Q$. Since the action is
locally standard, $\Gamma_i$ is a $1$-dimensional connected
subgroup of $T$. Let us fix orientations of all characteristic
subgroups (this choice of orientations is usually called an
omniorientation). Then every $\Gamma_i$ can be written as
\begin{equation}\label{eq:charsubgrp}
\Gamma_i=\{(t^{\lambda_{i,1}},\ldots,t^{\lambda_{i,n}})\in T\mid
t\in T^1\},
\end{equation}
where $(\lambda_{i,1},\ldots,\lambda_{i,n})\in \Z^n$ is a uniquely
determined primitive integral vector.

Let us orient every quotient torus $T/\Gamma_i$ by the following
construction. For each $\Gamma_i$ choose a codimension 1 subtorus
$\Upsilon_i\subset T$ such that the product map $\Upsilon_i\times
\Gamma_i\to T$ is an isomorphism. The orientations of $T$ and
$\Gamma_i$ induce an orientation of $\Upsilon_i$. The quotient map
$T\to T/\Gamma_i$ induces an isomorphism between $\Upsilon_i$ and
$T/\Gamma_i$ providing the quotient group with an orientation. The
orientation of $T/\Gamma_i$ defined this way does not depend on
the choice of the auxiliary subgroup $\Upsilon_i$.

Finally, the orientations on $F_i$ and $T/\Gamma_i$ give an
orientation of the characteristic submanifold $M_i=q^{-1}(F)$.
This follows from the fact that $M_i$ contains an open dense
subset $q^{-1}(F_i^{\circ}) = F_i^{\circ}\times(T/\Gamma_i)$.

\begin{con}
Let $F_i$ be a facet of $Q$, and $[F_i]\in H_{n-1}(F_i,\partial
F_i)$ its fundamental cycle. The cycles $[F_i]$ form a basis of
$$H_{n-1}(\partial Q,\partial Q^{(n-2)})=\bigoplus_{\mbox{facets}}
H_{n-1}(F_i,\partial F_i).$$
Let $\xi_\varepsilon\colon
(L_\varepsilon,\partial L_\varepsilon)\to (Q,\partial Q)$,
$\varepsilon=1,2$, be two pseudomanifolds representing the same
element $\sigma\in H_{n-1}(Q,\partial Q)$. Then there exists a
pseudomanifold $(N,\partial N)$ of dimension $n$ and a map
$\eta\colon N\to Q$ such that $L_1$ and $L_2$ are disjoint submanifolds
of $\partial N$, $\eta|_{L_\varepsilon}=\xi_\varepsilon$ for
$\varepsilon=1,2$, and $\eta(\partial N\setminus
(L_1^{\circ}\sqcup L_2^{\circ}))\subset
\partial Q$ (this follows from the geometrical definition of homology,
see \cite[App. A.2]{ru-sa}). The skeletal stratification of $Q$
induces a stratification on $N$. The restriction of the map $\eta$
sends $\partial N^{(n-2)}$ to $\partial Q^{(n-2)}$. Let $\delta$
be the connecting homomorphism
\[
\delta\colon H_n(N,\partial N)\to H_{n-1}(\partial
N,\partial N^{(n-2)})
\]
in the long exact sequence of the triple $(N,\partial N,\partial
N^{(n-2)})$. Consider the sequence of homomorphisms
\begin{multline*}
H_n(N,\partial N)\xrightarrow{\delta} H_{n-1}(\partial N,\partial
N^{(n-2)}) \cong \\ H_{n-1}(L_1,\partial L_1)\oplus
H_{n-1}(L_2,\partial L_2)\oplus H_{n-1}(\partial N\setminus
(L_1^{\circ}\cup L_2^{\circ}), \partial N^{(n-2)}) \\ \xrightarrow{\id\oplus\id\oplus \eta_*}
H_{n-1}(L_1,\partial L_1)\oplus H_{n-1}(L_2,\partial L_2)\oplus
H_{n-1}(\partial Q,\partial Q^{(n-2)}).
\end{multline*}
This sequence of homomorphisms sends the fundamental cycle $[N]\in
H_n(N,\partial N)$ to the element
\begin{equation}\label{eqRelPseudBordism}
\left([L_1],-[L_2],\sum\nolimits_i\alpha_i[F_i]\right)
\end{equation}
of the group $H_{n-1}(L_1,\partial L_1)\oplus H_{n-1}(L_2,\partial
L_2)\oplus H_{n-1}(\partial Q,\partial Q^{(n-2)})$, for some
coefficients $\alpha_i\in \Z$.
\end{con}

\begin{proposition}\label{propLinearRelation}
If $L_1$ and $L_2$ are two pseudomanifolds representing a class
$\sigma\in H_{n-1}(Q,\partial Q)$, then for each $j\in [n]$
\begin{equation}\label{eqZeroCombination}
x_{L_1,j}-x_{L_2,j} +
\sum_{\mbox{facets}}\alpha_i\lambda_{i,j}[M_i]=0\quad\mbox{ in
}H_{2n-2}(M).
\end{equation}
Here $M_i$ is the characteristic submanifold of $M$ corresponding
to $F_i$, the numbers $\alpha_i$ are given by
\eqref{eqRelPseudBordism}, and the numbers $\lambda_{i,j}$ are
given by \eqref{eq:charsubgrp}.
\end{proposition}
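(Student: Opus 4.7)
The plan is to realise the relation \eqref{eqZeroCombination} as the boundary of an explicit $(2n-1)$-chain in $M$, obtained by applying the cross-product-with-$T_{\jh}$ construction of Construction~\ref{conNonEquiCycle} not to the pseudomanifolds $L_1,L_2$ themselves, but to the bordism $(N,\partial N)$ between them. Concretely, I would form $W:=(N\times T_{\jh})/\sim_*$, where $\sim_*$ is induced from $\sim$ by the map $\eta\colon N\to Q$. Since $\eta(\partial N^{(n-2)})\subset\partial Q^{(n-2)}$, the preimage in $W$ of the codimension $\geq 2$ strata has dimension at most $2n-3$, so a K\"unneth--excision diagram completely analogous to \eqref{eqFundCycleOfCut} (with $N$ in place of $L$) produces a relative fundamental class in $H_{2n-1}$ from $[N]\otimes\Omega_j$. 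Pushing this class into $M$ by the natural map $W\to M$ and then taking its boundary gives a $(2n-2)$-cycle in $M$ that represents $0$ in $H_{2n-2}(M)$ because it bounds.

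The next step is to identify that boundary explicitly. The description \eqref{eqRelPseudBordism} of $\partial[N]\in H_{n-1}(\partial N,\partial N^{(n-2)})$ as $\bigl([L_1],-[L_2],\sum_i\alpha_i[F_i]\bigr)$ induces a corresponding decomposition of $\partial[W]$ into three summands. The first two summands give, by definition, $x_{L_1,j}$ and $-x_{L_2,j}$. The third summand is a sum over facets; the piece coming from $F_i$ is a $(2n-2)$-cycle supported on $M_i=q^{-1}(F_i)$, obtained from $(F_i\times T_{\jh})/\sim_*$, whose restriction over the dense open subset $F_i^\circ$ is pulled back from $M_i^\circ=F_i^\circ\times T/\Gamma_i$ along the group homomorphism $T_{\jh}\xrightarrow{\inc}T\to T/\Gamma_i$.

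The main obstacle, and essentially the only nontrivial computation, is a sign-sensitive lattice calculation showing that the cycle $(F_i\times T_{\jh})/\sim_*$ represents $\lambda_{i,j}[M_i]$ in $H_{2n-2}(M_i)$. At the level of integral lattices the relevant homomorphism is $\Z^{[n]\setminus j}\to \Z^n/\Z\langle(\lambda_{i,1},\dots,\lambda_{i,n})\rangle$, and a direct computation yields trivial kernel and cokernel $\Z/\lambda_{i,j}\Z$ when $\lambda_{i,j}\neq 0$, so the degree of $T_{\jh}\to T/\Gamma_i$ equals $\pm\lambda_{i,j}$. The sign is pinned down by comparing the orientation of $T/\Gamma_i$ induced by any complement $\Upsilon_i$ with $\Upsilon_i\times\Gamma_i\cong T$ to the chosen orientations of $T$ and $T_{\jh}$; when $\lambda_{i,j}=0$, the map is not surjective and the degree vanishes for the same reason that the coefficient on the right-hand side vanishes, so the formula remains valid.

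Combining these three pieces, the vanishing of $\partial[W]$ in $H_{2n-2}(M)$ reads exactly as
\[
x_{L_1,j}-x_{L_2,j}+\sum_i\alpha_i\lambda_{i,j}[M_i]=0,
\]
as asserted. Apart from the degree calculation above, the remaining work is purely orientational bookkeeping, consistent with the omniorientation fixed before Construction~\ref{conNonEquiCycle}: one has to verify that the orientation conventions used to define $x_{L,j}$ and the orientation of $M_i$ inherited from $F_i$ and $T/\Gamma_i$ combine so that the degree $\lambda_{i,j}$ enters with the sign claimed in the statement.
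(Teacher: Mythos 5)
Your proposal is correct and follows essentially the same route as the paper: the paper likewise forms $(N\times T_{\jh})/\sim_*$, notes it is a $(2n-1)$-pseudomanifold with boundary mapping to $M$ whose boundary represents the left-hand side of the claimed relation, and reduces everything to the technical lemma that the composite $T_{\jh}\hookrightarrow T\twoheadrightarrow T/\Gamma_i$ sends $\Omega_j$ to $\lambda_{i,j}\Phi_i$. The only difference is cosmetic: the paper nails the coefficient with sign in one stroke via a Cramer's-rule determinant computation in $H_1(T)$, whereas you obtain $\pm\lambda_{i,j}$ from the lattice kernel/cokernel and defer the sign to the orientation bookkeeping you describe---the same verification in slightly different form.
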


\begin{proof}
Choose a relative pseudomanifold bordism $N$ between $L_1$ and
$L_2$ and consider the space $(N\times T_{\jh})/\sim_*$. Here
$\sim_*$ is the equivalence relation induced from $\sim$ by the
map $\eta$. We have a map $(\eta\times\inc)/\sim\colon(N\times
T_{\jh})/\sim_*\to M$. By the diagram chase, similar to
\eqref{eqFundCycleOfCut}, the space $(N\times T_{\jh})/\sim_*$ is
a $(2n-1)$-pseudomanifold with boundary. Its boundary represents
the element \eqref{eqZeroCombination}. Thus this element vanishes
in homology. We only need to prove the following technical lemma.

\begin{lemma}
Let $F_i$ be a facet, and let $\Gamma_i$ be its characteristic subgroup
encoded by the vector $(\lambda_{i,1},\ldots,\lambda_{i,1})\in
\Z^n$ for $j\in[n]$. Let $\Omega_j\in H_{n-1}(T_{\jh})$ and
$\Phi_i\in H_{n-1}(T/\Gamma_i)$ be the fundamental classes (in the
orientations introduced previously). Then the composite map
$T_{\jh}\hookrightarrow T\twoheadrightarrow T/\Gamma_i$ sends
$\Omega_j$ to $\lambda_{i,j}\Phi_i$.
\end{lemma}

\begin{proof}
Let $\{\mathbf{e}_s\mid s\in[n]\}$ be the positive basis of
$H_1(T)$ corresponding to the splitting $T=\prod_sT_s^1$, and let
$\{\mathbf{f}_r\mid r\in[n]\}$ be a positive basis of $H_1(T)$
such that $\mathbf{f}_n=(\lambda_{i,1},\ldots,\lambda_{i,n})$.
Thus $\Omega_j=(-1)^{n-j}\mathbf{e}_1\wedge \ldots\wedge
\widehat{\mathbf{e}_j}\wedge\ldots\wedge \mathbf{e}_n$. Let $D$ be
the matrix of basis change,
$\mathbf{e}_s=\sum_{r=1}^nD_s^r\mathbf{f}_r$, and let $C=D^{-1}$.
The element $\Omega_j$ maps to
\[
(-1)^{n-j}\det(D_s^r)_{\begin{subarray}{l} r\in \{1,\ldots,n-1\}
\\ s\in \{1,\ldots,\hat{j},\ldots,n\}\end{subarray}}
\]
which is equal to the element $C_n^j$ by Cramer's rule. $C_n^j$ is
the $j$th coordinate of $\mathbf{f}_n$ in the basis
$\{\mathbf{e}_s\}$. Thus, by construction, $C_n^j =
\lambda_{i,j}$.
\end{proof}

This proves the proposition.
\end{proof}

Proposition \ref{propLinearRelation} gives the idea how to
describe the multiplication in $H^*(M)$. Equivalently, we need to
describe the intersections of cycles in $H_*(M)$. Intersections of
equivariant cycles are known --- they are encoded by the face ring
of $Q$. To describe the intersections of additional cycles
$x_{L,j}$ sometimes we can do the following:

(1) Let $M_F$ be the face submanifold of $M$, corresponding to the
face $F\subset~Q$. If $F\cap \partial L=\varnothing$, then
$[M_F]\cap x_{L,j}=0$ in the homology of $M$. Otherwise, in many
cases we can choose a different representative $L'$ of the same
homology class as $L$ with the property $\partial L'\cap F=0$.
Then, by Proposition \ref{propLinearRelation},
\[\begin{split}
[M_F]\cap
x_{L,j}&=[M_F]\cap x_{L',j}+[M_F]\cap \sum_{\mbox{facets}}
\alpha_i\lambda_{i,j}[M_i]\\
&=\sum_{\mbox{facets}}
\alpha_i\lambda_{i,j}[M_F]\cap [M_i]
\end{split}
\]
 which can be computed using
relations in $\Bbbk[Q]/(\theta_1,\dots,\theta_n)$.

(2) To compute the intersection of two elements of the form
$x_{L_1,j_1}$ and $x_{L_2,j_2}$ sometimes we can use the same
trick: find a pseudomanifold $L_1'$ which does not intersect $L_2$
and replace $x_{L_1,j_1}$ by $x_{L'_1,j_1}+\sum_i
\alpha_i\lambda_{i,j_1}[M_i]$. Then the intersection
$x_{L'_1,j_1}\cap x_{L_2,j_2}$ vanishes and intersections of
$x_{L_2,j_2}$ with $[M_i]$ are computed using~(1).


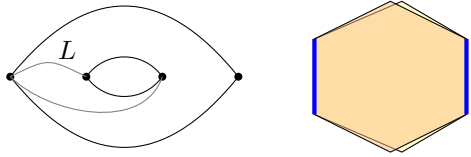
\begin{figure}[h]
\begin{center}
    \begin{tikzpicture}[scale=.5]
        \draw  (0,0)..controls (0.5,0.7) and (1.5,0.7)..(2,0);
        \draw  (0,0)..controls (0.5,-0.7) and (1.5,-0.7)..(2,0);
        \draw (-2,0)..controls (0,2.5) and (2,2.5)..(4,0);
        \draw (-2,0)..controls (0,-2.5) and (2,-2.5)..(4,0);
        \fill (-2,0) circle(3pt);
        \fill (0,0) circle(3pt);
        \fill (2,0) circle(3pt);
        \fill (4,0) circle(3pt);
        \draw[gray] (-2,0)..controls (-1,.5)..(0,0);
        \draw[gray] (-2,0)..controls (-1,-1) and (1.5,-1.5)..(2,0);
        \draw (-0.5,0.7) node{$L$};
        \pgfsetfillopacity{0.5}
        \filldraw[fill=yellow!50] (6,-1)--(6,1)--(8.3,2)--(10,1)--(10,-1)--(8.3,-2)--cycle;
        \filldraw[fill=orange!50] (6,-1)--(6,1)--(8,2)--(10,1)--(10,-1)--(8,-2)--cycle;
        \draw[ultra thick, blue] (6,-1)--(6,1);
        \draw[ultra thick, blue] (10,-1)--(10,1);
    \end{tikzpicture}
\end{center}
\caption{Manifold with corners $Q$ for which the products of extra
elements cannot be calculated using linear relations of
Proposition~\ref{propLinearRelation}} \label{fig:2gons}
\end{figure}

\begin{remark}
This general idea may not work in particular cases. Figure
\ref{fig:2gons} provides an example of $Q$ such that every
pseudomanifold $L$ with $\partial L\subset
\partial Q^{(0)}$, representing the generator of $H_1(Q,\partial Q)$,
intersects every facet of $Q$. Unfortunately, such situations may
appear as realizations of origami templates. The picture on the
right shows an origami template, whose geometric realization is
the manifold with corners shown on the left.
\end{remark}

\section{Some observation on non-acyclic
cases}\label{sectNonAcyclicFaces}

The face acyclicity condition we assumed so far is not preserved
under taking the product with a symplectic toric manifold $N$, but
every face of codimension $\ge \frac{1}{2}\dim N+1$ is acyclic.
Motivated by this observation, we will make the following
assumption on our toric origami manifold $M$ of dimension $2n$:
\begin{quote}
every face of $M/T$ of codimension $\ge r$ is acyclic for some integer~$r$.
\end{quote}
Note that $r=1$ in the previous sections. Under the above
assumption, the arguments in Section~\ref{sectBettiNumbers} work
to some extent in a straightforward way.  The main point is that
Lemma~\ref{lemm:3-5} can be generalized as follows.

\begin{lemma} \label{lemm:7-1}
The homomorphism $H^{2j}(\tM)\to H^{2j}(Z_+\cup Z_-)$ induced from
the inclusion is surjective for $j\ge r$.
\end{lemma}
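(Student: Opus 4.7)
My plan is to replay the proof of Lemma~\ref{lemm:3-5} almost verbatim, tracking where the acyclicity hypothesis on proper faces of $M/T$ is actually invoked and observing that this only happens in a step concerning codimension $j$ faces. Under the weaker hypothesis of this section, that step still goes through provided $j\ge r$, which is exactly the bound we need.

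First I would reuse the commutative square of Lemma~\ref{lemm:3-5} relating $H^{2j}(M')$, $H^{2j}(B_+\cup B_-)$, $H^{2j}(\tM)$ and $H^{2j}(Z_+\cup Z_-)$. The right vertical map $\pi_\pm^*$ is surjective by the Gysin sequence~\eqref{eq:3-4}, an argument that only uses that $B_\pm$ are symplectic toric manifolds and is entirely independent of the acyclicity hypothesis. The problem therefore reduces, exactly as before, to showing that $H^{2j}(M')\to H^{2j}(B_+\cup B_-)$ is surjective for $j\ge r$. Since $H^{2j}(B_\pm)$ is additively generated by the Poincar\'e duals $\tau_{K_\pm}$ of codimension $j$ faces $K_\pm$ of $F_\pm=B_\pm/T$, it suffices to realize each such generator from $H^{2j}(M')$.

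The core step is, for a codimension $j$ face $K_+$ of $F_+$, to choose a codimension $j$ face $L$ of $M'/T$ with $L\cap F_+=K_+$ and to prove $L\cap F_-=\emptyset$. Two cases need to be excluded. If $L\cap F_-=H_-$ with $H_-\neq K_-$, then the normal cone comparison in $P_\pm$ used in Lemma~\ref{lemm:3-5} applies verbatim and requires no acyclicity. If instead $L\cap F_-$ is the copy $K_-$ of $K_+$, then the reassembly of $M$ from $M'$ glues $L$ into a closed codimension $j$ face of $M/T$ that picks up a nontrivial cycle from traversing the cut, hence fails to be acyclic; our hypothesis demands that every face of $M/T$ of codimension $\ge r$ be acyclic, so this case is ruled out precisely when $j\ge r$. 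Once $L\cap F_-=\emptyset$ is established, $\tau_L$ restricts to $(\tau_{K_+},0)$ in $H^{2j}(B_+)\oplus H^{2j}(B_-)$, and by the symmetric construction we also hit the $(0,\tau_{K_-})$ classes, yielding the desired surjectivity.

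The main (and essentially only) obstacle is justifying the non-acyclicity assertion in the second case above, namely that the reassembled face carries a nontrivial loop. This follows because the folded facet $F$ was chosen to correspond to an edge $e$ in a nontrivial cycle of the template graph $G$, so the identification of $K_+$ with $K_-$ in $M$ introduces a corresponding loop into $L$ precisely as it does for the one-dimensional skeleton of $M/T$. Every other step of the proof is combinatorial, Gysin-sequence based, or formal, and so transfers without modification. This is what allows the surjectivity bound to relax from $j\ge 1$ to $j\ge r$.
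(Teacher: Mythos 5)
Your proposal is correct and takes essentially the paper's intended route: the paper establishes this lemma by re-running the proof of Lemma~\ref{lemm:3-5} verbatim, observing that the acyclicity hypothesis is invoked only to exclude the case $L\cap F_-=K_-$ through the non-acyclicity of the resulting codimension~$j$ face of $M/T$, and that this step needs only faces of codimension $\ge r$ to be acyclic, i.e.\ $j\ge r$. One minor remark: the glued face is non-acyclic simply because identifying $K_+$ with $K_-$ inside the connected face $L$ produces a nontrivial $1$-cycle (e.g.\ by a Mayer--Vietoris argument as for $M=\tM\cup(Z\times[-1,1])$), independently of your appeal to $e$ lying in a nontrivial cycle of $G$.
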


Using this lemma, we see that Lemma~\ref{lemm:3-6} turns into the
following.

\begin{lemma} \label{lemm:7-2} We have the relations
\[
\begin{split}
&\sum_{i=1}^r(b_{2i}(\tilde M)-b_{2i-1}(\tilde M))=\sum_{i=1}^r(b_{2i}(M)-b_{2i-1}(M))+b_{2r}(B)\\
&b_{2i}(\tilde M)-b_{2i-1}(\tilde M)=b_{2i}(M)-b_{2i-1}(M)+b_{2i}(B)-b_{2i-2}(B)\quad\text{for $i\ge r+1$}.
\end{split}
\]
\end{lemma}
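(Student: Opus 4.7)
The plan is to follow the structure of Lemma~\ref{lemm:3-6}, applying the Mayer-Vietoris sequence for the triple $(M,\tM,Z\times[-1,1])$, but with the understanding that Lemma~\ref{lemm:7-1} only guarantees surjectivity of the map $H^{2j}(\tM)\oplus H^{2j}(Z\times[-1,1])\to H^{2j}(Z_+\cup Z_-)$ in the range $j\ge r$. Consequently, the sequence splits into independent $6$-term pieces only above degree $2r$, and the low-degree portion must be treated as a single long exact sequence, so the two identities of the lemma must be proved separately.

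For $i\ge r+1$, the surjectivity at degrees $2(i-1)$ and $2i$, corresponding respectively to $j=i-1\ge r$ and $j=i\ge r+1$, produces the $6$-term exact sequence
\begin{align*}
0\to{} & H^{2i-1}(M)\to H^{2i-1}(\tM)\oplus H^{2i-1}(Z\times[-1,1])\to H^{2i-1}(Z_+\cup Z_-)\\
\to{} & H^{2i}(M)\to H^{2i}(\tM)\oplus H^{2i}(Z\times[-1,1])\to H^{2i}(Z_+\cup Z_-)\to 0.
\end{align*}
I would then compute the alternating sum of ranks verbatim as in the proof of Lemma~\ref{lemm:3-6}, using $b_k(Z\times[-1,1])=b_k(Z)$ and $b_k(Z_+\cup Z_-)=2b_k(Z)$, to conclude
\[
b_{2i}(\tM)-b_{2i-1}(\tM)=b_{2i}(M)-b_{2i-1}(M)+b_{2i}(Z)-b_{2i-1}(Z),
\]
and then rewrite the last two terms via Lemma~\ref{lemm:3-1} to obtain the second relation.

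For the summed identity over $1\le i\le r$, the idea is that although the Mayer-Vietoris sequence cannot be broken up degree by degree in this range, Lemma~\ref{lemm:7-1} applied at $j=r$ still forces the connecting homomorphism $H^{2r}(Z_+\cup Z_-)\to H^{2r+1}(M)$ to vanish. Combined with $H^{-1}=0$ on the left, this yields a single long exact sequence
\[
0\to H^0(M)\to H^0(\tM)\oplus H^0(Z\times[-1,1])\to H^0(Z_+\cup Z_-)\to H^1(M)\to \dots \to H^{2r}(Z_+\cup Z_-)\to 0.
\]
Its alternating rank sum vanishes, which after simplification reads
\[
\sum_{k=0}^{2r}(-1)^k\bigl(b_k(M)-b_k(\tM)+b_k(Z)\bigr)=0.
\]
Separating even and odd indices, invoking $b_0(M)=b_0(\tM)=1$ (using that $e$ was chosen in a non-trivial cycle, so $M'$ and hence $\tM$ remain connected) together with $b_0(Z)=b_0(B)=1$, and then telescoping the sum $\sum_{i=1}^r(b_{2i}(Z)-b_{2i-1}(Z))=\sum_{i=1}^r(b_{2i}(B)-b_{2i-2}(B))=b_{2r}(B)-1$ via Lemma~\ref{lemm:3-1}, the identity collapses exactly to the first relation.

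The principal obstacle is essentially clerical: carefully tracking which degrees enjoy the surjectivity hypothesis of Lemma~\ref{lemm:7-1}, identifying the correct terminal zeros of the truncated long sequence, and verifying that the telescoping contribution from Lemma~\ref{lemm:3-1} together with the $b_0$ terms conspire to give precisely $b_{2r}(B)$ on the right-hand side. No genuinely new homological input is required beyond Lemmas~\ref{lemm:7-1} and~\ref{lemm:3-1}.
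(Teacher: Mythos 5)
Your proposal is correct and is essentially the argument the paper intends: Lemma~\ref{lemm:7-2} is obtained by rerunning the Mayer--Vietoris computation of Lemma~\ref{lemm:3-6} with the weaker surjectivity range of Lemma~\ref{lemm:7-1}, splitting off six-term pieces for $i\ge r+1$ and treating degrees $0$ through $2r$ as one exact sequence terminating at $H^{2r}(Z_+\cup Z_-)$, then converting $b_{2i}(Z)-b_{2i-1}(Z)$ via Lemma~\ref{lemm:3-1}. Your bookkeeping (the $b_0$ terms and the telescoped $b_{2r}(B)-1$) checks out, so no gap remains.
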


Combining Lemma~\ref{lemm:7-2} with Lemma~\ref{lemm:3-4}, Lemma~\ref{lemm:3-7} turns into the following.

\begin{lemma} \label{lemm:7-3} We have the relations
\[
\begin{split}
&b_1(M')=b_1(M)-1,\quad b_{2r}(M')=b_{2r}(M)+b_{2r-2}(B)+b_{2r}(B),\\
&b_{2i+1}(M')=b_{2i+1}(M)\quad \text{for $r\le i\le n-r-1$}.
\end{split}
\]
\end{lemma}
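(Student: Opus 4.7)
The plan is to follow the proof of Lemma~\ref{lemm:3-7}, replacing Lemma~\ref{lemm:3-6} by Lemma~\ref{lemm:7-2}. First, I would combine the second identity of Lemma~\ref{lemm:7-2} with Lemma~\ref{lemm:3-4} (which holds for every $i$) to obtain, for $i\ge r+1$,
\[
b_{2i}(M')-b_{2i-1}(M')=b_{2i}(M)-b_{2i-1}(M)+b_{2i}(B)+b_{2i-2}(B).
\]
Applying Poincar\'e duality on $M',M$ (of dimension $2n$) and on $B$ (of dimension $2n-2$) converts this into the dual relation, valid for $j\le n-r-1$:
\[
b_{2j}(M')-b_{2j+1}(M')=b_{2j}(M)-b_{2j+1}(M)+b_{2j-2}(B)+b_{2j}(B).
\]
Taking $j=0$ in the dual relation at once yields $b_1(M')=b_1(M)-1$, the first identity of the lemma.

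For the remaining two identities, introduce the shorthand $\beta_i:=b_{2i}(M')-b_{2i}(M)-b_{2i}(B)-b_{2i-2}(B)$ and $\gamma_i:=b_{2i-1}(M')-b_{2i-1}(M)$; then the displayed relation reads $\beta_i=\gamma_i$ for $i\ge r+1$ and its dual reads $\beta_j=\gamma_{j+1}$ for $j\le n-r-1$. On the overlap $r+1\le i\le n-r-1$ both apply and give $\gamma_i=\gamma_{i+1}$, so that $\gamma_{r+1}=\gamma_{r+2}=\cdots=\gamma_{n-r}$.

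To pin down this common value I would use the \emph{first} (summed) identity of Lemma~\ref{lemm:7-2} together with Lemma~\ref{lemm:3-4} summed over $i=1,\dots,r$: a direct computation, in which the $B$-terms largely cancel, reduces this combination to
\[
\sum_{i=1}^{r}(\beta_i-\gamma_i)=b_0(B)=1.
\]
Under the hypothesis $n\ge 2r+1$ (without which the third identity is vacuous) the dual relation applies at each $j=1,\dots,r$, so each $\beta_i$ on the left may be replaced by $\gamma_{i+1}$; the sum then telescopes to $\gamma_{r+1}-\gamma_1=1$. Since $\gamma_1=-1$ by the first identity, we conclude $\gamma_{r+1}=0$, hence $\gamma_j=0$ for all $r+1\le j\le n-r$, which is exactly the third identity. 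Substituting $\gamma_{r+1}=0$ into the dual relation at $j=r$ gives $\beta_r=0$, i.e.\ the second identity.

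The main obstacle, and the reason the argument of Lemma~\ref{lemm:3-7} does not transfer verbatim, is that Lemma~\ref{lemm:7-2} yields only a \emph{summed} identity in the low range $1\le i\le r$ instead of pointwise ones. This summed identity is precisely what is needed to identify the constant value of $\gamma_i$ on the central range: the pointwise relation and its Poincar\'e dual alone only force the $\gamma_i$ to be constant there, with no way of identifying the constant with zero.
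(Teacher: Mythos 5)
Your proof is correct and is precisely the elaboration the paper intends: it gives no proof of Lemma~\ref{lemm:7-3} beyond the remark that one combines Lemma~\ref{lemm:7-2} with Lemma~\ref{lemm:3-4} as in the proof of Lemma~\ref{lemm:3-7}, and your derivation (the analogue of \eqref{eq:3-5} for $i\ge r+1$, its Poincar\'e dual, and the telescoping of the summed low-degree identity) is exactly that combination. One small refinement: when $n=2r$ the second identity is still non-vacuous and follows by stopping your telescoping at $j=r-1$, which gives $\beta_r=1+\gamma_1=0$ directly, so the hypothesis $n\ge 2r+1$ is genuinely needed only for the third identity.
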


Finally, Theorem~\ref{theo:3-1} is generalized as follows.

\begin{theorem}
Let $M$ be an orientable toric origami manifold of dimension $2n$
$(n\ge 2)$ such that every face of $M/T$ of codimension $\ge r$ is
acyclic.  Then
\[
b_{2i+1}(M)=0\quad\text{for $r\le i\le n-r-1$}.
\]
Moreover, if $M'$ and $B$ are as above, then
\[
\begin{split}
&b_1(M')=b_1(M)-1\,\,(\text{hence $b_{2n-1}(M')=b_{2n-1}(M)-1$ by Poincar\'e duality}),\\
&b_{2i}(M')=b_{2i}(M)+b_{2i}(B)+b_{2i-2}(B)\quad\text{for $r\le i\le n-r$}.
\end{split}
\]
\end{theorem}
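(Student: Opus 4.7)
The plan is to mirror the proof of Theorem~\ref{theo:3-1}, replacing Lemmas~\ref{lemm:3-5}--\ref{lemm:3-7} by their generalizations~\ref{lemm:7-1}--\ref{lemm:7-3} and running induction on $b_1(M)$.

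For the induction step I would assume the odd-vanishing conclusion already holds for every orientable toric origami manifold whose orbit space has a smaller first Betti number and satisfies the codimension $\ge r$ acyclicity hypothesis. Pick an edge in a cycle of the template of $M$ and form $M'$, $B$, and $\tM$ as in Section~\ref{sectBettiNumbers}; the acyclicity hypothesis passes to $M'$ since the cut does not alter the face poset in codimensions $\ge r$. Lemma~\ref{lemm:7-3} yields $b_1(M') = b_1(M)-1$ and $b_{2i+1}(M') = b_{2i+1}(M)$ for $r \le i \le n-r-1$; the induction hypothesis then forces $b_{2i+1}(M') = 0$ in this range, whence $b_{2i+1}(M) = 0$ as well, and Poincar\'e duality supplies $b_{2n-1}(M') = b_{2n-1}(M)-1$.

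For the Betti-number identities, the $i = r$ case is contained in Lemma~\ref{lemm:7-3}. For $r+1 \le i \le n-r$, I would combine Lemma~\ref{lemm:3-4}, which is purely Mayer--Vietoris and requires no face acyclicity, with the second formula of Lemma~\ref{lemm:7-2} to obtain
\[
b_{2i}(M') - b_{2i-1}(M') = b_{2i}(M) - b_{2i-1}(M) + b_{2i}(B) + b_{2i-2}(B),
\]
and then substitute $b_{2i-1}(M) = b_{2i-1}(M') = 0$, which holds throughout $r+1 \le i \le n-r$ by what has just been established.

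The hard part will be the base case $b_1(M) = 0$: the template is a tree and $M/T$ is contractible, but the hypothesis permits non-acyclic faces of codimension $< r$, so the Holm--Pires/Ma--Pa face-ring description of $H^*(M)$ cannot be invoked directly. To dispatch it I would either iterate the cut procedure along bridges of the template tree --- the Mayer--Vietoris machinery of Lemmas~\ref{lemm:3-4} and~\ref{lemm:7-2} remains applicable even when the intermediate cut manifold is disconnected, provided the bookkeeping is adjusted --- until $M$ is reduced to a disjoint union of symplectic toric manifolds, for which odd Betti numbers vanish trivially; or alternatively invoke the orbit-type filtration spectral sequence of~\cite{ayze14}, which delivers the required middle-range vanishing under the weaker acyclicity hypothesis.
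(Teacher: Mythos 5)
Your overall strategy is exactly the paper's: the paper proves this theorem only by indicating that the Section~\ref{sectBettiNumbers} argument goes through once Lemmas~\ref{lemm:3-5}--\ref{lemm:3-7} are replaced by Lemmas~\ref{lemm:7-1}--\ref{lemm:7-3}, and your inductive step (Lemma~\ref{lemm:7-3} for the odd range and for $i=r$, then Lemma~\ref{lemm:3-4} combined with the second identity of Lemma~\ref{lemm:7-2} plus the odd-degree vanishing for $r+1\le i\le n-r$) reproduces the analogue of \eqref{eq:3-5} correctly. One small imprecision: the cut \emph{does} change the face structure in all codimensions (faces meeting the fold facet are severed, and new faces inside $F_\pm$ appear), so "does not alter the face poset in codimensions $\ge r$" is not literally true; the correct statement is that the hypothesis still passes to $M'$, because faces of $F_\pm$ are faces of a Delzant polytope, hence contractible, and an acyclic face of $M/T$ of codimension $\ge r$ is cut along a contractible face of $F$ into pieces that remain acyclic.

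The genuine flaw is your treatment of the base case, which you leave unresolved behind two sketches, and both sketches are problematic: cutting along bridges makes $M'$ disconnected, so assumption $(*)$, Lemma~\ref{lemm:3-4} and the Poincar\'e-duality bookkeeping in Lemma~\ref{lemm:3-7} would all have to be redone (the paper always cuts edges lying in cycles precisely to keep $M'$ connected), while \cite{ayze14} is formulated under the hypothesis that proper faces are acyclic, so invoking it "under the weaker acyclicity hypothesis" is unsubstantiated. But in fact there is nothing to do: when the template graph $G$ is a tree, \emph{every} face of $M/T$ is automatically contractible --- each face is obtained by gluing faces of the Delzant polytopes along faces of fold facets, following a connected subgraph of $G$, and over a tree such gluings of contractible pieces along contractible intersections stay contractible; this is exactly the statement in the Introduction that for a tree template $M/T$ and all its faces are contractible. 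Hence in the base case the weaker hypothesis coincides with the full acyclicity hypothesis and $b_{odd}(M)=0$ follows from \cite{ho-pi12} (or \cite{ma-pa06}), just as in the proof of Theorem~\ref{theo:3-1}. With that observation inserted in place of your "hard part," your argument is complete and agrees with the paper's intended proof.
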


\end{document}